\documentclass{amsart}

\usepackage{url}

\usepackage{color}
\definecolor{darkgreen}{rgb}{0,0.5,0}
\usepackage[
        colorlinks, citecolor=darkgreen,
        backref,
        pdfauthor={Jennifer S. Balakrishnan, Amnon Besser, Francesca Bianchi, J. Steffen Mueller},
        pdftitle={Explicit quadratic Chabauty over number fields}
]{hyperref}
\usepackage{amssymb, amsmath, amsthm}
\usepackage{colonequals}
\usepackage{enumitem}
\usepackage{xypic, comment,braket}
\usepackage{pdflscape}
\usepackage{afterpage, multirow}

\newcommand{\rank}{\textrm{rank}}
\newcommand{\atp}{f_{2g+1}}

\newcommand{\Q}{\mathbb{Q}}
\newcommand{\Z}{\mathbb{Z}}
\newcommand{\A}{\mathbb{A}}
\newcommand{\PP}{\mathbb{P}}

\DeclareMathOperator{\Res}{Res}
\DeclareMathOperator{\Lie}{Lie}
\newcommand{\Qp}{\Q_p}
\newcommand{\zvec}{\underline{z}}

\newcommand{\F}{\mathbb{F}}
\newcommand{\fp}{\mathfrak{p}}
\newcommand{\fq}{\mathfrak{q}}

\newcommand{\Qpb}{\bar{\Q}_p}

\newcommand{\dr}{\textup{dR}}
\newcommand{\hdr}{H_{\dr}}
\newcommand{\DD}{\mathcal{D}}

\newcommand{\OO}{\mathcal{O}}

\newcommand{\lc}{\chi^{\textup{cyc}}}
\newcommand{\cyc}{\textup{cyc}}
\newcommand{\hc}{h^{\textup{cyc}}}
\newcommand{\rc}{\rho^{\textup{cyc}}}
\newcommand{\la}{\chi^{\textup{anti}}}
\newcommand{\ha}{h^{\textup{anti}}}
\newcommand{\anti}{\textup{anti}}
\newcommand{\ra}{\rho^{\textup{anti}}}

\newcommand{\XX}{\mathcal{X}}
\newcommand{\UU}{\mathcal{U}}

\newcommand{\bom}{\bar{\omega}}
\newcommand{\eps}{\varepsilon}

\newcommand{\ord}{\operatorname{ord}}
\newcommand{\ol}[1]{\overline{#1}}

\newcommand{\Hom}{\operatorname{Hom}}
\newcommand{\Jac}{\operatorname{Jac}}

\newcommand{\Div}{\operatorname{Div}}
\newcommand{\Bil}{\mathcal{B}}
\newcommand{\Spec}{\operatorname{Spec}}
\newcommand{\supp}{\operatorname{supp}}

\newcommand{\tr}{\operatorname{tr}}
\newcommand{\rk}{\operatorname{rk}}

\newcommand{\ul}[1]{\underline{#1}}

\newlabel{gendata}{{3.10}{8}}

\newtheorem{theorem}{Theorem}[section]

\newtheorem{lemma}[theorem]{Lemma}
\newtheorem{corollary}[theorem]{Corollary}
\theoremstyle{definition}
\newtheorem{definition}[theorem]{Definition}
\newtheorem{remark}[theorem]{Remark}
\newtheorem{example}[theorem]{Example}
\newtheorem{assumption}[theorem]{Assumption}
\newtheorem{condition}[theorem]{Condition}
\newtheorem{question}[theorem]{Question}
\numberwithin{equation}{section}
\newtheorem{algorithm}{Algorithm}
\newtheorem{claim}{Claim}


\renewcommand*{\backref}[1]{}
\renewcommand*{\backrefalt}[4]{%
  \ifcase #1 %
    \relax
  \or
    $\uparrow$#2.%
  \else
    $\uparrow$#2.%
  \fi%
}


  %
  {%
    \end{oldthebibliography}%
  }

\begin{document}
\title{Explicit quadratic Chabauty over number fields}
\author[J.S. Balakrishnan]{Jennifer S. Balakrishnan}
\address{Jennifer S. Balakrishnan, Department of Mathematics and Statistics, Boston University, 111 Cummington Mall, Boston, MA 02215, USA}

\author[A. Besser]{Amnon Besser}
\address{Amnon Besser, Department of Mathematics\\Ben-Gurion University of the Negev\\P.O.B. 653\\Be'er-Sheva 84105\\Israel}

\author[F. Bianchi]{Francesca Bianchi}
\address{Francesca Bianchi,  Bernoulli Institute for Mathematics, Computer Science and Artificial Intelligence\\ University of Groningen, Groningen, The Netherlands}

\author[J.S. M\"uller]{J. Steffen M\"{u}ller}
\address{J. Steffen M\"{u}ller, Bernoulli Institute for Mathematics, 
Computer Science and Artificial Intelligence\\ University of Groningen, Groningen, The Netherlands}

\begin{abstract}
  We generalize the explicit quadratic Chabauty techniques for integral points on odd degree hyperelliptic curves
  and for rational points on genus 2 bielliptic curves to arbitrary number fields using
  restriction of scalars.
  This is achieved by combining equations coming from Siksek's extension of classical Chabauty with equations defined in terms of $p$-adic heights attached to independent continuous idele class characters.
  We give several examples to show the practicality of our methods.
\end{abstract}

\date{\today}
\subjclass[2010]{Primary 11G30; Secondary 11S80, 11Y50, 14G40}

\maketitle
\tableofcontents
\section{Introduction}\label{S:intro}

Let $K$ be a number field and let $X/K$ be a smooth projective curve of genus $g\ge 2$.
Then, by Faltings' theorem, the set of rational points $X(K)$ is finite, but at present no
general algorithm for the computation of $X(K)$ is known. 
When the Mordell-Weil rank $r$
of the Jacobian $J/K$ is less than $g$, the method of Chabauty~\cite{Chab41}, made effective by
Coleman~\cite{Col85, mccallum-poonen:method}, can often be used to compute a finite subset of $X(K_\fp)$ containing $X(K)$, for any prime $\fp$ of good reduction for $X$.
 Combined with other techniques such as the Mordell-Weil
sieve~\cite{Bruin-Stoll:MWSieve}, this suffices to compute $X(K)$ exactly in many situations.
In principle, this method can be used also to compute rational points on elliptic curves, but it only applies to the trivial case $r=0$. 

The method of Chabauty-Coleman relies on the image of the map
\begin{equation}\label{log}
  \log\colon J(K)\otimes K_\fp\to H^0(X_{K_\fp}, \Omega_1)^\vee
\end{equation}
having positive codimension, which is used to write down abelian integrals vanishing at
the rational points.
Kim~\cite{Kim05, Kim09} proposed to extend this approach to curves with $r\geq g$, by replacing the Jacobian
with non-abelian Selmer varieties and the abelian integrals with iterated Coleman integrals. 
This requires techniques from $p$-adic Hodge theory and, in general, appears to be quite difficult to use for computations of $X(K)$.
However, the simplest non-abelian instance of Kim's program has been made
explicit in various circumstances.

For now suppose that $K=\Q$.
The basic idea is to use a $p$-adic height function, which is a quadratic form
$h\colon J(\Q)\to \Q_p$. We can decompose $h$
into a sum $h = \sum_{v} h_v$ of local terms $h_v\colon
\Div^0(X_{\Q_v})\to \Q_p$, where $v$ runs through all prime numbers and $h_v(D)=0$ for all but finitely many $v$ if $D\in \Div^0(X_{\Q})$ (see~\cite{Col-Gro89,
BB:Crelle}).
If~\eqref{log} is injective, then 
we can write down a
basis $\mathcal{Q}$ of the quadratic forms on $J(\Q)\otimes \Q_p$ in terms of products of abelian
integrals, and we can write $h$ as a linear
combination $h=\sum_{q \in \mathcal{Q}} \alpha_q q$ over $\Q_p$. Hence we find
\begin{equation}\label{}
  \sum_{q \in \mathcal{Q}} \alpha_q q -  h_p = \sum_{v \ne p} h_v.
\end{equation}
The left hand side can be pulled back to $X(\Q)$ and extended to a locally analytic function
on a subset $\mathcal{S}_p\subset X(\Q_p)$. If the pull-back of the right hand side vanishes on $\prod _{v\neq p}\mathcal{S}_v \cap X(\Q)$ for some $\mathcal{S}_v\subset X(\Q_v)$ or, more generally, if we can control
the image, then  
we get a locally analytic function on $\mathcal{S}_p$ which takes values on $\prod_v\mathcal{S}_v\cap X(\Q)$ in an explicitly computable finite subset of $\Q_p$.

One context in which this idea has been made explicit is the one of integral points
on certain hyperelliptic curves.
Suppose that $X$
is hyperelliptic, given by a model $y^2=f(x)$, where $f\in \Z[x]$ is monic of odd degree
and has no repeated roots.  
It is shown
in~\cite{BBM14} that 
\begin{enumerate}
  \item the function $\tau(z)\colonequals h_p(z-\infty)$ is a Coleman function on
    $X(\Q_p)\setminus\{\infty\}$, which can
    be expressed as a double Coleman integral (\cite[Theorem~2.2]{BBM14});
  \item for $v\ne p$, the function $z \mapsto h_v(z-\infty)$ takes values on the
    $v$-integral points of $X(\Q_v)$ in an
    explicitly computable finite set $T_v\subset \Q_p$ (\cite[Proposition~3.3]{BBM14}), which is the set $\{0\}$  for almost all $v$.
\end{enumerate}
These results do not use Kim's approach directly; the first uses $p$-adic Arakelov
theory whereas the second is based on intersection theory on arithmetic surfaces.
Let $\UU=\Spec \Z[x, y] / (y^2 - f(x))$. 
Together with the discussion above, it is then easy to deduce
\begin{theorem}[{\cite[Theorem~3.1]{BBM14}}]\label{T:QC0}
  Suppose that $r=g$ and that the map in \eqref{log} is an isomorphism.
  Then there exists an explicitly computable finite set $T\subset \Q_p$  and an explicitly
  computable non-constant Coleman function $\rho\colon \UU(\Z_p) \to \Q_p$ such that
$ \rho(\UU(\Z)) \subset T$. 
\end{theorem}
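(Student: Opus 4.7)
The plan is to build $\rho$ out of the decomposition $h = h_p + \sum_{v\ne p} h_v$ of the global $p$-adic height, exploiting ingredient~(1) above (which writes $h_p(z-\infty)$ as a double Coleman integral) and the rank hypothesis (which lets us express $h$ itself as a polynomial in single Coleman integrals), so that the difference of the two is forced to be a finite sum of controllable local heights on $\UU(\Z)$.

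First, since $r=g$ and the logarithm map in \eqref{log} is an isomorphism, the space of quadratic forms on $J(\Q)\otimes\Q_p$ is spanned by the $g(g+1)/2$ products $\log_{\omega_i}\log_{\omega_j}$ with $i\le j$, where $\omega_1,\ldots,\omega_g$ is a basis of $H^0(X_{\Q_p},\Omega^1)$. Fix such a basis $\mathcal{Q}$ of quadratic forms. Then $h$ can be written uniquely as $h = \sum_{q\in\mathcal{Q}}\alpha_q\, q$ with $\alpha_q\in\Q_p$, and the $\alpha_q$ are explicitly computable: I would evaluate both sides on $g(g+1)/2$ symmetric pairs drawn from a Mordell-Weil basis of $J(\Q)$, using the known algorithms for the global $p$-adic height and for abelian integrals, and solve the resulting linear system.

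Next, I would define
\[
\rho(z) \;\colonequals\; \tau(z) - \sum_{q\in\mathcal{Q}}\alpha_q\, q(z-\infty), \qquad z\in\UU(\Z_p).
\]
By ingredient~(1), $\tau$ is a Coleman function on $X(\Q_p)\setminus\{\infty\}\supset\UU(\Z_p)$, and each $q(z-\infty)$ is a product of single Coleman integrals from $\infty$ to $z$, so $\rho$ is a Coleman function on $\UU(\Z_p)$. For $z\in\UU(\Z)\subset X(\Q)$, the choice of the $\alpha_q$ gives $\sum_q\alpha_q\, q(z-\infty) = h(z-\infty)$, and the global decomposition of $h$ yields
\[
\rho(z) \;=\; h_p(z-\infty) - h(z-\infty) \;=\; -\sum_{v\ne p}h_v(z-\infty).
\]
An integral point $z\in\UU(\Z)$ is $v$-integral for every prime $v$, so ingredient~(2) gives $h_v(z-\infty)\in T_v$ for each $v\ne p$, with $T_v=\{0\}$ for almost all $v$. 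Hence $\rho(\UU(\Z))\subset T$, where $T\colonequals\bigl\{-\sum_{v\ne p}t_v : t_v\in T_v\bigr\}$ is a finite, explicitly computable subset of $\Q_p$.

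The main obstacle I expect is proving that $\rho$ is non-constant, since the two summands in its definition are both nonzero but could in principle cancel. To rule this out I would argue that $\tau$, being a genuine iterated (double) Coleman integral in the sense of ingredient~(1), has a local expansion in a non-Weierstrass residue disk whose ``new'' contributions (coming from the primitive of the defining $1$-form that builds the double integral) cannot be matched by any $\Q_p$-linear combination of products of single Coleman integrals; expanding both sides as power series in a local uniformizer and comparing coefficients then contradicts constancy. Everything else in the argument is a formal manipulation of the height decomposition and of the height-pairing expansion afforded by the rank hypothesis.
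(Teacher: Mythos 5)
Your proposal matches the paper's own sketch exactly: the paper derives Theorem~\ref{T:QC0} by combining the identity $\sum_q \alpha_q q - h_p = \sum_{v\ne p}h_v$ (using that \eqref{log} is an isomorphism to write $h$ in terms of a basis $\mathcal{Q}$ of quadratic forms built from abelian integrals) with the two facts from~\cite{BBM14} you label (1) and (2), arriving at precisely your $\rho=\tau-\sum_q\alpha_q q$ and $T=\{-\sum_{v\ne p}t_v : t_v\in T_v\}$. The one place where your write-up is looser than the record is the non-constancy argument: ``comparing power series coefficients in a single non-Weierstrass residue disk'' does not by itself distinguish $\tau$ from a polynomial in single integrals, since both restrict to convergent power series there; the correct justification is that $\tau$ is a genuine length-two iterated integral whose defining $1$-form involves the second-kind differentials $\bar\omega_i$ (spanning the complement $W$ of $H^0(\Omega^1)$), so its image in the degree-two graded piece of the Coleman function ring is nonzero and cannot be a product of degree-one elements, which is the fact underlying~\cite[Theorem~2.2]{BBM14}.
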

Here and in the following we write that certain $p$-adic objects are {\em explicitly
computable} if we can provably compute them to any desired $p$-adic precision;
see Remark~\ref{R:prec_ana}.

The first goal of the present article is to extend Theorem~\ref{T:QC0}  to general number fields.
Fix a prime number $p$ such that $X$ has good reduction at all $\fp\mid p$ and
fix a nontrivial continuous idele class character $\chi \colon  \mathbb{A}_K^\ast/K^\ast \to \Q_p$.
This choice induces $\Q_p$-linear maps $t^\chi_{\fp}\colon K_\fp\to \Q_p$ for all $\fp \mid p$ and a
$p$-adic height pairing $h^\chi$ with values in $\Q_p$. This can be decomposed into a sum of local
height pairings and
the local heights $h_{\fp}^{\chi}$ are of the form $t_{_\fp}^{\chi}\circ \tau_{\fp}$ for some Coleman functions $\tau_{\fp}$.
Following a similar extension of classical Chabauty due to Siksek~\cite{siksek:ecnf}, we work in $X(K\otimes
\Q_p)$ rather than in $X(K_\fp)$ for a single place $\fp \mid p$, and we consider the
composition
\begin{equation}\label{log2}
  \log\colon J(K)\otimes \Q_p\to \Res_{K/\Q}(J)(\Q)\otimes \Q_p \to \Lie(\Res_{K/\Q}(J))_{\Q_p}\,.
\end{equation}
Let $\sigma\colon X(K)
\hookrightarrow X(K\otimes \Q_p)$ denote the embedding induced by the product of the completion maps $K\hookrightarrow K\otimes \Q_p=\prod_{\fp \mid p} K_{\fp}$.

\begin{theorem}\label{T:int_intro}
  Suppose that \eqref{log2} is injective. Then there exists an explicitly computable
  finite set $T^{\chi}\subset \Q_p$  and an explicitly computable non-constant locally analytic function
  $\rho^\chi \colon \UU(\OO_K\otimes\Z_p)\to \Q_p$, both dependent on $\chi$, such that $\rho^\chi(\sigma(\UU(\OO_K))) \subset T^{\chi}$.
\end{theorem}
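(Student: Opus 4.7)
The plan is to imitate the structure of the proof of \cite[Theorem~3.1]{BBM14}, replacing the Jacobian $J_{\Q}$ by its restriction of scalars $A \colonequals \Res_{K/\Q}(J)$ and working with Coleman integrals on each $X_{K_\fp}$ simultaneously, organised by a basis of $H^0(A_{\Q_p},\Omega^1) \cong \bigoplus_{\fp\mid p} H^0(X_{K_\fp},\Omega^1)$. First I would attach to $\chi$ the global $p$-adic height $h^\chi\colon J(K)\otimes \Q_p\to \Q_p$; this quadratic form decomposes as $h^\chi=\sum_v h^\chi_v$ with $h^\chi_\fp = t^\chi_\fp\circ \tau_\fp$ for Coleman functions $\tau_\fp$ on $X(K_\fp)\setminus\{\infty\}$ at $\fp\mid p$. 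Because $\Q_p$ is totally disconnected while each $K_v^\times$ for archimedean $v$ is connected, continuity of $\chi$ forces $\chi_v=0$ at every archimedean $v$, so $h^\chi_v=0$ there and only finite places contribute.

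Fix a $\Q_p$-basis $\omega_1,\dots,\omega_N$ of $H^0(A_{\Q_p},\Omega^1)$ (with $N=g[K:\Q]$) and let $\ell_1,\dots,\ell_N$ be the dual coordinates on $\Lie(A)_{\Q_p}$, so that for $z\in X(K\otimes\Q_p)\setminus\{\infty\}$ one has $\ell_i(\log[z-\infty])=\sum_{\fp\mid p}\int_\infty^{z_\fp}\omega_{i,\fp}$, a sum of Coleman integrals on the $X_{K_\fp}$. The injectivity hypothesis on \eqref{log2} identifies $J(K)\otimes \Q_p$ with a $\Q_p$-subspace of $\Lie(A)_{\Q_p}$, and the quadratic form $h^\chi$ extends (non-uniquely if the rank of $J(K)$ is less than $N$) to a quadratic form on $\Lie(A)_{\Q_p}$ of the shape $\sum_{i\leq j}\alpha_{ij}\,\ell_i\ell_j$. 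The $\alpha_{ij}\in\Q_p$ are explicitly computable by writing down and solving the linear system obtained from evaluating $h^\chi$ and the $\ell_i$ on a generating set for the image of \eqref{log2}, using known algorithms for Coleman integration over each $K_\fp$ and for the global height $h^\chi$.

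On $\UU(\OO_K\otimes \Z_p)=\prod_{\fp\mid p}\UU(\OO_{K_\fp})$ I then define
\begin{equation*}
\rho^\chi(z) \;\colonequals\; \sum_{i\leq j}\alpha_{ij}\,\ell_i(z)\,\ell_j(z) \;-\; \sum_{\fp\mid p} t^\chi_\fp\bigl(\tau_\fp(z_\fp)\bigr),
\end{equation*}
abusing notation by writing $\ell_i(z)=\sum_{\fp\mid p}\int_\infty^{z_\fp}\omega_{i,\fp}$. This is locally analytic as a $\Q_p$-linear combination of products of Coleman integrals on the $X_{K_\fp}$ and of $\Q_p$-linear images of the Coleman functions $\tau_\fp$. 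For $z\in\UU(\OO_K)$ the first sum equals $h^\chi([z-\infty])$ by construction while the second equals $\sum_{\fp\mid p}h^\chi_\fp(z-\infty)$, whence by the global height decomposition
\begin{equation*}
\rho^\chi(\sigma(z))=\sum_{\fq\nmid p}h^\chi_\fq(z-\infty),
\end{equation*}
with $\fq$ running over finite places of $K$ not above $p$. By a number-field analogue of \cite[Proposition~3.3]{BBM14}, based on arithmetic intersection theory on a regular $\OO_K$-model of $X$, each summand takes values on $\fq$-integral points in an explicitly computable finite set $T_\fq\subset\Q_p$, equal to $\{0\}$ for all but finitely many $\fq$; the set $T^\chi$ of possible finite sums is therefore itself finite and explicitly computable.

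The main obstacles are the two arithmetic input results already underlying \cite{BBM14}, now in the number-field setting: the $p$-adic Arakelov description of $h^\chi_\fp$ as $t^\chi_\fp\circ\tau_\fp$ with $\tau_\fp$ a double Coleman integral on $X_{K_\fp}$, and the finite-image statement for $h^\chi_\fq$ on $\fq$-integral points when $\fq\nmid p$. Both generalise once one works over the local fields $K_\fp$ and $K_\fq$ respectively and post-composes with the $\Q_p$-linear maps coming from $\chi$. Non-constancy of $\rho^\chi$ then follows from a genericity argument: the quadratic-integral part depends non-trivially on all $N$ Coleman-integral coordinates on $\Res_{K/\Q}(X)$ while the $\tau_\fp$ part at each $\fp$ is a single Coleman function on $X_{K_\fp}$, so for generic $\chi$ and generic choice of basis $\{\omega_i\}$ no residue disc can witness identical cancellation of the two.
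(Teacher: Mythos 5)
Your proposal is essentially the paper's own argument: the paper (Theorem~\ref{T:rhos}, from which Theorem~\ref{T:int_intro} is deduced) likewise expresses $h^\chi$ as a $\Q_p$-linear combination of products of the $\log$-functionals $f_i$ (your $\ell_i$), uses $h^\chi_{\fp_j}=t^\chi_j\circ\tau_j$ from \cite[Theorem~2.2]{BBM14}, and reduces $\rho^\chi(\sigma(Q))$ to $-\sum_{\fq\nmid p}h^\chi_\fq(Q)$, whose finiteness on $\fq$-integral points follows from the intersection-theoretic argument in \cite[Proposition~3.3]{BBM14}. Your repackaging via $\Lie(\Res_{K/\Q}J)_{\Q_p}$ and a (non-unique) extension of the quadratic form, and your sign convention, are cosmetic differences; the only soft spot is the final genericity appeal for non-constancy, which should rather rest on $\tau_\fp$ being a genuine double Coleman integral not expressible as a product of single integrals, but the paper is equally terse on this point.
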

\begin{remark}
One can obtain a slight practical improvement of the theorem by noting that the sets $T^{\chi}$ arise, just as in~\cite{BBM14}, from possible values of the local components $h^\chi_v$, and these values are highly dependent for varying $\chi$ because they are just the products of a certain intersection pairing with a constant depending on $\chi_v$ (see~\eqref{away_disjoint}). Thus, picking a basis $\{\chi_i\}$ for the space of idele class characters one may prove that the vector valued function $ \ul{\rho}=(\rho^{\chi_1},\ldots)$ takes on $\sigma(\UU(\OO_K))$ a finite computable set of possible values, which is smaller than the obvious product of the $T^{\chi_i}$. For simplicity we ignore this point here. 
\end{remark}

See Theorem~\ref{T:rhos} below and its proof.
Note that both Theorem~\ref{T:QC0} and Theorem~\ref{T:int_intro} hold true when $g=1$.

The method of \cite{BBM14} was generalized by the first author and Dogra in \cite{BD18} to rational points on
smooth projective curves, satisfying some additional conditions. In contrast to the proof
of Theorem~\ref{T:QC0} in~\cite{BBM14}, this generalization uses Kim's approach directly, relating certain Selmer varieties to $p$-adic heights as constructed by
Nekov\'a\v r ~\cite{Nek93}, via $p$-adic Hodge theory.

In a special case, this was turned into an explicit algorithm to compute a finite set of $p$-adic points containing the rational points of $X$. 
In particular, let $X$ be a bielliptic curve of genus $2$ over a number field $K$. Then
$X$ admits degree $2$ maps $\varphi_1,\varphi_2$ to two elliptic curves $E_1$ and $E_2$, respectively. 

\begin{theorem}[{\cite[Theorem~1.4]{BD18}}]
\label{thm:BDQC1}
Suppose that $K$ is $\Q$ or an imaginary quadratic field in which $p$ splits, and that
  $E_1$ and $E_2$ each have rank $1$ over $K$; let $\fp\mid p$. Then there exist a nontrivial continuous idele class character $\chi$, an explicitly computable finite set $T^{\chi}\subset \Q_p$ and an explicitly computable non-constant Coleman function $\rho^{\chi}\colon X(K_{\fp})\to\Q_p$, such that $\rho^{\chi}(X(K))\subset T^{\chi}$.
\end{theorem}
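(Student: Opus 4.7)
The plan is to apply the quadratic Chabauty machinery of Balakrishnan--Dogra, adapted to the bielliptic setting. The maps $\varphi_1, \varphi_2$ induce an isogeny $J \to E_1 \times E_2$ over $K$, so $J(K) \otimes \Q_p$ is two-dimensional, generated modulo torsion by the pullbacks $Q_i \colonequals \varphi_i^\ast P_i$ of generators $P_i \in E_i(K)$. Since $\rk J(K) = 2 = g$, classical Chabauty--Coleman fails, but the space of $\Q_p$-valued quadratic forms on $J(K) \otimes \Q_p$ has dimension three, leaving room for one extra equation beyond the linear ones cut out by (a variant of) Siksek's method.

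For the character, I would take $\chi = \chi^{\cyc}$ when $K = \Q$. When $K$ is imaginary quadratic with $p$ split, a cyclotomic character only reproduces Siksek-style linear Chabauty; the hypothesis that $p$ splits in $K$ guarantees the existence of a nontrivial anti-cyclotomic idele class character $\chi = \chi^{\anti}$ whose local components at the two primes of $K$ above $p$ can be prescribed independently, and I would use this instead. The associated $p$-adic height pairing decomposes as $h^\chi = \sum_v h^\chi_v$, with $h^\chi_v$ identically zero for all but finitely many $v$.

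The next step is to express the quadratic form $P \mapsto h^\chi(P - P_0, P - P_0)$ on $J(K)$, for a fixed basepoint $P_0 \in X(K)$, as a fixed $\Q_p$-linear combination of the three basic products of single Coleman integrals pulled back from $J$ to $X$ via $P \mapsto P - P_0$. The coefficients $\alpha^\chi_q$ are uniquely determined by evaluating $h^\chi$ on the generators $Q_1, Q_2$, which is feasible because we have explicit generators of each $E_i(K)$. Rearranging gives
\begin{equation*}
\rho^\chi(z) \colonequals h^\chi_\fp(z - P_0) - \sum_{q \in \mathcal{Q}} \alpha^\chi_q \, q(z),
\end{equation*}
a Coleman function on $X(K_\fp)$ --- using Nekov\'a\v r's construction together with $p$-adic Hodge theory to identify $h^\chi_\fp$ as a double Coleman integral --- which on $X(K)$ equals $-\sum_{v \ne \fp} h^\chi_v(z - P_0)$. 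By arithmetic intersection theory on regular models of $X$ over $\OO_{K_v}$, each $h^\chi_v(z - P_0)$ takes values in an explicitly computable finite subset of $\Q_p$ and vanishes outside a finite set of $v$; collecting these produces the finite set $T^\chi$.

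The main obstacle is to guarantee that $\rho^\chi$ is non-constant: a priori the linear combination defining $\rho^\chi$ could collapse to a constant, yielding a tautology. This is where the bielliptic structure and the choice of $\chi$ interact non-trivially. In the imaginary quadratic case, the subtle point is that the cyclotomic component of the height is already captured by the Coleman integrals appearing in Siksek's equations, so the anti-cyclotomic part must contribute something genuinely new --- equivalently, a certain $2 \times 2$ $p$-adic regulator matrix built from $h^\chi$ and the abelian integrals at $Q_1, Q_2$ must be non-degenerate. This non-degeneracy is the genuinely arithmetic input that forces the hypotheses on $K$, $p$, and the ranks of $E_1, E_2$.
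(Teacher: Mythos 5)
The overall skeleton of your argument is in the right direction --- decompose the $p$-adic height $h^\chi$ into local terms, express $h^\chi$ as a $\Q_p$-linear combination of products of abelian integrals using the rank-$2$ Mordell--Weil group coming from $E_1\times E_2$, and rearrange to get a Coleman function on $X(K_\fp)$ whose values on $X(K)$ are controlled by local heights away from $\fp$. This matches the spirit of the proof in \cite{BD18}, which indeed relies on Nekov\'a\v r's height and $p$-adic Hodge theory.

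However, there is a genuine gap at the final step. You claim that "by arithmetic intersection theory on regular models of $X$ over $\OO_{K_v}$, each $h^\chi_v(z-P_0)$ takes values in an explicitly computable finite subset of $\Q_p$ and vanishes outside a finite set of $v$." This is correct for \emph{integral} points (which is the situation in \cite{BBM14} and Theorem~\ref{T:QC0}), but it is \emph{false} for rational points: when the $v$-adic valuation of $x(z)$ is negative, the intersection multiplicity $\DD_z^2$ (equivalently, the quantity $\chi_v(x(z))$ that appears in the local elliptic-curve height) grows without bound as $z$ varies over $X(K)$, so the set of values of an individual $h^\chi_v$ is infinite. The whole point of the bielliptic hypothesis in \cite{BD18} --- and in Theorem~\ref{thm:bielliptic} of this paper --- is that a carefully chosen combination of local heights on $E_1$ and $E_2$, assembled via the quasi-parallelogram law \eqref{eq:quasi_par}, cancels these unbounded contributions, leaving a quantity that takes values in a finite explicitly computable set (compare the case analysis around \eqref{eq:height_non_int}). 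Your sketch omits this cancellation entirely; without it the set $T^\chi$ as you have constructed it is not finite, and the theorem does not follow.

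Two smaller inaccuracies concerning the choice of character. First, the anticyclotomic character exists for any imaginary quadratic $K$ (see Example~\ref{ex:anti} and Corollary~\ref{C:leo}); the splitting of $p$ is not what produces it, but rather what gives each completion above $p$ the shape $K_{\fp_j}\cong\Q_p$, simplifying the local analysis. Second, and more importantly, for a function defined on $X(K_{\fp})$ for a \emph{single} $\fp\mid p$ one must control the local height at the other prime $\fp'\mid p$ as well; a pure anticyclotomic $\chi=\la$ is ramified at both $\fp$ and $\fp'$, so $h^\chi_{\fp'}$ is itself a genuine Coleman function and cannot be lumped into $T^\chi$. The correct idea is to pick $\chi$ unramified at $\fp'$ (for instance a suitable $\Q_p$-linear combination of $\lc$ and $\la$, which is possible precisely because $t^\cyc$ and $t^\anti$ are independent in $(K\otimes\Q_p)^\vee$ when $p$ splits); then $h^\chi_{\fp'}$ is given by intersection theory and can be folded into the away-from-$\fp$ contribution --- again modulo the boundedness issue above. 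Your heuristic that "the cyclotomic character only reproduces Siksek-style linear Chabauty" is not accurate: the cyclotomic height is a genuinely quadratic invariant; the real reason $\lc$ alone is insufficient here is the uncontrolled local term at $\fp'$.
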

This explicit result from \cite{BD18} can alternatively be proved only from
properties of the local heights on the elliptic curves $E_i$ and Theorem \ref{T:QC0} for
each $E_i$. Therefore, our approach to Theorem \ref{T:int_intro} can be used to extend this elementary proof of Theorem \ref{thm:BDQC1} to general number fields $K$.
\begin{theorem}\label{biell_intro}
Suppose that \eqref{log2} is an injection for each $E_i$; let $\mathcal{Q}_i$ denote a set of $\Q_p$-valued functions on $E_i(K\otimes \Q_p)$ which restricts to a basis of quadratic forms on $E_i(K)$. For every nontrivial continuous idele class character $\chi$, there exist explicitly computable constants $\alpha_{q_1}^{\chi},\beta_{q_2}^{\chi}\in\Q_p$ and an explicitly computable finite set $T^{\chi}\subset \Q_p$ such that the function
\begin{equation*}
\rho^{\chi}\colon \tilde{X}(K\otimes \Q_p)\to \Q_p
\end{equation*}
defined by
\begin{align*}
\rho^{\chi}(\zvec)\colonequals &\sum_{\fp\mid p}(h_{\fp}^{\chi}(\varphi_1(z_{\fp}))-h_{\fp}^{\chi}(\varphi_2(z_{\fp}))-2\chi_{\fp}(x(z_{\fp})))\\
- &\sum_{q_1 \in \mathcal{Q}_1}\alpha_{q_1}^\chi q_1(\varphi_1(\zvec))+\sum_{q_2 \in \mathcal{Q}_2}\beta_{q_2}^\chi q_2(\varphi_2(\zvec))
\end{align*}
satisfies
\begin{equation*}
\rho^{\chi}(\sigma(\tilde{X}(K)))\subset T^{\chi};
\end{equation*}
here $\tilde{X}(K\otimes \Q_p)$ is the subset of $X(K\otimes \Q_p)$ where $\rho^{\chi}$ is well-defined and $\sigma(\tilde{X}(K))$ is its intersection with $\sigma(X(K))$.
\end{theorem}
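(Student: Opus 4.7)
The strategy is to apply Theorem~\ref{T:int_intro} to each of the two elliptic quotients $E_1, E_2$ and combine the resulting identities, with the correction $-2\chi_\fp(x(z_\fp))$ produced by the product formula for $\chi$ applied to the coordinate function $x(z)\in K^\times$, together with a projection-formula argument for the bielliptic covers $\varphi_i\colon X\to E_i$.

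Concretely, the injectivity of~\eqref{log2} for $E_i$ means Theorem~\ref{T:int_intro} applies to an affine Weierstrass model $\UU_i$ of $E_i$. Unpacking that theorem gives, for every $P \in E_i(\OO_K)$ distinct from the origin $O_i$, an identity of the form
\[
\sum_{\fp \mid p} h_\fp^\chi(P - O_i) \;-\; \sum_{q_i \in \mathcal{Q}_i} c_{q_i}^{\chi}\, q_i(P) \;\in\; T_i^\chi\,,
\]
where $T_i^\chi \subset \Q_p$ is an explicitly computable finite set capturing the possible values of $-\sum_{v\nmid p}h_v^\chi(P-O_i)$ on integral points, and where the $c_{q_i}^\chi\in\Q_p$ are the coefficients that express the global $p$-adic height $h^\chi$ on $E_i$ as a $\Q_p$-linear combination of the quadratic forms in $\mathcal{Q}_i$.

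Set $\alpha_{q_1}^\chi := c_{q_1}^\chi$ and $\beta_{q_2}^\chi := c_{q_2}^\chi$; apply the identities to $P=\varphi_i(z)$ for $z \in X(K)$ with $\varphi_i(z)\neq O_i$, and subtract the $i=2$ identity from the $i=1$ identity. This reproduces $\rho^\chi(\sigma(z))$ up to the correction $-2\sum_{\fp\mid p}\chi_\fp(x(z_\fp))$. By the product formula for $\chi$ applied to $x(z)\in K^\times$,
\[
-2\sum_{\fp\mid p}\chi_\fp(x(z_\fp)) \;=\; 2\sum_{v\nmid p}\chi_v(x(z))\,,
\]
and each term on the right is explicitly computable and takes only finitely many values as $z$ ranges over $X(K)$: at all but a fixed finite set of $v$ (depending only on the model), $x(z)\in\OO_{K,v}^\times$ and $\chi_v(x(z))=0$. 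Absorbing these finitely many values into the sets $T_1^\chi$ and $T_2^\chi$ gives the required explicit finite set $T^\chi$.

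The main technical obstacle is to ensure that at the finitely many places $v\nmid p$ where $\varphi_i(z)$ fails to be $v$-integral on $\UU_i$ (a subset of the places where $x(z)$ has a pole or zero), the local height $h_v^\chi(\varphi_i(z)-O_i)$ still lies in an explicit finite set after combining with the contribution $2\chi_v(x(z))$ from the product formula. One handles this by pulling back along $\varphi_i$ via the divisor identity $\varphi_i^*(\varphi_i(z)-O_i)=z+\iota_i(z)-\varphi_i^*(O_i)$ on $X$ (where $\iota_i$ is the involution defining the quotient $\varphi_i$) and invoking the projection formula for local heights; the non-integrality contribution from $\varphi_i(z)$ is then identified with a term of the form $\chi_v(x(z))$ times an explicit rational constant, cancelling the product-formula correction up to a finite remainder. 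This local analysis is an adaptation of~\cite[Section~3]{BBM14}, and once in place the explicit finite set $T^\chi$ is assembled by combining the finitely many possibilities at each bad place.
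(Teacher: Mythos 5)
Your proposal has the right skeleton and correctly identifies the constants: expand $h^{\chi,E_i}=\sum_v h_v^{\chi,E_i}$, choose $\alpha^\chi_{q_1},\beta^\chi_{q_2}$ so that $\sum_{q_1}\alpha^\chi_{q_1}q_1=h^{\chi,E_1}$ and $\sum_{q_2}\beta^\chi_{q_2}q_2=h^{\chi,E_2}$, subtract, and use the product formula $\sum_v\chi_v(x(z))=0$ to move the $\chi_\fp(x(z))$ terms away from $p$. This yields
\[
\rho^\chi(\sigma(z))=\sum_{\fq\nmid p}\bigl(-h_\fq^{\chi,E_1}(\varphi_1(z))+h_\fq^{\chi,E_2}(\varphi_2(z))+2\chi_\fq(x(z))\bigr),
\]
and all that remains is to show each summand takes explicitly computable values in a finite set and vanishes for almost all $\fq$. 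This matches the structure of the paper's proof of Theorem~\ref{thm:bielliptic}.

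The gap is in how you handle those summands. Your intermediate claim that ``$\chi_v(x(z))$ takes only finitely many values as $z$ ranges over $X(K)$'' is false as stated -- $\ord_v(x(z))$ is unbounded over $X(K)$ -- and the proposed rescue via the divisor pullback $\varphi_i^*(\varphi_i(z)-O_i)$ and a ``projection formula for local heights'' is neither carried out nor what the argument actually requires. You also cannot directly invoke Theorem~\ref{T:int_intro} for $E_i$, since that theorem only constrains $\rho^\chi$ at $\OO_K$-integral points of an affine patch, whereas $\varphi_i(z)$ for $z\in X(K)$ is typically non-integral at the very places you must control. The step your proof actually needs is the explicit local-height formula for an elliptic curve: $h_\fq^{\chi,E_\ell}(P)=\chi_\fq(x(P))$ whenever $x(P)\notin\OO_\fq$ (equation \eqref{eq:height_non_int} in the paper). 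Combined with $x(\varphi_1(z))=x(z)^2$ and $x(\varphi_2(z))=a_0x(z)^{-2}$, this gives exact cancellation: if $\ord_\fq(x(z))<0$ then $-h_\fq^{\chi,E_1}(\varphi_1(z))+2\chi_\fq(x(z))=0$ while $\varphi_2(z)$ is integral; if $2\ord_\fq(x(z))>\ord_\fq(a_0)$ then $h_\fq^{\chi,E_2}(\varphi_2(z))+2\chi_\fq(x(z))=\chi_\fq(a_0)$ while $\varphi_1(z)$ is integral; otherwise $\ord_\fq(x(z))$ ranges over a finite set. In every case the remaining local height at an integral point lies in the finite set $W_\fq^{\chi,E_\ell}$ of Theorem~\ref{T:rhos}. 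The paper's proof of Theorem~\ref{thm:bielliptic} additionally uses the quasi-parallelogram law \eqref{eq:quasi_par} from \cite[Lemma~7.4]{BD18} because of the $Q_k$-translated formula there; for the version you are proving that can be avoided, but \eqref{eq:height_non_int} cannot, and it is absent from your argument.
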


See Theorem~\ref{thm:bielliptic} for a more precise formulation. If we want to use Theorem~\ref{T:int_intro} (respectively Theorem~\ref{biell_intro}) to actually compute 
integral (respectively rational) points, then we need enough functions $\rho^{\chi}$ so
that their common zero set is finite. 
In order to achieve this, we require at least $[K:\Q] = \dim\Res_{K/\Q}X$ such functions.

Over the rational numbers, the space of continuous $\Q_p$-valued idele class characters has
dimension 1, so up to a scalar factor, Theorem~\ref{T:int_intro} (respectively Theorem~\ref{biell_intro}) only leads to one locally analytic
function which vanishes at the integral (respectively rational) points. 
In general, the dimension of this space is at least $r_2+1$, where $r_2$ is the number of
conjugate pairs of non-real embeddings of $K$ into $\mathbb{C}$ (with equality if Leopoldt's
conjecture holds). Hence we can expect $r_2+1$ independent functions.
Combining our functions with Siksek's work, we expect that generically, we get a method
to compute integral or rational points when
\begin{equation}\label{ranks_fin}
  r +\rk(\OO_K^\times) \le [K:\Q]\cdot g\,.
\end{equation}
As in Siksek's work, our approach will usually fail if $X$ can be defined over a subfield $F$ of $K$ and if  
\[
  \rk (\Jac(X)/F) +\rk(\OO_F^{\times}) > [F:\Q]\cdot g.
\]

It would be
interesting to investigate conditions which guarantee that our functions cut out a finite
set. See recent
work of Dogra~\cite{Dog19} and of Hast~\cite{Has19} for discussions of finiteness issues related to ours, but
working directly with Kim's approach.

However, the focus of this article is on developing methods for {\em computing} integral or
rational points, not on theoretical conditions guaranteeing finiteness. 
To illustrate the practicality of our method, we give several examples \footnote{
We focus on examples over quadratic fields because some of the restrictions of our method
are much easier to satisfy for such fields than for a higher degree number field, see
Remark~\ref{R:only_quadratic}.}
, combining our
techniques with the Mordell-Weil sieve following~\cite{BBM16} and~\cite[Appendix~B]{BD18}.
In particular, we compute 
\begin{itemize}
  \item the $\OO_{\Q(\sqrt{-3})}$-integral points on the genus 2 curve 
    defined by $y^2 = x^5 - x^4 + x^3 + x^2 - 2x + 1$, see Example~\ref{int_ex}. Its Jacobian
    is an optimal quotient of $J_0(188)$ and has rank~2 over $\Q$ and rank~4 over
    $\Q(\sqrt{-3})$. 
  \item the $\Q(i)$-rational points on the bielliptic modular curve $X_0(91)^+$, see
    Example~\ref{91_ex}. Note that Theorem \ref{thm:BDQC1} does not apply here, since the
    Mordell-Weil rank of the Jacobian of $X_0(91)^+$ over $\Q(i)$ is $4$. Its rank over
    $\Q$ is~2.
  \item the $\Q(\sqrt{34})$-rational points on the bielliptic curve $X\colon
    y^2=x^6+x^2+1$, see Example~\ref{Wetherell}. Inspired by a problem in Diophantus'
    {\em Arithmetica}, Wetherell determined the rational points on $X$ in his
    thesis~\cite{Wet97}. The rank of the Jacobian of $X$ over $\Q$ is 2 and the rank over $\Q(\sqrt{34})$ is~3.
\end{itemize}
It is immediate that Siksek's method does not apply to these examples because the
respective ranks are too large. They might be amenable to an approach via elliptic curve
Chabauty as in \cite{Bru03} (the relevant Galois group in the first example is of order~10), but to our knowledge the only
existing implementations of this method require the base field to be~$\Q$. 
We are also not aware of any method for computing integral points on hyperelliptic curves
over number fields that could have succeeded in the first example.

The outline of the paper is as follows: After setting some notation, we give an explicit
description of continuous idele class characters and the $p$-adic heights associated to
them in Section~\ref{S:hts}. We then recall Siksek's extension of Chabauty-Coleman in
Section~\ref{S:cnf}. In Section~\ref{S:qcnf} we extend the quadratic Chabauty techniques for
integral points on odd degree hyperelliptic curves
introduced in~\cite{BBM14} and prove Theorem~\ref{T:int_intro}. Similarly,
Section~\ref{sec:rat} is devoted to an extension of the explicit methods for bielliptic
curves of genus~2 from~\cite{BD18} and contains a proof of Theorem~\ref{biell_intro}.
We then turn to algorithmic details of these methods and give several examples for curves
of small genus defined over quadratic fields in
Sections~\ref{S:alg_ex_int} and~\ref{S:examples_rational}.
Finally, we discuss how we can provably find the solutions of the power series
equations resulting from our methods in Appendix~\ref{appendix:roots_multi}.

It would be interesting to extend the explicit results in~\cite{BD17} and~\cite{BDMTV} to
general number fields. We have not attempted this, but rather focused on extending those 
quadratic Chabauty results which do not require $p$-adic Hodge theory.

\section*{Acknowledgements}
We thank Keith Conrad for useful correspondence about his notes \cite{conrad:multihensel}
and for consequently including a power series version of the multivariate Hensel's Lemma, which was needed for
this work. 
We also thank the referee for several helpful remarks.
Part of the work described in this article was carried out during a visit of the first and the
fourth author to Ben-Gurion University. We would like to thank the Center for
Advanced Studies in Mathematics at Ben-Gurion University for their hospitality and support. 
The first author is supported in part by NSF grant DMS-1702196, the Clare Boothe Luce Professorship (Henry Luce Foundation), and
Simons Foundation grant \#550023. The third author was supported by EPSRC and by Balliol College through a Balliol Dervorguilla scholarship.
The third and fourth author acknowledge support from NWO through a VIDI grant.

\subsection{Notation}\label{S:notn}
We fix, once and for all, the following notation.
\[\begin{array}{ll}
    K & \textrm{a number field}\,,\\
  \OO_K& \textrm{the ring of integers of }K\,,\\
     d& \textrm{the extension degree }[K:\Q]\,,\\
    (r_1,r_2)&\textrm{the signature of }K\,,\\
         r_K & \textrm{the rank }  r_1+r_2 -1 \textrm{ of the unit group }\OO_K^\ast\,,\\
                     h_K & \textrm{the class number of }K\,,\\
                     K_v & \textrm{the completion of }K\textrm{ at a place }v\,,\\
                       p & \textrm{an odd prime number, unramified in }K\,,\\
    \fp_1,\ldots,\fp_m\subset \OO_K & \textrm{the primes of }\OO_K\textrm{ lying above } p\,,\\
  \sigma_j\colon K \hookrightarrow K_j \colonequals K_{\fp_j} & \textrm{the completion of }K\textrm{ at }\fp_j\,,\\
   \OO_j            & \textrm{the ring of integers of }K_j\,,\\
    \xi_\fq & \textrm{a generator of } \fq^{h_K} \textrm{ if } \fq\subset K \textrm{
  is a prime ideal}\,.\\
(K\otimes \Q_p)^\vee  & \prod^m_{j=1}\Hom_{\Q_p}(K_{j},\Q_p).
\end{array}\]

\section{$p$-adic heights}\label{S:hts}
\subsection{Continuous idele class characters}\label{S:icc}
In this section, we recall continuous idele class characters 
 \begin{equation*}
   \chi = \sum_v \chi_v \colon  \mathbb{A}_K^\ast/K^\ast \to \Q_p\;
 \end{equation*}
 and discuss how to construct them explicitly.
These characters are of great importance in Iwasawa theory, as they correspond to
$\Z_p$-extensions of $K$. We shall not use this interpretation; 
for us, the main application is the explicit construction of $p$-adic
heights in \S\ref{S:padic_hts}.

First we note the following:
\begin{itemize}
\item For any prime $\fq\nmid p$ we have $\chi_\fq(\OO_{K_\fq}^\ast)=0$ because of continuity.
  So if $\pi_\fq$ is a uniformizer in $K_\fq$, then $\chi_\fq$ is completely
  determined by $\chi_\fq(\pi_\fq)$. Equivalently,
  we can use $\chi_\fq(q)$, where $q$ is the underlying prime number, or $\chi_{\fq}(\xi_\fq)$.
\item For any $j \in \{1,\ldots,m\}$, there is a $\Q_p$-linear map $t^{\chi}_j$ such that we can decompose
  \begin{equation}\label{tdefnd}
    \xymatrix{
      {\OO_{j}^\ast}  \ar[rr]^{\chi_{\fp_j}} \ar[dr]^{\log_j} & &   \Q_p,\\
      & K_j\ar[ur]^{t^{\chi}_j}
  }
  \end{equation}
  because $\chi_{\fp_j}$ takes values in the torsion-free group $(\Q_p,+)$. We call the
    $t^{\chi}_j$ {\em trace maps}.
\end{itemize}

 \begin{remark}\label{ramified}
If a continuous idele class character $\chi$
   is ramified at $\fp_j$, that is, if the local
 character $\chi_{\fp_j}$ does not vanish on  $\OO^\ast_j$,
 then we can extend $\log_j$ to
  \begin{equation}\label{logbranch}
  \log_j  \colon  K_j^\ast \to K_j
\end{equation}
 in such a way that the diagram~\eqref{tdefnd} remains commutative.
 \end{remark}
 
By the above, a continuous idele class character of $K$ determines an element $t^\chi$ of
$(K\otimes\Q_p)^\vee$. We now show that the converse also holds.

\begin{lemma}\label{L:chi_t}
  A continuous idele class character $\chi \colon  \mathbb{A}_K^\ast/K^\ast \to \Q_p$ is
  uniquely determined by 
  \[
    t^\chi = (t^{\chi}_{1},\ldots,t^{\chi}_{m}) \in (K\otimes \Q_p)^\vee  
\] 
  where the $t^{\chi}_j$ are as in~\eqref{tdefnd}.
\end{lemma}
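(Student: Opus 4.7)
The plan is to pin down every local component $\chi_v$ from $t^\chi$, using continuity together with the defining vanishing $\chi|_{K^\ast}=0$. First I would isolate where the unknowns sit. At every archimedean place $v$, the connected identity component of $K_v^\ast$ maps to zero in the totally disconnected group $\Q_p$, and the component group (trivial or $\{\pm 1\}$) maps trivially to the torsion-free $\Q_p$, so $\chi_v\equiv 0$. At a finite place $v\nmid p$ of residue characteristic $\ell\neq p$, continuity forces $\chi_v|_{\OO_{K_v}^\ast}=0$: that unit group is, topologically, an extension of a finite group of order coprime to $p$ by a free $\Z_\ell$-module, and any continuous map $\Z_\ell\to\Q_p$ is zero (its image is a compact subgroup of $\Q_p$, hence $p^n\Z_p$ or $\{0\}$, and the former is impossible because $\Z_\ell$ is uniquely $p$-divisible while $p^n\Z_p$ is not). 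Hence $\chi_v$ is determined by one scalar $\chi_v(\pi_v)$ on a uniformizer. At a place $v=\fp_j\mid p$, diagram~\eqref{tdefnd} already encodes $\chi_{\fp_j}|_{\OO_j^\ast}$ as $t^\chi_j\circ\log_j$, so here too the only outstanding unknown is $\chi_{\fp_j}(\pi_{\fp_j})$.

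To fix these unknowns I would evaluate $\chi$ at the principal idele $\xi_\fq\in K^\ast$ for each finite prime $\fq$ of $K$. All archimedean contributions vanish, and at every finite place $w\nmid p$ with $w\neq\fq$ the local image $\sigma_w(\xi_\fq)$ lies in $\OO_{K_w}^\ast$, so those contributions also vanish. The identity $0=\chi(\xi_\fq)$ thus reduces to a relation involving only the values $\chi_{\fp_i}(\sigma_i(\xi_\fq))$ for $i=1,\ldots,m$, together with the additional value $\chi_\fq(\sigma_\fq(\xi_\fq))$ when $\fq\nmid p$. For every $i$ with $\fp_i\neq\fq$ one has $\sigma_i(\xi_\fq)\in\OO_i^\ast$, and diagram~\eqref{tdefnd} writes that contribution explicitly as $t^\chi_i(\log_i(\sigma_i(\xi_\fq)))$. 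When $\fq\nmid p$, the remaining term is $\chi_\fq(\sigma_\fq(\xi_\fq))=h_K\chi_\fq(\pi_\fq)$, and the equation solves for $\chi_\fq(\pi_\fq)$ purely in terms of $t^\chi$. When $\fq=\fp_j$, writing $\sigma_j(\xi_{\fp_j})=u_j\pi_{\fp_j}^{h_K}$ with $u_j\in\OO_j^\ast$, the $\fp_j$-contribution becomes $t^\chi_j(\log_j(u_j))+h_K\chi_{\fp_j}(\pi_{\fp_j})$, and one solves for $\chi_{\fp_j}(\pi_{\fp_j})$. In either case uniqueness is immediate because $h_K\in\Z$ is nonzero and hence invertible in $\Q_p$.

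The argument is essentially bookkeeping once the continuity reductions are in place; the one step with real content is the claim that $\OO_{K_v}^\ast$ admits no nonzero continuous $\Q_p$-valued character for finite $v\nmid p$, which crucially uses that the residue characteristic differs from $p$. Everything else is linear algebra against the product formula for $\chi$ applied to the specific global elements $\xi_\fq$.
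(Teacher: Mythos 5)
Your proposal is correct and takes essentially the same approach as the paper's proof: both pin down each $\chi_{\fp_j}(\sigma_j(\xi_{\fp_j}))$ and each $\chi_\fq(\sigma_\fq(\xi_\fq))$ by applying the product formula $\chi(\xi_\fq)=0$ to the generators $\xi_\fq$ of $\fq^{h_K}$ and observing that all other local contributions vanish. The only difference is presentational: the paper records the vanishing at archimedean places and at unit groups away from $p$ as standing observations in the preamble to the lemma, whereas you re-derive them inside the proof (your argument that a continuous homomorphism $\Z_\ell\to\Q_p$ is zero is exactly the content behind the paper's assertion that $\chi_\fq(\OO_{K_\fq}^\ast)=0$ by continuity); you also make explicit the step of dividing by $h_K$, which the paper leaves implicit.
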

\begin{proof}
  We first show that $t^\chi$ determines $\chi_{\fp_j}$ for $1 \le j \le m$.
  Indeed, 
  since $\sigma_i(\xi_{\fp_j})$ is a unit in $\OO_{i}$ for $i \ne j$, we have
  \[
    \chi_{\fp_j}(\sigma_j(\xi_{\fp_j})) = -\sum_{i\ne
    j}\chi_{\fp_i}(\sigma_i(\xi_{\fp_j})) =
    -\sum_{i\ne j}t^{\chi}_{i}(\log_i(\sigma_i(\xi_{\fp_j})))\,,
  \]
  so $\chi_{\fp_j}$ is completely determined by $t^{\chi}$.

  Now let $\fq$ be a prime not dividing $p$. 
  Then $\chi_{\fq}$ is determined by its value on $\xi_\fq$ (embedded into $K_\fq$ via $\sigma_{\fq}$) and
  vanishes on $\OO^\ast_\fq$.
  Hence 
$\chi_{\fq}$ is completely determined by $t^{\chi}$, since
\begin{equation*}
    \chi_\fq(\sigma_\fq(\xi_\fq)) = -\sum^m_{i=1}\chi_{\fp_i}(\sigma_i(\xi_\fq)) =
    -\sum^m_{i=1}t^{\chi}_{i}(\log_i(\sigma_i(\xi_\fq)))\,.\qedhere 
\end{equation*}
  \end{proof}
More generally, the proof of Lemma~\ref{L:chi_t} shows that every $t \in (K\otimes \Q_p)^\vee $ determines a character $\chi\colon
\mathbb{A}_K^\ast \to \Q_p$ via~\eqref{tdefnd}.  
We now investigate which $t \in (K\otimes \Q_p)^\vee $ give rise to a continuous idele class
character.
\begin{lemma}\label{L:icc_algo}
  A homomorphism
\[
  t = (t_{1},\ldots,t_{m}) \in (K\otimes\Q_p)^\vee
\]
gives rise to a continuous idele class character $\chi$ if and only if
\begin{equation}\label{E:unit_eqns}
  t_{1}(\log_1(\sigma_1(\eps)))+\ldots+ t_{m}(\log_m(\sigma_m(\eps)))=0
\end{equation}
  for all $\eps\in \OO_K^\ast$.
\end{lemma}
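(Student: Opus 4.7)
The plan is to split the biconditional into its two directions. For necessity, I would take $\eps\in\OO_K^\ast$, regard it as a principal idele, and use $\chi(\eps)=0$. Decomposing $\chi(\eps)=\sum_v\chi_v(\sigma_v(\eps))$, each contribution at $v=\fq\nmid p$ vanishes because $\sigma_\fq(\eps)\in\OO_\fq^\ast$ and continuity forces $\chi_\fq$ to be trivial on $\OO_\fq^\ast$, while the contribution at $v=\fp_j$ reduces to $t_j^\chi(\log_j(\sigma_j(\eps)))$ by diagram~\eqref{tdefnd}. Setting $t=t^\chi$ then yields \eqref{E:unit_eqns}.

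For sufficiency I will construct $\chi$ from $t$ piece by piece, following the formulas that appeared in the proof of Lemma~\ref{L:chi_t}. First, set $\chi_{\fp_j}|_{\OO_j^\ast}=t_j\circ\log_j$ for primes above $p$ and $\chi_\fq|_{\OO_\fq^\ast}=0$ for the remaining finite primes; both are continuous by construction. To extend each $\chi_v$ to all of $K_v^\ast$, I use $\xi_\fq$: the only non-unit local component of the principal idele $\xi_\fq$ occurs at $\fq$, so the desired equation $\chi(\xi_\fq)=0$ forces
\begin{equation*}
\chi_\fq(\sigma_\fq(\xi_\fq))=-\sum_{i\,:\,\fp_i\ne\fq}t_i(\log_i(\sigma_i(\xi_\fq))).
\end{equation*}
Since $\sigma_\fq(\xi_\fq)=u_\fq\pi_\fq^{h_K}$ for some $u_\fq\in\OO_\fq^\ast$ and a uniformizer $\pi_\fq$, and $h_K$ is invertible in $\Q_p$, this determines $\chi_\fq(\pi_\fq)$, and hence $\chi_\fq$, uniquely on $K_\fq^\ast$; for ramified $\fp_j\mid p$ this is exactly the extension of $\log_j$ alluded to in Remark~\ref{ramified}.

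It remains to check that $\chi=\sum_v\chi_v$ vanishes on all of $K^\ast$. Because $\Q_p$ is torsion-free, it suffices to verify $\chi(\alpha^{h_K})=0$ for arbitrary $\alpha\in K^\ast$. Writing $(\alpha)=\prod_\fq\fq^{n_\fq}$, the principal ideals $(\alpha^{h_K})$ and $\prod_\fq(\xi_\fq)^{n_\fq}$ coincide, so $\alpha^{h_K}=u\prod_\fq\xi_\fq^{n_\fq}$ for some $u\in\OO_K^\ast$. Every $\chi(\xi_\fq)$ vanishes by construction, so $\chi(\alpha^{h_K})=\chi(u)$, and the same decomposition used in the necessity direction together with hypothesis~\eqref{E:unit_eqns} applied to $u$ gives $\chi(u)=0$. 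The main obstacle, I expect, is not the final verification on $K^\ast$ but rather ensuring that the local pieces assemble into a bona fide continuous character on $\mathbb{A}_K^\ast$: in particular, one must justify the extension of $\chi_{\fp_j}$ to $K_j^\ast$ in the ramified case and confirm that for almost every place the local character is trivial on $\OO_v^\ast$, so that $\sum_v\chi_v$ converges on every idele. Both are automatic from the construction, but deserve to be spelled out.
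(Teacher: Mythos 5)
Your proposal is correct and follows essentially the same route as the paper: necessity by decomposing $\chi(\eps)$ into local terms and using the vanishing of $\chi_\fq$ on $\OO_\fq^\ast$ away from $p$, and sufficiency by building $\chi$ from $t$ via the recipe implicit in Lemma~\ref{L:chi_t} and then verifying $\chi(K^\ast)=0$ via the class-number trick (write $\alpha^{h_K}=u\prod\xi_{\fq}^{n_\fq}$ and reduce to $\chi(u)=0$, which is hypothesis~\eqref{E:unit_eqns}). The only stylistic difference is that you spell out the local construction of $\chi$ from $t$ in more detail, whereas the paper just points back to the proof of Lemma~\ref{L:chi_t}; your closing remark about verifying that the local pieces form a genuine character on $\mathbb{A}_K^\ast$ (almost-everywhere unramified, hence well-defined on all ideles) is a worthwhile observation that the paper leaves implicit.
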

\begin{proof}
  Suppose that $\chi$ is a continuous idele class character, determined by $t=t^{\chi}\in
  (K\otimes\Q_p)^\vee$ as in Lemma~\ref{L:chi_t}.
  Note that for a unit $\eps\in \OO_K^\ast$ the value $\chi_{\fq}(\eps)$ depends only
  on $t_{\fq}$ for $\fq\mid p$ and is $0$ for all other primes $\fq$.
  Hence~\eqref{E:unit_eqns} must be satisfied.
  
  Conversely, suppose that $t \in (K\otimes\Q_p)^\vee$ and let
  $\chi$ denote the character $\chi \colon \A_K^\times\to \Q_p$  associated to $t$ as in Lemma~\ref{L:chi_t}.
Given $\beta\in K^*$, we need to show that $\chi(\beta)=0$. Suppose that
$(\beta) = \prod {\fq}_i^{e_i}$ is the decomposition of the fractional ideal
$(\beta)$, for primes $\fq_i$ of $K$. 
  Then
\begin{equation*}
  (\beta^{h_K}) = \prod \fq_i^{h_Ke_i} = \prod (\xi_{\fq_i})^{e_i} = \left(\prod
  \xi_{\fq_i}^{e_i}\right),
\end{equation*}
so that $\beta^{h_K} = \prod \xi_{\fq_i}^{e_i} u$ for some unit $u\in \OO_K^*$. But
since the construction of $\chi$ was done so that $\chi(\xi_\fq)=0$
for every $\fq$ and $\chi(u)=0$ for every unit, we see that
$\chi(\beta^{h_K})=0$ and hence $\chi(\beta)=0$.
\end{proof}
We deduce the following result:
\begin{corollary}\label{C:leo}
The continuous idele class characters of $K$ form a $\Q_p$-vector space
$V_K$ of dimension $\geq r_2 + 1$. 
If Leopoldt's conjecture~\cite{leopoldt1} holds for $K$, then we have $\dim_{\Q_p}V_K = r_2+1$.
In particular, this holds if $K/\Q$ is an abelian extension.
\end{corollary}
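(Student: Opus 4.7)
The plan is to reinterpret Lemma~\ref{L:icc_algo} as an exact sequence and then compute the dimension by rank-nullity, with Leopoldt's conjecture supplying the codimension count.

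First, by Lemma~\ref{L:chi_t} together with Lemma~\ref{L:icc_algo}, the assignment $\chi \mapsto t^\chi$ identifies $V_K$ with the linear subspace of $(K\otimes\Q_p)^\vee$ cut out by the family of linear conditions
\[
  t_{1}(\log_1(\sigma_1(\eps)))+\cdots+ t_{m}(\log_m(\sigma_m(\eps)))=0, \qquad \eps\in \OO_K^\ast.
\]
Equivalently, consider the $p$-adic logarithm map
\[
  L\colon \OO_K^\ast \longrightarrow K\otimes\Q_p, \qquad \eps\longmapsto (\log_j(\sigma_j(\eps)))_{j=1}^m,
\]
and let $U\subset K\otimes\Q_p$ denote the $\Q_p$-linear span of $L(\OO_K^\ast)$. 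Then $V_K$ is precisely the annihilator $U^\perp\subset (K\otimes\Q_p)^\vee$, so
\[
  \dim_{\Q_p} V_K \;=\; d - \dim_{\Q_p} U.
\]

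Next I would bound $\dim_{\Q_p} U$. Since $L$ kills torsion (as $p$ is odd and unramified in $K$, any root of unity maps to $0$ under $\log_j\circ\sigma_j$) and $\OO_K^\ast$ has $\Z$-rank $r_1+r_2-1$, the map $L$ factors through a $\Q_p$-linear map of a $(r_1+r_2-1)$-dimensional $\Q_p$-vector space into $K\otimes\Q_p$, so $\dim_{\Q_p} U \le r_1+r_2-1$. Combined with $d=r_1+2r_2$, this gives
\[
  \dim_{\Q_p} V_K \;\ge\; r_1+2r_2 - (r_1+r_2-1) \;=\; r_2+1,
\]
which is the first assertion.

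For the second assertion, I would recall that Leopoldt's conjecture for $K$ at $p$ is exactly the statement that the induced $\Q_p$-linear map $(\OO_K^\ast/\text{tors})\otimes_\Z\Q_p \to K\otimes\Q_p$ is injective, i.e.\ $\dim_{\Q_p} U = r_1+r_2-1$. Under Leopoldt, the inequality above becomes equality, so $\dim_{\Q_p} V_K = r_2+1$. Finally, for $K/\Q$ abelian, Leopoldt's conjecture is a theorem of Ax and Brumer (via Baker's theorem on $p$-adic linear forms in logarithms), so the equality holds unconditionally in that case. The only mildly subtle point is being careful that the dualization of the logarithm map really does match the defining condition of Lemma~\ref{L:icc_algo}; everything else is a direct dimension count.
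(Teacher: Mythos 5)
Your proposal is correct and follows essentially the same argument as the paper: both identify $V_K$ with the solution space of the unit relations~\eqref{E:unit_eqns}, count at most $r_K=r_1+r_2-1$ independent relations because $\OO_K^\ast$ has rank $r_K$, and invoke Leopoldt (with Brumer's theorem in the abelian case) to get equality. Your formulation via the annihilator of the $\Q_p$-span of $L(\OO_K^\ast)$ is just a dualized restatement of the paper's ``independence of equations'' criterion; note only that the vanishing of $\log_j\circ\sigma_j$ on torsion needs no hypothesis on $p$ being odd or unramified, since the $p$-adic logarithm always annihilates elements of finite order.
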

\begin{proof}
  The space $V_K$ has dimension precisely $d-r_K = r_2+1$
  if the
  equations~\eqref{E:unit_eqns} are independent; otherwise its dimension is larger.
  Leopoldt's conjecture \cite{leopoldt1} predicts that the $p$-adic regulator of $K$ is
  nonzero, which is equivalent to independence of the equations~\eqref{E:unit_eqns}.  
  Brumer~\cite{brumer} proved Leopoldt's conjecture for abelian $K/\Q$.
\end{proof}

\begin{remark}\label{R:mihailescu}
  For our applications it suffices that $\dim_{\Q_p} V_K \ge r_2+1$. 
\end{remark}

\begin{remark}\label{R:leo}
  Corollary~\ref{C:leo} is classical, see for instance~\cite{mazur:icm83}, but our proof yields a method
  for actually determining a basis of $V_K$, provided we know the class number $h_K$, fundamental units
and a method for computing, for each prime $\fq$ of $K$, a generator
$\xi_\fq\in K^\ast$ of the principal ideal $\fq^{h_K}$.
\end{remark}

\begin{example}\label{ex:cyc}
  If $t=(t_1,\ldots,t_m) \in (K\otimes\Q_p)^\vee$, where $t_j = \tr_{K_j/\Q_p}$, then
  the equation~\eqref{E:unit_eqns} translates into the trivially satisfied equation
  $\log(N_{K/\Q}(\eps)) = 0\,,$.
This choice of $t$ gives rise to a continuous idele class character $\lc$, which one
  obtains by composing the usual idele class
character over $\Q$ (corresponding to $t_1=\mathrm{id}_{\Q_p}$) with the norm on the idele class group.
We call $\lc$ the {\em cyclotomic} character; it cuts out the cyclotomic $\Z_p$-extension
of $K$.  
  Note that in the case of a totally real number field, if Leopoldt's conjecture holds, then every continuous idele class character is a scalar
multiple of $\lc$ by Corollary~\ref{C:leo}.
\end{example}

\begin{example}\label{ex:split}
If $p$ is totally split in $K$, then the trace maps correspond to scalar multiplication on
  $K_j\cong \Q_p$, so by Lemma~\ref{L:icc_algo}, the space $V_K$ of continuous idele class characters is isomorphic to the subspace of all $(c_1,\ldots,c_d) \in \Q_p^d$ such that
  \begin{equation}\label{E:unit_eqns_split}
  c_1\log(\sigma_1(\eps))+ \ldots+ c_d\log(\sigma_d(\eps)) = 0
  \end{equation}
  for all $\eps\in \OO_K^\ast$.
\end{example}

\begin{example}\label{ex:anti}
Let $K$ be imaginary quadratic.
Denote by $c\colon \A_K^\ast/K^\ast\to\A_K^\ast/K^\ast$ the involution induced by complex
conjugation.
We say that a continuous idele class character $\chi \in V_K$ is {\em anticyclotomic} if
$\chi\circ c = -\chi$. 
  Such characters cut out the anticyclotomic $\Z_p$-extension of $K$ and play an important
  role in Iwasawa theory.
The space $V_K$ of continuous idele class characters
is spanned by 
the cyclotomic character and any nontrivial anticyclotomic character.

If we assume, in addition, that $p$ splits completely in $K$, then it turns out that $\chi\in V_K$
is anticyclotomic if and only if \[c_1 +c_2 = 0\] in the notation of Example~\ref{ex:split}.
In this case, we call the character $\la$ corresponding to the choice $c_1 = 1$ and $c_2 =-1$
{\em the anticyclotomic} character. 
See~\cite[\S1]{WIN3} for an explicit construction of $\la$.
\end{example}

\subsection{Coleman-Gross $p$-adic height pairings}\label{S:padic_hts}
We keep the notation of \S \ref{S:icc}. 
Let $X/K$ be a smooth projective geometrically irreducible curve of genus $g>0$ with good reduction at
$\fp_1,\ldots,\fp_m$ and let $J$ denote its Jacobian. The rank $\rk(J/K)$ will be denoted
by $r$.
We fix, for every $j\in \{1,\ldots,m\}$, a splitting
\[
  \hdr^1(X/K_j) = H^0(X/K_j,\Omega^1)\oplus W_j\, ,
\]
such that $W_j$ is isotropic with respect to the cup product pairing.
For instance, when
$\fp_j$ is ordinary, we can take the unit root subspace with respect to Frobenius.

There are several ways of attaching a $p$-adic height pairing 
\begin{equation}\label{phtjac}
  h^\chi\colon J(K)\times J(K) \to \Q_p
\end{equation}
to a continuous idele class character $\chi \in V_K$,
see for instance the work of Schneider~\cite{Schn82},
Mazur-Tate~\cite{mazur-tate}, Coleman-Gross~\cite{Col-Gro89} and
Nekov{\'a}{\v{r}}~\cite{Nek93}. The pairing $h^{\chi}$ depends on our fixed choices of $W_j$.
If the reduction is ordinary, $W_j$ is the unit root subspace of Frobenius at all $j$, and  $\chi$ is ramified above $p$, the constructions of Schneider, Mazur-Tate, Coleman-Gross and Nekov{\'a}{\v{r}} are all known to be
equivalent.
The $p$-adic height pairing is bilinear and functorial, see~\cite[$\S3.4$]{mazur-tate}. 
It is furthermore symmetric, since we assume all $W_j$ to be isotropic.
Denote by $\Bil$ the space of $\Q_p$-valued
bilinear forms on $J(K)\otimes_{\Z} \Q_p$. The function $V_K \to
\Bil$ mapping $\chi$ to $h^{\chi}$ is $\Q_p$-linear.
The dimension of its image is bounded from above by $$\min\left\{\dim_{\Q_p}V_K,
\frac{r(r+1)}{2}\right\}\ge\min\left\{r_2+1,\frac{r(r+1)}{2}\right\}\,,$$
where the inequality is an equality if Leopoldt's conjecture holds for $K$
(see Corollary~\ref{C:leo}).
Lower bounds on the dimension are much more difficult. For instance, when the reduction is
ordinary, the cyclotomic height attached to the unit root splitting has been conjectured
to be non-degenerate by Schneider, but this remains open to date. 

As in~\cite{BBM14}, we follow the approach of Coleman-Gross.
Let $\chi = \sum_v\chi_v \in V_K$
be a nontrivial continuous idele class character, with corresponding trace maps
$t^\chi = (t^{\chi}_1,\ldots,t^{\chi}_m) \in (K\otimes\Q_p)^\vee$.
Coleman and Gross construct local bi-additive symmetric $p$-adic height pairings $h^{\chi}_v$ for every non-archimedean
place $v$ of $K$ on divisors of degree~0 with disjoint support. Then they define the global
$p$-adic height pairing to be their sum $h^\chi = \sum_vh^\chi_v$; this turns out to
respect linear equivalence and hence gives a well-defined symmetric bilinear
pairing~\eqref{phtjac}.

Following Gross~\cite[\S5]{Gross86}, the first- and second-named author
removed in \cite{BB:Crelle} the condition that the divisors have to be relatively prime for the local
$p$-adic height pairings to make sense.
For this, $p$-adic Arakelov theory as developed by the second-named author~\cite{Bes00} was used.
This extension to arbitrary divisors of degree~0 requires the choice of a
tangent vector for every $Q \in X$. If one chooses such vectors consistently for all
primes, then one obtains that the sum of the local height pairings is independent of this
choice.

We now recall the construction of the local height pairings.
First let $\fq$ be a prime of $K$ not dividing $p$. 
Then the local height pairing $h^\chi_{\fq}$ determined by $\fq$ is defined in terms of
intersection theory, as follows: Let $D, D'$ be divisors on $X/K_{\fq}$ of degree 0 and with
disjoint support.
Fix a proper regular model $\XX$ of $X/ K_\fq$ over $\Spec \OO_{K_\fq}$ and extend $D$ and
$D'$ to $\Q$-divisors $\DD,\DD'$ on $\XX$ which have intersection multiplicity~0 with every
vertical divisor. Then we define, as in~\cite{Col-Gro89},
\begin{equation}\label{away_disjoint}
  h^\chi_{\fq}(D, D') \colonequals \chi_{\fq}(\pi_{\fq})(\DD\cdot\DD')\,,
\end{equation}
where  $\pi_\fq$ is a uniformizer in $K_\fq$ and $(\DD\cdot\DD')\in \Q$ is the intersection multiplicity. 
A choice of tangent vector makes it possible to define the intersection between two
divisors with common component, which can be used to extend the
definition~\eqref{away_disjoint} in a straightforward way, see~\cite[\S6]{Gross86} and \cite[Proposition~2.4]{BB:Crelle}.
One can define the height pairing at all $\fp_j$ such that $\chi_j$ is unramified
in an analogous way.

Let $j \in \{1,\ldots,m\}$ such that $\chi_j$ is ramified and consider the trace map
$t^{\chi}_j \colon K_j \to \Q_p$.
Suppose that $D$ and $D'$ are divisors of degree~0 on $X/ K_j$. 
If they have disjoint support, then their local height pairing at $\fp_j$ determined by
$\chi$ is defined as
\[
  h^{\chi}_{\fp_j}(D,D') \colonequals t^{\chi}_j \left(\int_{D}\omega_{D'}\right)\, ,
\]
where $\omega_{D'}$ is a differential of the third kind with residue divisor $D'$, normalized
with respect to $W_j$ as in~\cite[\S3]{Col-Gro89}. The integral is a Coleman integral; it
depends on the branch $\log_j\colon K_j^\ast\to K_j$ of the logarithm determined by
$\chi$, see Remark~\ref{ramified}.
To extend this to divisors with common support, one uses that 
$$
  h^{\chi}_{\fp_j}(D,D') = t^{\chi}_j(G_{D}(D')) 
$$
where $G_{D}$ is the $p$-adic Green function on $X_{ K_j} (\overline{K_j})\setminus
\supp(D)$ defined in~\cite{Bes00}.
The choice of a tangent vector at every point of $X$ makes it possible to define
$G_{D}[D']\in K_j$
for any $D'$ of degree~0, so that $G_{D}[D'] = G_{D}(D')$ when $D$ and $D'$ have
disjoint support.
The general formula for the local height pairing of $D$ and $D'$ given
in~\cite[Proposition~3.4]{BB:Crelle} is then
$$
  h^{\chi}_{\fp_j}(D,D') = t^{\chi}_j(G_{D}[D']).
$$
\begin{example}\label{ex:ht_cyc}
The local cyclotomic height pairings 
for $D,\,D' \in \Div^0(X)$ with disjoint support
 are given by 
\[  
  \hc_{\fp_j}(D, D') = \tr_{K_j/\Q_p}\left(\int_{\sigma_j(D')}\omega_{\sigma_j(D)}\right)
\]
  for $j\in\{1,\ldots,m\}$ 
and 
\[
  \hc_{\fq}(D, D') = -\frac{1}{h_K}\log(N(\xi_\fq))\cdot (D,D')_{\fq}\,,
\]
for primes $\fq\nmid p$.
\end{example}

\begin{example}\label{ex:ht_anti}
  We now describe the anticyclotomic height $\ha \colonequals h^{\la}$ when $K$ is imaginary
  quadratic, see Example~\ref{ex:anti}.
In our notation, the local heights $\ha_{\fp_i}$ are 
\[  
    \ha_{\fp_1}(D, D') = \int_{\sigma_1(D')}\omega_{\sigma_1(D)}\;\textrm{ and }\;
    \ha_{\fp_2}(D, D') = -\int_{\sigma_2(D')}\omega_{\sigma_2(D)}\,.
\]
and the local heights away from $p$ are given by
\[
  \ha_{\fq}(D, D') = \frac{1}{h_K}\log\left(\frac{\sigma_2(\xi_\fq)}{\sigma_1(\xi_\fq)}\right)\cdot
  (D,D')_{\fq}\,.
\]

Using these formulas, we get a practical method for computing $\ha$. See \cite{WIN3} for
  an alternative algorithm.
\end{example}

\section{Chabauty over number fields}\label{S:cnf}
In this section, we recall a refinement of the method of Chabauty and Coleman over number fields.
The idea is that by viewing the $K$-rational points on $X/K$ as the
$\Q$-rational points on the Weil restriction of scalars $\Res_{K/\Q}(X)$ and using
$\Q_p$-valued (rather than $K_{\mathfrak{p}}$-valued, for a single $\mathfrak{p}\mid p$) integrals, one can often compute
$X(K)$ when the rank of the Jacobian over $K$ is greater than $g$.
This was first suggested by Wetherell at a talk at MSRI in~2000, but he never published the details. Siksek ~\cite{siksek:ecnf} later gave a Chabauty-Coleman method over number fields inspired by Wetherell's work. This allows one to compute $X(K)$ 
in many cases using a combination of his
idea with the Mordell-Weil sieve. See also Triantafillou's
work~\cite{Nick19} for a recent application to hyperbolic curves of genus 0 and $S$-unit
equations.

We keep the notation of the previous section. We also assume that there is a rational
point $P_0\in X(K)$, and we let $\iota\colon X \hookrightarrow J\colonequals \Jac(X)$ denote the Abel-Jacobi map corresponding to it.
We write  $ X(K\otimes\Q_p)$  for the subset $X(K_1)\times\cdots\times X(K_m)$ of
$ X(\Qpb)^m$. Let $\sigma \colon X(K) \hookrightarrow X(K\otimes\Q_p)$ be the embedding
  induced by $\sigma_j  \colon  X(K) \hookrightarrow X(K_j)$.

Siksek's goal is to explicitly
construct $d=[K:\Q]$ locally analytic functions 
\[
  \rho_1,\ldots,\rho_d \colon X(K\otimes\Q_p)\to \Q_p
\]
such that
\begin{enumerate}[label=(\roman*)]
  \item we have $\rho_l(\sigma(X(K)))=0$ for all $1 \le l \le d$;
  \item the set
    \[
      B \colonequals \left\{\zvec \in X(K\otimes\Q_p) \, \colon \, \rho_l(\zvec) =0\;\;\textrm{for
    all}\;\;1\le l \le d\right\}
  \]
  is finite.
\end{enumerate}
We need (at least) $d$ such functions to cut out a finite subset of $X(K\otimes \Q_p)$ for
reasons of dimension.

With a view toward explicit computations and generalizations, we now discuss how 
the functions $\rho_l$ can be constructed in terms of Coleman integrals.
Let $j \in \{1,\ldots,m\}$ and let $z_j \in X(K_j)$.
If $\omega$ is a holomorphic~1-form on $X/K_{j}$, then the integral
$\int_{P_0}^{z_j}
\omega$ is an element of $K_j$. We can compose it with a trace map
$t_j \in \Hom_{\Q_p}(K_j,\Q_p)$ to obtain a locally analytic function
\[
  X(K_j)\to \Q_p\,;\;\;  z_j \mapsto t_j\left(\int^{z_j}_{P_0} \omega\right)\,.
\]
For every $j \in\{1,\ldots,m\}$, we fix a basis $(t_{j,1},\ldots,t_{j,d_j})$ of
$\Hom_{\Q_p}(K_j,\Q_p)$, where $d_j = [K_j:\Q_p]$; we also fix a basis
$(\omega_0,\ldots,\omega_{g-1})$ of $H^0(X/K,\Omega^1)$.
\begin{definition}\label{D:fijk}
  Let $i \in \{0,\ldots,g-1\}$, let $j \in\{1,\ldots,m\}$ and let $k \in
  \{1,\ldots,d_j\}$.
We define 
    \[
      f^{(j,k)}_i(\zvec) \colonequals
      t_{j,k}\left(\int^{z_j}_{P_0}\sigma_j(\omega_i)\right)\in\Q_p
  \]
  for $\zvec= (z_1,\ldots,z_m) \in X(K\otimes\Q_p)$.
For $Q \in X(K)$, we set
  \[
    f^{(j,k)}_i(Q) \colonequals f^{(j,k)}_i(\sigma(Q))\,.
  \]
\end{definition}
We fix an ordering of the $f^{(j,k)}_i$ and label their restrictions to $X(K)$ as follows:
\[
  f_0,\ldots,f_{dg-1} \colon  X(K) \to \Q_p\,.
\]
Extending these functions to Galois-equivariant functions $f_i\colon X(K\otimes
\bar{\Q}_p)\to \bar{\Q}_p$ in the obvious way and then to $J(K)\otimes_{\Z}\Q_p$   by additivity, we obtain $\Q_p$-valued linear functionals
\begin{equation}\label{E:fiJ}
  f_0,\ldots,f_{dg-1} \colon J(K)\otimes_{\Z}\Q_p\to \Q_p\,.
\end{equation}
We fix some more notation:
Let $(J(K)\otimes\Q_p)^\vee = \Hom_{\Q_p}(J(K)\otimes_{\Z}\Q_p, \Q_p)$, let
\[
  U = \mathrm{Span}_{\Q_p}(f_0,\ldots,f_{dg-1})\subset (J(K)\otimes\Q_p)^\vee\,,
\]
and let 
\[
  n = dg - \dim_{\Q_p}(U)
\]
denote the maximal number of independent relations between the $f_i$.

Suppose that 
\begin{equation}\label{ndcond}
  n \ge d
\end{equation}
and fix a set of $d$ independent relations in $U$.
These relations, pulled back via the Abel-Jacobi map $\iota$, extend to explicitly computable locally analytic functions
\[
  \rho_1,\ldots,\rho_d  \colon X(K\otimes\Q_p) \to \Q_p
\]
such that
\[
\rho_l(\sigma(X(K)))=0
\]
for all $l=1,\ldots,d$.
Then, provided the common zero set $B$ of $\rho_1,\ldots,\rho_d$ is finite,
we can explicitly determine $X(K)$, by first computing $B$ (to suitable $p$-adic
precision) and then identifying $\sigma(X(K)) \subset B$ using the Mordell-Weil sieve,
if $g\ge 2$.
See~\cite{siksek:ecnf} for details.

As mentioned previously, over $\Q$ the condition $r<g$ is a sufficient criterion for the
method of Chabauty-Coleman to work.
But when $K\ne\Q$, it is much more delicate to predict whether the zero set $B$ is finite, as
we have to deal with systems of multivariate power series 
(see also Appendix \ref{appendix:roots_multi}).
Since  $\dim_{\Q_p}(J(K)\otimes\Q_p)^\vee = r$,
we have
$ n \ge dg-r$. Therefore one might guess that for finiteness of $B$ it should be sufficient that
\begin{equation}\label{E:samir_cond}
  r \le d(g-1).
\end{equation}
This is not the case, as one can find examples where $B$ is infinite because $X$ can be defined over some subfield $F$ of $K$ over which $\rk(J/F)>[F\colon
\Q](g-1)$; see~\cite{siksek:ecnf} for a discussion.
Siksek asks whether $B$ is finite whenever 
\begin{equation}\label{E:cond_sub}
  \rk (\Jac(Y)/F) \le [F:\Q](g-1)
\end{equation}
for all subfields $F$ and for every smooth projective curve $Y/F$ such that
$Y\times_F K
\cong_K X$.
This was shown to be false by Dogra~\cite{Dog19}; he constructs a hyperelliptic genus 3 curve
$X$ with minimal field of definition $K\colonequals \Q(\sqrt{33})$
covering a genus 2 curve $X_0/\Q$ 
whose Jacobian $J_0$ satisfies $\rk(J_0/\Q)=2$. This $X$ satisfies the
condition~\eqref{E:cond_sub}, but the functions $\rho_l$ vanish in the preimage
of $X_0(\Q_p)$ in $X(K\otimes \Q_p)$, so $B$ is infinite.
Dogra also shows that $B$ is finite whenever $r\le d(g-1)$ and $\Hom(J(K)_{\bar{\Q},\sigma_1}, J(K)_{\bar{\Q},\sigma_2})$ is trivial for any two
distinct embeddings $\sigma_1,\sigma_2\colon K\hookrightarrow\bar{\Q}$, i.e. whenever $J$ and its
conjugates do not share an isogeny component over $\bar{\Q}$.

When $r>d(g-1)$, then the method of Siksek is in general not applicable. In the following
two sections, we discuss how to extend it using $p$-adic heights.

\section{Quadratic Chabauty for integral points on odd degree hyperelliptic curves over number fields}\label{S:qcnf}
Suppose that $X/K$ is hyperelliptic, given by an equation $X\colon y^2=f(x)$, where
$f \in \OO_K[x]$ is a separable polynomial of degree $2g+1\ge 3$ which
does not reduce to a square modulo any prime. 
Let $\iota$ be the Abel-Jacobi map with respect to $\infty=(1:0:0) \in X(K)$. 
We denote by $\UU$ the affine scheme $\Spec \OO_K[x, y] / (y^2 - f(x))$.

For the case $K=\Q$, the first, second and fourth author showed in~\cite{BBM14, BBM16} how to compute $\UU(\Z)\subset X(\Q)$
when $r=g$ using a locally analytic function constructed in terms of the cyclotomic $p$-adic height. 
We now show how to generalize this idea to arbitrary number fields. Generically, we expect
our method to work when 
\begin{equation}\label{ranks}
r+r_K\le dg\,.
\end{equation}

More precisely, we try to 
construct explicitly computable functions 
\[\rho_1,\ldots,\rho_d \colon  \UU(\OO_K\otimes\Z_p) \to \Q_p
\]
which are locally analytic on $p$-integral points 
and explicitly computable finite subsets $T^{(1)}, \ldots, T^{(d)} \subset \Q_p$
such that
\begin{enumerate}[label=(\roman*)]
  \item we have $\rho_l(\sigma(\UU(\OO_K)))\subset T^{(l)}$ for all $1 \le l \le d$;
  \item the set 
    \[
      B \colonequals \left\{ \zvec \in \UU(\OO_K\otimes\Z_p) \, \colon \, \rho_l(\zvec) \in T^{(l)}\;\;\textrm{for
    all}\;\;1\le l \le d\right\}
  \]
  is finite. 
\end{enumerate}
As in Section \ref{S:cnf}, let $U$ be the $\Q_p$-span of the linear functionals $f_1,\dots, f_{dg-1}\colon J(K)\otimes \Q_p\to \Q_p$.  We show how to construct such functions $\rho_1,\ldots,\rho_d$ when~\eqref{ranks} and the following condition are
satisfied:
\begin{condition}\label{C:crucial}
  $U = (J(K)\otimes\Q_p)^\vee$.
\end{condition}
The finiteness of $B$ turns out to be much more difficult to prove.
If Condition~\ref{C:crucial} is satisfied, then we have $\dim_{\Q_p} U = r$ and hence 
\begin{equation}\label{rank_cond}
  n+r = dg\,;
\end{equation}
in particular, we need to have $r \le dg$ for Condition~\ref{C:crucial} to hold,
and if $r=dg$, then Condition~\ref{C:crucial} is equivalent to surjectivity of  the map in~\eqref{log2}.
We order the $f_i$ so that $(f_0,\ldots,f_{r-1})$ is a basis of
$(J(K)\otimes\Q_p)^\vee$.

In \S\ref{S:padic_hts} we discussed how a continuous idele class character $\chi$
gives rise to a $p$-adic height pairing $h^\chi$.
So let $\chi \in V_K$ be nontrivial with associated trace maps $t^\chi =
(t^{\chi}_1,\ldots,t^{\chi}_m) \in (K\otimes\Q_p)^\vee$.
We choose the basis  $(\omega_0,\ldots,\omega_{g-1})$ 
of $H^0(X/K,\Omega^1)$ given by $\omega_k \colonequals x^k dx/2y$
and we denote the differentials on $X/K_j$ obtained via $\sigma_j$ by $\omega_k$ as well. 
In order to define the local height pairings  for arbitrary divisors 
of degree~0, we fix a choice of tangent vector for every point $Q \in X$ as in~\cite{BBM14}.
Namely, we take the tangent vector induced by $\omega_0$ by duality for all affine points. 
For the point $\infty$, we pick the dual of the value of $\omega_{g-1}$ at
$\infty$.
We denote by $(\bom_0,\ldots, \bom_{g-1})$ the
unique basis of $W_j$ which is dual to $(\omega_0,\ldots,\omega_{g-1})$ with respect to the cup product pairing.
For $z \in X({K_j})\setminus\{\infty\}$ we define the double Coleman integral 
$$\tau_j(z) = -2\int_{b_0}^z\sum_{i=0}^{g-1}\, \omega_i\bom_i\in K_j\,,$$
where $b_0$ is the tangential basepoint determined by our choice of tangent vector at
$\infty$.
Note that $\tau_j$ depends on the branch $\log_j\colon K_j^\ast\to K_j$ determined by
$\chi$. This is not reflected in our notation, since we are mostly interested
in the restriction of $\tau_j$ to $\UU(\OO_j)$.
\begin{theorem}\label{T:rhos}
  Suppose that Condition~\ref{C:crucial} is satisfied.
  Then there are explicitly
  computable constants $\alpha^\chi_{ij} \in\Q_p$ and an explicitly computable finite set $T^{\chi}\subset \Q_p$  such that the function 
  \[
    \rho^\chi \colon \UU(\OO_K\otimes\Z_p) \to \Q_p
\]
defined by
\[
  \rho^\chi(\zvec) \colonequals \sum^m_{j=1}t^{\chi}_j(\tau_j(z_j)) -
  \sum_{0 \le i\le j\le r-1}\alpha_{ij}^\chi f_i(\zvec)f_j(\zvec)\,,
\]
satisfies
\[
  \rho^\chi(\sigma(\UU(\OO_K))) \subset T^{\chi}\,.
\]
\end{theorem}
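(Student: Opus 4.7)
The plan is to show that on $\sigma(\UU(\OO_K))$, the function $\rho^\chi$ equals minus the sum of the local components of the global $p$-adic height $h^\chi$ at places $\fq\nmid p$, and then to bound this sum in an explicitly computable finite set. The argument directly generalizes the proof of~\cite[Theorem~3.1]{BBM14}, with the cyclotomic height over $\Q$ replaced by $h^\chi$ and the single integral $\tau$ replaced by the collection $(\tau_j)_{j=1}^m$.

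First, I would express the quadratic part of $\rho^\chi$ as the global height. Under Condition~\ref{C:crucial}, $(f_0,\ldots,f_{r-1})$ is a basis of $(J(K)\otimes\Q_p)^\vee$, so the products $\{f_if_j:0\le i\le j\le r-1\}$ span the space of quadratic forms on $J(K)\otimes\Q_p$. Since $P\mapsto h^\chi(P,P)$ is such a form, we can solve a linear system, whose entries are $p$-adic heights on a set of generators of $J(K)\otimes\Q_p$ (computable by~\cite{BB:Crelle}), to obtain constants $\alpha^\chi_{ij}\in\Q_p$ with
$$h^\chi(\iota(z),\iota(z))=\sum_{0\le i\le j\le r-1}\alpha^\chi_{ij}f_i(z)f_j(z)\quad\text{for all }z\in X(K).$$
Next, for $z_j\in\UU(\OO_j)$, I would identify $t^\chi_j(\tau_j(z_j))$ with the local height $h^\chi_{\fp_j}(z_j-\infty,z_j-\infty)$, by adapting the argument of~\cite[Theorem~2.2]{BBM14} to the branch $\log_j$ of the logarithm determined by $\chi$ (Remark~\ref{ramified}) and to the dual basis $(\bom_i)$ of the isotropic complement $W_j$. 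Combining both identifications with the global decomposition $h^\chi=\sum_v h^\chi_v$, we obtain, for $z\in\UU(\OO_K)$,
$$\rho^\chi(\sigma(z))=\sum_{\fp\mid p}h^\chi_\fp(z-\infty,z-\infty)-h^\chi(\iota(z),\iota(z))=-\sum_{\fq\nmid p}h^\chi_\fq(z-\infty,z-\infty).$$

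It remains to show that the right-hand side takes values in a finite, explicitly computable set $T^\chi\subset\Q_p$. By formula~\eqref{away_disjoint}, extended to the self-pairing via our fixed tangent vectors at $\infty$ and the affine points, we have $h^\chi_\fq(z-\infty,z-\infty)=\chi_\fq(\pi_\fq)\cdot I_\fq(z)$, where $I_\fq(z)\in\Q$ is an intersection multiplicity on a proper regular model of $X/K_\fq$. The argument of~\cite[Proposition~3.3]{BBM14} applies verbatim: $I_\fq(z)$ depends only on the components of the special fibre met by the reductions of $z$ and of $\infty$, so it takes finitely many values at each $\fq$, and $I_\fq(z)=0$ whenever $\fq$ lies outside a finite, effectively computable set $S$ of primes, consisting of those of bad reduction of $X$ together with those at which $\infty$ fails to reduce into the smooth locus. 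One then takes $T^\chi$ to be the negation of the Minkowski sum, over $\fq\in S$, of the finite sets $\chi_\fq(\pi_\fq)\cdot\{I_\fq(z):z\in\UU(\OO_K)\}$.

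The principal obstacle is the identification $t^\chi_j(\tau_j(z_j))=h^\chi_{\fp_j}(z_j-\infty,z_j-\infty)$. Over $\Q_p$ with the cyclotomic character this is~\cite[Theorem~2.2]{BBM14}; in our setting the subtle points are that the branch $\log_j$ entering the Coleman integral defining $\tau_j$ must coincide with the one used to build $h^\chi_{\fp_j}$ via~\eqref{logbranch}, and that the splitting $W_j$ (which need not be the unit-root subspace) with its dual basis $(\bom_i)$ determines the normalization of the differentials of the third kind appearing in the Coleman--Gross formula. Once these compatibilities are verified, the tangent-vector bookkeeping required to make sense of the self-pairing at the supports of $z-\infty$ is routine in light of the formalism of~\cite{BB:Crelle}.
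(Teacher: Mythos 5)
Your proposal is correct and follows essentially the same route as the paper's own proof: expand $h^\chi$ in terms of products of the $f_i$ via Condition~\ref{C:crucial}, identify $t^\chi_j(\tau_j(z_j))$ with the local height at $\fp_j$ by~\cite[Theorem~2.2]{BBM14}, and then control $-\sum_{\fq\nmid p}h^\chi_\fq$ on integral points by the intersection-theoretic argument of~\cite[Proposition~3.3]{BBM14}. The only cosmetic difference is that you work directly with the quadratic forms $f_if_j$ rather than the symmetrized bilinear forms $g_{ij}$ the paper uses, which is equivalent since $h^\chi$ is symmetric and $p$ is odd.
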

\begin{proof}
Setting $g_{ij}(P,Q)\colonequals\frac{1}{2}(f_i(P)f_j(Q) + f_j(P)f_i(Q))$, we obtain a basis
$(g_{ij}  \colon  0\le i\le j \le r-1)$
  of the space of $\Q_p$-valued bilinear forms on $J(K)\otimes_{\Z} \Q_p$, since the latter has dimension
$r(r+1)/2$ and the $g_{ij}$ are independent.
Because $h^\chi$ is a $\Q_p$-valued bilinear form on $J(K)\otimes_{\Z} \Q_p$, we can
find constants $\alpha^\chi_{ij} \in\Q_p$ such that
\begin{equation}\label{h_gij}
  h^\chi = \sum_{0\le i\le j \le r-1} \alpha^{\chi}_{ij}g_{ij}\,.
\end{equation}
  If $\fq$ is a prime of $K$ and $z \in X(K_\fq)\setminus\{\infty\}$, 
  we write
\[
  h_\fq^\chi(z)  \colonequals  h_\fq^\chi((z)-(\infty), (z)-(\infty))\,,
\]
where the right hand side is determined by our choice of tangent vectors.
Then, we have
\begin{equation}\label{height-tau}
  h^\chi_{\fp_j}(z)=t^{\chi}_j(\tau_j(z))
\end{equation}
  for $z \in X(K_j)\setminus\{\infty\}$ 
  by~\cite[Theorem~2.2]{BBM14}.

Now let $\fq$ be a prime of $K$ not dividing $p$. 
Let $\XX$ be a desingularization in the strong sense of
the Zariski closure of $X$ in weighted projective space $\PP_{\OO_{K_\fq}}(1,g+1,1)$.
Let $z \in X(K_\fq)$ and extend the divisor $(z)-(\infty)$ to a $\Q$-divisor 
$\DD_z$ on $\XX$
such that $\DD_z$ has intersection multiplicity~0 with all vertical divisors on $\XX$.
  Then we have $h^{\chi}_\fq(z)= \chi_\fq(\pi_\fq)\DD_z^2\,.$ 
The proof of~\cite[Proposition~3.3]{BBM14} (which treats the special case $K=\Q$ and
$h=\hc$) shows that 
if $z \in \UU(\OO_{K_\fq})$, then the value of $h^\chi_{\fq}(z)$ depends only on the
irreducible component $\Gamma_z$ of the special fiber of $\XX$ that $z$ reduces to and is
  explicitly computable from $\Gamma_z$. 
Furthermore, this value is $0$ if $z$ reduces to a smooth point modulo $\fq$ and if $\ord_\fq(\atp) =
0$, where $\atp$ is the leading coefficient of $f$. 

For $Q \in \UU(\OO_{K})$ we conclude that
\begin{align*}
  \rho^\chi(\sigma(Q)) &= \sum^m_{j=1}t^{\chi}_j(\tau_j(\sigma_j(Q))) - \sum_{0 \le i\le j\le
  r-1}\alpha_{ij}^\chi f_i(Q)f_j(Q) \\
  &= \sum^m_{j=1}h^\chi_{\fp_j}(Q) - h^{\chi}((Q)-(\infty), (Q)-(\infty))\\
  &= -\sum_{\fq \nmid p} h^\chi_{\fq}(Q) 
\end{align*}
indeed takes values in an explicitly computable finite set.
\end{proof}
\begin{remark}
  Theorem~\ref{T:int_intro} follows at once from Theorem~\ref{T:rhos}.
\end{remark}

\begin{remark}
Let us make the following remark concerning the dependence on the branch of the logarithm.
  The function $\tau_j$ is a double Coleman integral. On residue discs where the
  integrands have no singularities, it is rigid analytic. However, there will be a finite
  number of residue discs where $\tau_j$ will be more complicated. Near a point where one
  of the integrands has a singularity, there will be a disc $D$ where the function is given
  by a polynomial of degree at most $2$ in $\log(z)$ with coefficients which are rigid
  analytic functions on $D$, with $z$ a uniformizing parameter on $D$ sending the singular
  point to $0$ (these discs could in general be smaller than residue discs if the
  integrands have several singular points in the same residue disc). We note that in fact,
  due to their source in Green functions, the $\tau_j$ are simpler: On such a disc $D$
  they will be of the form $\varphi(z)+ c\log(z)$, where $\varphi$ is rigid analytic and $c$ is a constant. Solving even a simple polynomial equation involving both $z$ and $\log(z)$ is far more complicated, if possible at all, than an analytic equation. This is one reason why certain quadratic Chabauty techniques do not find all rational points but only those that avoid certain residue disks. On the other hand, whenever one stays away from these problematic disks the equation does not depend on the chosen branch of the logarithm. The dependence on the branch only appears when trying to compute the global height pairing.    
\end{remark}

\begin{remark}\label{R:}
  Our assumption that $f$ does not reduce to a square modulo any prime is only
  required to apply~\cite[Proposition~3.3]{BBM14}. Of course, we can always scale the
  variables to make $f$ monic, and then it is automatically satisfied.
\end{remark}

We can expect at most
$\min(\dim_{\mathbb{Q}_p}V_K,\frac{r(r+1)}{2})$ independent height pairings $h^{\chi}$. 
Recall that $\dim_{\Q_p} V_K \ge r_2+1$ (with equality if Leopoldt's conjecture holds for
$K$). 
The
set of all $\zvec\in \UU(\OO_K\otimes\Q_p)$ such that $\rho^\chi(\zvec)\in T^\chi$ holds for the corresponding functions
can only be finite when $r_2+1=d$, i.e. when $K=\Q$ or $K$ is an imaginary
quadratic  field.

In general, we need  at least $d-(r_2+1) = r_K$ additional functions $\UU(\OO_K\otimes\Q_p) \to \Q_p$ to cut out a finite subset of $\UU(\OO_K\otimes\Q_p)$ containing $\sigma(\UU(\OO_K))$.
It is natural to take these from the relations among the functions $f_i$, as
in~\S\ref{S:cnf}.
Therefore we need the maximal number $n$ of independent relations to be at least $r_K$.
Because of~\eqref{rank_cond}, this means that we want $r_K+r \le dg$, which is
precisely~\eqref{ranks}, to hold.

Let $\rho_1,\ldots,\rho_{r_2+1} \colon \UU(\OO_K\otimes\Z_p) \to \Q_p$ be functions coming from $p$-adic
heights associated to nontrivial continuous idele
class characters $\chi \in V_K$ as above, with corresponding finite sets $T^{(k)} \subset
\Q_p$. Assuming that~\eqref{ranks} holds, we obtain functions  $\rho_{r_2+2},\ldots,\rho_d
\colon X(K\otimes\Q_p) \to \Q_p$ which vanish in $\sigma(X(K))$
from independent relations among the $f_i$ as in Section~\ref{S:cnf}. Set $T^{(l)} = \{0\}\subset \Q_p$ for
$r_2+2\le l \le d$.
Then
\[
  B \colonequals \{\zvec \in \UU(\OO_K\otimes\Z_p) \, \colon \, \rho_l(\zvec) \in T^{(l)}\;\;\textrm{for all}\;\;1\le l \le d\}
\]
contains $\sigma(\UU(\OO_K))$ and is explicitly computable.
Thus, we get 
a method to $p$-adically approximate $\UU(\OO_K)$ in practice whenever $B$ is
finite.

In analogy with the results of Dogra~\cite{Dog19}, we raise the following
\begin{question}\label{rk_question}
  Suppose that Condition~\ref{C:crucial} is satisfied and that we have
\begin{equation}\label{E:qc_cond}
  \rk (\Jac(Y)/F) +r_F \le [F:\Q]\cdot g
\end{equation}
for every subfield $F\subset K$ and for every smooth projective curve $Y/F$ such that $Y\times_F K
  \cong_K X$. Is it true that $B$ can only be infinite for geometric reasons?
\end{question}
\begin{remark}
  Siksek's method always succeeds in constructing a set $B$ containing $X(K)$, provided that $r\leq d(g-1)$. As was explained at the end of Section \ref{S:cnf}, this assumption on the rank is however not sufficient to guarantee finiteness of $B$. On the contrary, even definability of a set $B$ is not ensured by \eqref{E:qc_cond}. If $K=\Q$ and \eqref{E:qc_cond} holds, then either Condition~\ref{C:crucial} is satisfied or classical Chabauty is applicable (or both). For general $K$, it is instead possible to construct examples where \eqref{E:qc_cond} holds, but we are neither in the situation of Condition~\ref{C:crucial} nor in Siksek's. Indeed, let $X$ be a genus $2$ hyperelliptic curve over an imaginary quadratic field $K$. Suppose that $X$ does not not descend to $\Q$ and that its Jacobian splits as the product of two elliptic curves $E_1$ and $E_2$. If $\rk(E_1/K)=3$ and $\rk(E_2/K)=1$ and $E_1$ cannot be defined over $\Q$, then we expect that $\dim_{\Q_p}(U) = 3$. For example, consider the hyperelliptic curve over $\Q(i)$
\begin{equation*}
X\colon y^2 = (14i - 6)x^5 + (-3i + 5)x^4 + 20ix^3 - 15x^2 - 6ix + 1.
\end{equation*}
Its Jacobian splits as the product of
\begin{align*}
E_1&\colon y^2 = x^3 + (-3i - 10)x^2 + (54i + 72)\\
E_2 &\colon  y^2 = x^3 + (-3i - 10)x + (-3i - 9)
\end{align*}
of respective ranks $3$ and $1$. Let $P=(2, -3i - 7),\,Q= (7i + 1,-6i + 22)\in E_1(\Q(i))$ and let $\omega_0$ be an invariant differential on $E_1$. Then, choosing $p=13$, we get $(\int_{\infty}^{\sigma_1(P)}\omega_0)(\int_{\infty}^{\sigma_2(Q)}\omega_0)\neq (\int_{\infty}^{\sigma_2(P)}\omega_0)(\int_{\infty}^{\sigma_1(Q)}\omega_0)$, so \eqref{log2} is surjective for $E_1$. Thus the dimension of $U$ for $J$ is $3$.

When $K=\Q$ it is conjectured by Waldschmidt that it is sufficient that $J$ is simple of
  Mordell-Weil rank at most 
$g$ for Condition~\ref{C:crucial} to hold (see \cite{waldschmidt}).
  The situation is expected to be more complicated for higher degree number fields,
  see~\cite[Remark~6.4]{Poonen:Leopoldt}.
\end{remark}

\begin{remark}
The equations attached by Theorem \ref{T:rhos} to dependent height pairings can sometimes be independent.  To see this, consider an elliptic curve $E$ over $\Q$ of rank $0$. Then all global height pairings are identically zero on $E(\Q)$; nevertheless, the cyclotomic height (with respect to a choice of splitting) gives an equation whose zero set is, in general, not identical to $E(\Q_p)_{\mathrm{tors}}$, i.e. to the zero set of the $p$-adic logarithm on $E(\Q_p)$. See \cite{BCKW12,bia:QCKim} for computational evidence and relations with a conjecture of Kim on effectiveness.

This phenomenon in rank $0$ also provides us with an example of how different choices of splittings of the de Rham cohomology can lead to linearly independent equations. Indeed, recall from Section \ref{S:padic_hts} that the cyclotomic $p$-adic height pairing on $E/\Q$ depends on a choice of subspace $W\subset \hdr^1(E/\Q_p)$, complementary to the space of holomorphic form and isotropic with respect to the cup product pairing. The latter condition is automatically satisfied in the elliptic curve case and we see that the local height at $p$ is then given by
\begin{equation*}
h_p^{(W)}(z) = 2D_2(z)+c_W\left(\int_{\infty}^{z}\omega_0\right)^2\,,
\end{equation*}
where $D_2(z)$ is the dilogarithm (a double Coleman integral), $c_W\in \Q_p$ depends on $W$, and $\int_{\infty}^{z}\omega_0$ is the $p$-adic logarithm. Choosing two different subspaces $W_1,W_2$ we then get $h_p^{(W_1)}(z)=0=h_p^{(W_2)}(z)$ if and only if $h_p^{(W)}(z)=0=\int_{\infty}^{z}\omega_0$ for any $W$. Thus, for all $z\in \mathcal{U}(\Z_p)$ such that $h_p^{(W_1)}(z)=0$ but $\int_{\infty}^{z}\omega_0\neq 0$, we have $h_p^{(W_2)}(z)\neq 0$; examples of such $z$ can be found in the computations for \cite{BCKW12,bia:QCKim}.

In contrast, when $r=g$, different splittings will not result in independent equations, see~\cite[Remark 3.12]{BDMTV}.
\end{remark}

\section{Quadratic Chabauty for rational points on bielliptic curves over number
fields}\label{sec:rat}
Let $X/K$ be the genus $2$ bielliptic curve
\begin{equation*}
y^2 = a_6x^6 + a_4x^4+a_2x^2+a_0\qquad (a_i\in \OO_K)
\end{equation*}
which has degree $2$ maps $\varphi_1$, $\varphi_2$ to two elliptic curves:
\begin{align}
\label{eq:ell_of_biell}
E_1&\colon y^2=x^3+a_4x^2+a_2a_6x+a_0a_6^2,\qquad &&\varphi_1(x,y) = (a_6x^2,a_6y)\\
E_2&\colon y^2=x^3+a_2x^2+a_4a_0x+a_6a_0^2, &&\varphi_2(x,y) = (a_0x^{-2},a_0yx^{-3}). \nonumber
\end{align}
When $K$ is either $\mathbb{Q}$ or an imaginary quadratic field and both $E_1$ and $E_2$
have Mordell-Weil rank $1$ over $K$, it was shown in \cite{BD18} that a suitable choice of continuous idele class character of $K$ gives rise to a locally analytic function on $X(K_{j})$ which vanishes on $X(K)$, provided that $K_{j}\simeq \Q_p$ and $X$ has good reduction at each $\fp_j$.

In this section, we combine the ideas of \cite{BD18} with those of Section \ref{S:qcnf} to approximate $X(K)$ inside $X(K\otimes \mathbb{Q}_p)$ when $K$ is an arbitrary number field and $p$ is a prime, unramified in $K$, above which the given model for $X$ has everywhere good reduction. As in Sections \ref{S:cnf} and \ref{S:qcnf}, we do not investigate here whether the systems of $d$ equations in $d$ variables that we define have finitely many zeros.

For ease of exposition, and since we assumed in Section \ref{S:cnf} that $X$ possesses a $K$-rational point, we now restrict to curves $X$ for which the defining polynomial is monic, i.e.\ for the rest of this section we take
\[a_6 = 1.\]
 Then $X$ can be embedded into its Jacobian $J$ via the Abel-Jacobi map with respect to one of $\infty^{\pm}=(1: \pm 1: 0)\in X(K)$.
 The proof of the main theorem (Theorem \ref{thm:bielliptic} below) can easily be adapted to allow for any $a_6\in \OO_K$.

For the construction, it is sufficient to work under Assumption \ref{ass:E1E2} below and
\begin{equation}
\label{eq:rkgenusbielliptic}
\rk(J/K)+ r_K \leq dg =2d.
\end{equation} 
 
For each $A\in\{J, E_1, E_2\}$, let 
\[
  f^{A}_0,\ldots,f^{A}_{dg(A)-1} \colon  A(K)\otimes_{\Z}\Q_p \to \Q_p\,
\]
be the $\Qp$-valued linear functionals of Section \ref{S:cnf} on $A$ (here by $g(J)$ we mean the genus of $X$) and define, for $0\leq i\leq j\leq dg(A)-1$, the bilinear form $g_{ij}^A$ by
\[
g_{ij}^{A}(P,Q) = \frac{1}{2}(f_i^A(P)f_j^A(Q)+f_j^A(Q)f_i^A(P)),\quad \text{for } P,Q\in A(K)\otimes \Q_p.
\]
Let $\chi$ be a nontrivial continuous idele class character of $K$ and for each $k\in\{1,2\}$ let $h^{\chi, E_k}$ be the global $p$-adic height pairing of Section \ref{S:hts} on $E_k$ with respect to $\chi$. Similarly, for a prime $\fq$ of $K$, denote by $h_{\fq}^{\chi, E_k}$ the local height at $\fq$. We work under the following assumption:
\begin{assumption}
\label{ass:E1E2}
Condition \ref{C:crucial} holds for each of $E_1$ and $E_2$. For this, it is necessary that 
\begin{equation*}
\rk(\tilde{E_k}/F)\leq [F:\mathbb{Q}] \quad \text{for}\ k\in\{1,2\}
\end{equation*}
for all subfields $F$ of $K$ and for all $\tilde{E_k}/F$ isomorphic to $E_k$ over $K$.
\end{assumption}
Possibly after reordering the $g_{ij}^{E_k}$, we can then find $$\alpha_{ij}^{\chi,E_k}, \quad 0\leq i\leq j\leq \rk(E_k/K)-1,$$  such that
\begin{equation}
\label{eq:def_alphaij}
h^{\chi, E_k} = \sum_{i,j}\alpha_{ij}^{\chi,E_k}g_{ij}^{E_k}.\
\end{equation}
By abuse of notation, we also write $h^{\chi, E_k}(P)$ for $h^{\chi, E_k}(P,P)$ and similarly for $g_{ij}^{E_k}$. Embed $E_k(K)$ in $E_k(K\otimes \Q_p)$ via $\sigma$.
Note that, while $h^{\chi, E_k}$ only makes sense as a function of $E_k(K)$, each $g_{ij}^{E_k}$ is the restriction of a function on $E_k(K\otimes\Q_p)$, which we also denote by $g_{ij}^{E_k}$. Furthermore, if $\zvec\in E_k(K\otimes \mathbb{Q}_p)$, we write $h^{\chi, E_k}_{\mathfrak{p}_j}(\zvec)$ for $h^{\chi, E_k}_{\mathfrak{p}_j}(z_j)$, which is locally analytic away from the disk of the point at infinity $\infty_{E_k}$. 

Let $Q_1 = (0,\sqrt{a_0})\in E_1(K(\sqrt{a_0}))$, $Q_2 = (0,a_0)\in E_2(K)$ and, for a point $P\in X(K_j)$, denote by $]P[$ the residue disk centered at $P$ in $X(K_j)$. 
For $k=1,2$, define
\begin{align*}
 X^{(k)}(K\otimes \Q_p)&=\prod_{j=1}^m \left(X(K_{j})\setminus \left(]\sigma_j(\varphi_k^{-1}(Q_k))[\ \cup\ ]\sigma_j(\varphi_k^{-1}(-Q_k))[\right)\right)\\
  X^{(k)}(K) &=\sigma(X(K))\cap X^{(k)}(K\otimes \Q_p),
 \end{align*}
 where, if $Q_1$ is not defined over $K$, we let $\sigma_j(\varphi_1^{-1}(\pm Q_1)) = (0,\pm \sqrt{\sigma_j(a_0)})$ if $a_0\in \OO_j^2$ and $]\sigma_j(\varphi_1^{-1}(\pm Q_1))[ = \emptyset$ otherwise.

If $a_0$ is not a square in $K$, we need to extend the local and global heights on $E_1/K$ to $E_1/L$ where $L=K(\sqrt{a_0})$.  Let $\chi^{\prime}$ be the continuous idele class character of
$L$ defined by composing the trace maps of $\chi$ with the field traces $L_{\mathfrak{p}^{\prime}}/K_{\mathfrak{p}_j}$ at each $\mathfrak{p}^{\prime}\mid \mathfrak{p}_j$ and normalised so that it restricts to $2\chi$ on $\mathbb{A}_K^\ast/K^\ast$. Write $$h^{\chi,E_1}(Q_1)=\frac{1}{2}h^{\chi',E_1}(Q_1)$$ and, for a prime $\fq$ of $K$, $$h^{\chi,E_1}_{\fq}(Q_1)=\frac{1}{[L_{\fq^{\prime}}:K_{\fq}]}h_{\fq^{\prime}}^{\chi',E_1}(Q_1),$$ where $\fq^{\prime}$ is any prime of  $L$ above $\fq$. Since $\mathrm{Gal}(L/K)$ acts on $Q_1$ by multiplication by $\pm 1$ and by definition of $\chi^{\prime}$, the quantity $h^{\chi,E_1}_{\fq}(Q_1)$ is well-defined and all properties of $h^{\chi,E_1}_{\fq}$ for points over $K$ hold for $Q_1$.

The following result is a version of Theorem~\ref{biell_intro} which is computationally more useful, since the function of Theorem~\ref{biell_intro} is defined only on a proper subset of $X^{(k)}(K\otimes \Q_p)$ for each $k$.
\begin{theorem}
\label{thm:bielliptic}
For each $k\in\{1,2\}$, there exists an explicitly computable finite set $T^{\chi,k}\subset \Q_p$ such that the locally analytic function $\rho^{\chi,k}\colon X^{(k)}(K\otimes \Q_p)\to \Q_p$, defined by
\begin{align*}
\rho^{\chi,k}(\zvec) \colonequals  \sum_{j=1}^m\left(2h_{\fp_j}^{\chi,E_{3-k}}(\varphi_{3-k}(z_j))-h^{\chi, E_k}_{\fp_j}(\varphi_k(z_j)+Q_k)-h^{\chi,E_k}_{\fp_j}(\varphi_k(z_j)-Q_k)\right)\\
-2\sum_{i,j}\alpha_{ij}^{\chi,E_{3-k}}g_{ij}^{E_{3-k}}(\varphi_{3-k}(\zvec))+2\sum_{i,j}\alpha_{ij}^{\chi,E_k}g_{ij}^{E_{k}}(\varphi_k(\zvec))+2h^{\chi,E_k}(Q_k),
\end{align*}
takes values in $T^{\chi,k}$ when restricted to $X^{(k)}(K)$.
\end{theorem}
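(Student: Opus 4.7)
The plan is to mimic, for each of $E_1$ and $E_2$ separately, the argument behind Theorem~\ref{T:rhos}, and to assemble the pieces via the parallelogram identity for the global $p$-adic height pairings. Fix $z\in X^{(k)}(K)$. For $E_{3-k}$, the global decomposition $h^{\chi,E_{3-k}}=\sum_v h^{\chi,E_{3-k}}_v$ combined with Assumption~\ref{ass:E1E2} and \eqref{eq:def_alphaij} yields
\[
2\sum_{j=1}^m h^{\chi,E_{3-k}}_{\fp_j}(\varphi_{3-k}(z))-2\sum_{i,j}\alpha^{\chi,E_{3-k}}_{ij}g^{E_{3-k}}_{ij}(\varphi_{3-k}(z))=-2\sum_{\fq\nmid p}h^{\chi,E_{3-k}}_\fq(\varphi_{3-k}(z)).
\]
For $E_k$, I would apply the same decomposition to the inputs $\varphi_k(z)\pm Q_k$, together with the parallelogram identity
\[
h^{\chi,E_k}(\varphi_k(z)+Q_k)+h^{\chi,E_k}(\varphi_k(z)-Q_k)=2h^{\chi,E_k}(\varphi_k(z))+2h^{\chi,E_k}(Q_k),
\]
which is immediate from symmetric bilinearity of $h^{\chi,E_k}$, and Assumption~\ref{ass:E1E2} rewriting $h^{\chi,E_k}(\varphi_k(z))$ as $\sum_{i,j}\alpha^{\chi,E_k}_{ij}g^{E_k}_{ij}(\varphi_k(z))$. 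When $a_0\notin K^{\ast 2}$, the definitions of $h^{\chi,E_1}(Q_1)$ and $h^{\chi,E_1}_\fq(Q_1)$ via the extension $\chi'$ on $L=K(\sqrt{a_0})$ are tailored precisely so that the decomposition formula and the parallelogram identity continue to hold at $Q_1$.

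After collecting all terms, the global heights, the bilinear corrections $\alpha^{\chi,E_k}_{ij}$ and $\alpha^{\chi,E_{3-k}}_{ij}$, and the constant $2h^{\chi,E_k}(Q_k)$ cancel exactly, leaving
\[
\rho^{\chi,k}(\sigma(z))=\sum_{\fq\nmid p}\Bigl(h^{\chi,E_k}_\fq(\varphi_k(z)+Q_k)+h^{\chi,E_k}_\fq(\varphi_k(z)-Q_k)-2h^{\chi,E_{3-k}}_\fq(\varphi_{3-k}(z))\Bigr).
\]
The final step, which I expect to be the main obstacle, is to verify that this expression takes only finitely many, explicitly computable, values as $z$ varies. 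For this I would invoke the elliptic curve analogue of \cite[Proposition~3.3]{BBM14}, whose hyperelliptic version already underlies the proof of Theorem~\ref{T:rhos}: at any prime $\fq\nmid p$, the local $p$-adic height $h^{\chi,E}_\fq$ of a degree zero divisor depends only on the irreducible components of the special fibre of a regular model of $E$ to which it reduces, via the intersection pairing~\eqref{away_disjoint}, and vanishes when those components are the identity component and $\chi_\fq$ is suitably unramified. Since $E_1$ and $E_2$ have only finitely many primes of bad reduction, the reductions of $\varphi_k(z)\pm Q_k$ and $\varphi_{3-k}(z)$ can take only finitely many values across the components of the relevant N\'eron models, and hence the sum over $\fq\nmid p$ lies in a finite explicitly computable subset $T^{\chi,k}\subset\Q_p$.

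The main bookkeeping difficulties in this last step are tracking the reductions simultaneously on both elliptic curves, ensuring the globally consistent choice of tangent vectors required in \S\ref{S:padic_hts} for the inputs $\varphi_k(z)\pm Q_k$ and $\varphi_{3-k}(z)$, and, when $a_0$ is not a square in $K$, showing that the averaging over $\mathrm{Gal}(L/K)$ built into the definition of $h^{\chi,E_1}_\fq(Q_1)$ indeed produces a finite set of values at points defined over $K$. Once this is verified, local analyticity of $\rho^{\chi,k}$ on $X^{(k)}(K\otimes\Q_p)$ follows from the corresponding analyticity of the local heights $h^{\chi,E}_{\fp_j}$ on the complement of the disks around $\varphi_k^{-1}(\pm Q_k)$ (resp.\ $\infty_{E_{3-k}}$), and the theorem follows.
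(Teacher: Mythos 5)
The algebraic reduction in your first three paragraphs is correct and goes through: using the global decomposition $h^{\chi,E}=\sum_v h^{\chi,E}_v$, Assumption~\ref{ass:E1E2}, and the global parallelogram identity
\[
h^{\chi,E_k}(\varphi_k(z)+Q_k)+h^{\chi,E_k}(\varphi_k(z)-Q_k)=2h^{\chi,E_k}(\varphi_k(z))+2h^{\chi,E_k}(Q_k)
\]
does indeed give
\[
\rho^{\chi,k}(\sigma(z))=\sum_{\fq\nmid p}\Bigl(h^{\chi,E_k}_\fq(\varphi_k(z)+Q_k)+h^{\chi,E_k}_\fq(\varphi_k(z)-Q_k)-2h^{\chi,E_{3-k}}_\fq(\varphi_{3-k}(z))\Bigr),
\]
and this part is an acceptable (and essentially equivalent) alternative to summing the \emph{local} quasi-parallelogram identities from \cite[Lemma~7.4]{BD18}, which is what the paper does and re-uses later.

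The genuine gap is in the finiteness argument. You assert that at each $\fq\nmid p$ the local height $h^{\chi,E}_\fq$ is bounded by an appeal to component groups of the N\'eron model, i.e.\ to an analogue of~\cite[Proposition~3.3]{BBM14}. But that result only controls local heights at \emph{integral} points of $E$, and the points $\varphi_k(z)\pm Q_k$ and $\varphi_{3-k}(z)$ for $z\in X(K)$ are \emph{not} integral in general. Indeed, if $x(P)\notin\OO_\fq$ one has $h^{\chi,E_\ell}_\fq(P)=\chi_\fq(x(P))$, which is unbounded as $\ord_\fq(x(P))\to -\infty$; a point with very negative $\ord_\fq(x)$ still reduces to the identity component, so the component group alone gives no control. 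Consequently the individual terms in the sum over $\fq\nmid p$ are not bounded, and your argument does not show that the set $T^{\chi,k}$ is finite.

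What is actually true, and what the paper proves, is that the \emph{combination} is bounded, and this requires two specific ingredients you have not used: (i) the local quasi-parallelogram law applied once more to rewrite $w_\fq^{\chi,E_k}(z)$ as
$2\bigl(h^{\chi,E_k}_\fq(\varphi_k(z))+h^{\chi,E_k}_\fq(Q_k)+\chi_\fq(x(\varphi_{3-k}(z)))-\chi_\fq(a_0)-h^{\chi,E_{3-k}}_\fq(\varphi_{3-k}(z))\bigr)$, and (ii) the identity $x(\varphi_1(z))\,x(\varphi_2(z))=a_0$, which implies that at any $\fq$ at most one of $x(\varphi_1(z)),x(\varphi_2(z))$ is non-integral and that the unbounded contributions $\chi_\fq(x(\varphi_k(z)))$ cancel exactly against the surviving $\chi_\fq$-term. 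A case analysis on $\ord_\fq(x(z))$ relative to $\ord_\fq(a_0)$ then reduces everything to finitely many values. Without this cancellation mechanism, the claim that $T^{\chi,k}$ is finite is unsupported.
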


\begin{proof}
Let $\fq$ be any prime of $K$ and $\ell\in\{1,2\}$. By \cite[Lemma 7.4]{BD18}, the local height $h_{\fq}^{\chi, E_{\ell}}$ satisfies the quasi-parallelogram law, i.e.\ for all points $P,R\in E_{\ell}(K_\fq)$ such that $P,R,P\pm R\neq \infty_{E_{\ell}}$, we have
\begin{equation}
\label{eq:quasi_par}
h_{\fq}^{\chi, E_{\ell}}(P+R)+h_{\fq}^{\chi, E_{\ell}}(P-R)=2h_{\fq}^{\chi, E_{\ell}}(P)+2h_{\fq}^{\chi, E_{\ell}}(R)-2\chi_{\fq}(x(R)-x(P)).
\end{equation}
This, together with (\ref{eq:def_alphaij}), implies that $\rho^{\chi,k}(X^{(k)}(K))$ is contained in 
\begin{align*}
T^{\chi,k} \colonequals  \biggl\{\sum_{\fq\nmid p}\left(h^{\chi,E_k}_{\fq}(\varphi_k(z_{\fq})+Q_k)+h^{\chi,E_k}_{\mathfrak{q}}(\varphi_k(z_{\fq})-Q_k)-2h^{\chi, E_{3-k}}_{\fq}(\varphi_{3-k}(z_{\fq}))\right):&\\
(z_{\fq})\in \prod_{\fq\nmid p}X(K_{\fq})\setminus \{\varphi_k^{-1}(\pm Q_k)\}&\biggr\}.
\end{align*}
The elementary proof of finiteness of $T^{\chi,k}$ given in \cite[Proposition 6.5]{bia:QCKim} when $K=\mathbb{Q}$ and $\chi$ is the cyclotomic character uses properties of the local heights away from $p$ that hold also in this more general setting. Any detail that is omitted here can thus be deduced from \cite{bia:QCKim}. Let $\mathfrak{q}\nmid p$ and define
\begin{equation*}
W_{\mathfrak{q}}^{\chi,E_{\ell}} = \{h_{\mathfrak{q}}^{\chi,E_{\ell}}(P)\,:\,P\in E_{\ell}(K_{\mathfrak{q}}), x(P)\in \OO_{\mathfrak{q}}\}.
\end{equation*}
As in the proof of Theorem \ref{T:rhos}, $W_{\mathfrak{q}}^{\chi,E_{\ell}}$ is finite and identically zero for almost all $\mathfrak{q}$. On the other hand, if $P\in E_{\ell}(K_{\fq})$ with $x(P)\not\in \OO_{\fq}$, then
\begin{equation}
\label{eq:height_non_int}
h_{\fq}^{\chi,E_{\ell}}(P) = \chi_{\fq}(x(P)).
\end{equation}

Let $z\in X(K_{\fq})\setminus \{\varphi_k^{-1}(\pm Q_k)\}$ and define
\begin{equation*}
w_{\fq}^{\chi,E_{k}}(z) =h^{\chi,E_k}_{\fq}(\varphi_k(z)+Q_k)+h^{\chi,E_k}_{\mathfrak{q}}(\varphi_k(z)-Q_k)-2h^{\chi, E_{3-k}}_{\fq}(\varphi_{3-k}(z)).
\end{equation*}
If $\varphi_k(z)=\infty_{E_k}$, then $w_{\fq}^{\chi,E_{k}}(z)= 2h^{\chi,E_k}_{\fq}(Q_k)-2h^{\chi,E_{3-k}}_{\fq}(Q_{3-k})$; otherwise, by (\ref{eq:quasi_par}), we have
\begin{align*}
w_{\fq}^{\chi,E_{k}}(z)= 2\left(h^{\chi,E_k}_{\fq}(\varphi_k(z))+h^{\chi,E_k}_{\fq}(Q_k)-\chi_{\fq}(x(\varphi_k(z)))-h^{\chi, E_{3-k}}_{\fq}(\varphi_{3-k}(z))\right)\\
 = 2\left(h^{\chi,E_k}_{\fq}(\varphi_k(z))+h^{\chi,E_k}_{\fq}(Q_k)+\chi_{\fq}(x(\varphi_{3-k}(z)))-\chi_{\fq}(a_0)-h^{\chi, E_{3-k}}_{\fq}(\varphi_{3-k}(z))\right).
\end{align*}
If $0\leq \ord_{\fq}(x(z))\leq \ord_{\fq}(a_0)/2$, then both $x(\varphi_1(z))$ and $x(\varphi_2(z))$ are integral and
\begin{equation*}
\chi_{\fq}(x(\varphi_1(z)))\in \{n\chi_{\fq}(\pi_{\fq}): 0\leq n\leq \ord_{\fq}(a_0)\}.
\end{equation*}
 In the remaining cases, exactly one of $x(\varphi_{1}(z))$ and $x(\varphi_{2}(z))$ is non-integral. Thus, by (\ref{eq:height_non_int}) and finiteness of $W_{\fq}^{\chi,E_{\ell}}$, we deduce that $w_{\fq}^{\chi,E_{k}}(z)$ takes values in a finite set $T_{\fq}^{\chi,k}$. Furthermore, since $Q_{\ell}$ is an integral point and $a_0$ divides the discriminant of $E_2$, $T_{\fq}^{\chi,k}\subseteq \{0\}$ for all primes $\fq$ that divide neither the discriminant of $E_1$ nor the one of $E_2$.
\end{proof}

Assuming Leopoldt's conjecture, we have $r_2+1$ choices of independent characters $\chi$ and then Theorem \ref{thm:bielliptic} gives at most $r_2+1$ independent locally analytic functions on each $X^{(k)}(K\otimes \Q_p)$ vanishing on the global points $X^{(k)}(K)$. As in Section \ref{S:qcnf}, we construct  $r_K$ other functions using the relations among the functionals $f_i^{J}$ imposed by our assumption (\ref{eq:rkgenusbielliptic}). In practice, each such relation can be reduced to a relation among the $f_i^{E_k}$ for some $k$.

\begin{remark}
As soon as $m>1$ and $a_0\in \OO_j^2$ for some $j$,  we have $$X^{(1)}(K\otimes\Q_p)\cup X^{(2)}(K\otimes \Q_p)\subsetneq
  X(K\otimes\Q_p).$$ Thus, in order to turn the method outlined in this section into a
  strategy to compute $X(K)$, one also needs to deal with the residue disks that are not
  covered by Theorem~\ref{thm:bielliptic}. If, for instance, there are no $K$-rational
  points in these disks, then we can use the Mordell-Weil sieve to prove this. 
  See also Remark \ref{rmk:twice_over_Q} for a class of examples where we can immediately prove that there exist no $K$-rational points in such disks.
\end{remark}

\begin{remark}
\label{rmk:twice_over_Q}
A quadratic Chabauty computation over number fields can sometimes be replaced with several quadratic Chabauty computations over $\Q$. For example, suppose that $X$ is defined over $\Q$ and that we want to determine $X(K)$, where $K=\Q(\sqrt{d})$ for some square-free $d\in\Q$. Assume that $\rk(E_1/\Q)=\rk(E_2/\Q)=1$, $\rk(E_1/K)=2$ and $E_2(K) =E_2(\Q)$. If $P\in X(K)$, then $\varphi_2(P)\in E_2(K)=E_2(\Q)$ and so $x(P)^2\in \Q$, $y(P)/x(P)\in \Q$. It follows that computing $X(K)$ is equivalent to computing $X(\Q)$ and $X^{\prime}(\Q)$ where
\[
X^{\prime}\colon y^2=x^6+\frac{a_4}{d}x^4+\frac{a_2}{d^2}x^2+\frac{a_0}{d^3}.
\]
The curve $X^{\prime}$ is a genus $2$ bielliptic curve whose corresponding elliptic curves are the quadratic twists $E_1^{d^{-1}}$ and $E_2^{d^{-2}}\cong_{\Q} E_2$. In particular, $\rk(E_1^{d^{-1}}/\Q)=1$ and $\rk(E_2^{d^{-2}}/\Q)=1$. Quadratic Chabauty can thus be used to determine both $X(\Q)$ and $X^{\prime}(\Q)$.

We also note that, even when $p$ is split in $K$, in this example we have $$\sigma(X(K))= X^{(1)}(K)\cup X^{(2)}(K).$$
Indeed, $X(K\otimes \Q_p)\setminus (X^{(1)}(K\otimes \Q_p)\cup X^{(2)}(K\otimes \Q_p))$ consists of those points $(z_1,z_2)\in X(\Q_p)\times X(\Q_p)$ such that $\overline{z_i}\in \{\overline{(0,\pm \sqrt{a_0})}\}$, $z_j \in \{\overline{\infty^{\pm}}\}$ if $\{i,j\} = \{1,2\}$, where $\sqrt{a_0}$ is a fixed square root of $a_0$ in $\Q_p$. Suppose that $(z_1,z_2)=\sigma(z)$ for some $z\in X(K)$. Then $x(z_1)^2 = x(z_2)^2$ since $\varphi_2(z)\in E_2(\Q)$. In particular, $\ord_p(x(z_1))=\ord_p(x(z_2))$: a contradiction.

\end{remark}

\section{Algorithms and examples for integral points over quadratic fields}\label{S:alg_ex_int}
To illustrate our method, we give detailed algorithms for carrying out the process described in 
Section~\ref{S:qcnf} for elliptic curves over real 
and imaginary quadratic fields and curves of genus 2 over imaginary
quadratic fields. We then apply these algorithms in several examples.
Because of current limitations of our implementation, we restrict to the case where $p$ is 
split in $K$, so that the Coleman integration takes place over $\Q_p$. 
The extension to more general good primes $p$ is straightforward.

We keep the notation of Section~\ref{S:qcnf}.
Let $K$ be a quadratic number field and let $p$ be a prime number such that $p\OO_K = \fp_1\fp_2$ is split.
Let $\chi_1, \ldots,\chi_{r_2+1}\in V_K$ be independent continuous idele class characters.
By Example~\ref{ex:split}, they correspond to
pairs $c^{(k)} =
(c^{(k)}_{1},c^{(k)}_{2}) \in \Q_p^2$ such that 
$h^{\chi_k}_{\fp_j}(z) = c^{(k)}_j\cdot \tau_j(z)$ for all $j \in \{1,2\}$ and $z\in
X(K_j)\setminus\{\infty\}$. 
Writing
\[
  h^{(k)} \colonequals h^{\chi_k} = \sum_{0\le i\le j \le r-1} \alpha^{(k)}_{ij}g_{ij}\,,
\]
we get
\begin{equation}\label{rho_split}
  \rho_k(\zvec) \colonequals\rho^{\chi_k}(\zvec) =  \sum^d_{j=1}c^{(k)}_j\tau_j(z_j) -
  \sum_{i,j}\alpha_{ij}^{(k)}f_i(\zvec)f_j(\zvec)
\end{equation}
for $\zvec \in \UU(\OO_K\otimes\Z_p)$.

Algorithms for the case $K=\Q$ are discussed in great detail in \cite{BBM16}; 
they readily generalize to more general number fields $K$ provided $p$ is totally split.
For instance, it is easy to see that the sets $T^{(k)}$ can be computed in practice 
using the method of~\cite[\S3.4]{BBM16}. 
The $p$-adic heights can be computed using a straightforward adaptation of~\cite[Algorithm
3.8]{BaMuSt12}, and we deduce the constants $\alpha_{ij}^{(k)}$ as in~\cite[\S3.2]{BBM16}.
For the $p$-adic analytic computations we use {\tt Sage}~\cite{sage}, the remaining computations are
done in {\tt Magma}~\cite{magma}.

\begin{remark}\label{R:prec_ana}
Since~\cite[\S3]{BBM16} contains a detailed precision analysis for each of these
algorithms, we deduce that the $p$-adic objects in our results above are
indeed explicitly computable in the sense described in the introduction; they are
provably computable to any desired finite precision.
\end{remark}

We want to construct two functions $\rho_1,\rho_2$ that map
\[\sigma(\UU(\OO_K)) \subset \UU(\OO_K\otimes \Z_p)\]
into an explicitly computable finite set $T^{(1)}$ ($T^{(2)}$, respectively)
and have a finite common solution set. 
Once we have done this, we have to check if each solution $\zvec=(z_1, z_2)$ 
actually comes from a $K$-rational point, that is, whether $z_1 = \sigma_1(Q)$ and $z_2 =
\sigma_2(Q)$ for some $Q \in \UU(\OO_K)$. 
We can use, for instance, that if this is the case, then we must have $x(z_1)x(z_2) \in \Q$ 
and $x(z_1) + x(z_2) \in \Q$.

The functions $f^{(j)}_{i}\colon \UU(\OO_K\otimes \Z_p)\to \Q_p$ are given by
\[
  f^{(j)}_i(\zvec) = \int^{z_j}_{\infty} \omega_i\,;
\]
the corresponding linear functionals in $(J(K)\otimes\Q_p)^\vee$ will also be denoted by 
$  f^{(j)}_i$.

\subsection{Elliptic curves over real quadratic fields}\label{sec:real}
Let 
$E/K$ 
denote an elliptic curve over a real quadratic field\footnote{\label{ft:also_imaginary}The algorithm described in this subsection
works also when $K$ is imaginary; however, while in the real case the rank assumption here
is optimal, in the imaginary case we can work with curves of larger rank, as is explained
in \S \ref{sec:ell_imag}.} $K$ given by a Weierstrass equation\footnote{The setup in the
previous section required an equation of the form $y^2=f(x)$, but for elliptic curves we can
extend it easily to cover general Weierstrass equations.}
   with good reduction at
both $\fp_1$ and $\fp_2$. 
We set $f_0 \colonequals f^{(1)}_0$ and $f_1 \colonequals f^{(2)}_0$.
Equation~\eqref{ranks} is satisfied if and only if $r =\rk(E/K)=0$ or $1$; we assume that $r=1$.
It follows that there is a linear dependence $f_0=bf_1$
between the functionals $f_0$ and $f_1$ on $E(K)\otimes \Q_p$ 
The first function $\rho_1$ is constructed using the cyclotomic $p$-adic height pairing
$\hc$. By~\eqref{rho_split}, we have 
\[
  \rho_1(\zvec)  \colonequals \rc(\zvec) =  \tau_1(z_1)+\tau_2(z_2)
  -\alpha f_0(\zvec)^2,
\]
where 
\[
  \hc = \alpha g_{00} = \alpha f_0^2\,,
\]
that is, $\alpha = \alpha^{(1)}_{00}$.
We can compute $\alpha$ by comparing $\hc(Q)$ and $f_0^2(Q)$ for some nontorsion point $Q
\in E(K)$.
Letting $T  \colonequals  T^{(1)}$ denote the finite set of values that $-\sum_{\fq \nmid p} \hc_{\fq}(z_{\fq}-\infty)$ can
take for $(z_{\fq}) \in \prod_{\fq\nmid p}\UU(\OO_{K_\fq})$,
we then must have
\[
  \rho_1(\sigma_1(Q),\sigma_2(Q)) \in T
\]
for all $\OO_K$-integral points $Q \in E(K)$.

The second function comes from the dependence between $f_0$ and $f_1$ and is given by
\[
  \rho_2(\zvec) = 
\int^{z_1}_\infty\omega_0 -b\int^{z_2}_\infty\omega_0\,;   
\]
we have $ \rho_2(\sigma_1(Q),\sigma_2(Q)) =0$  for all $Q \in E(K)$.
We can use any nontorsion $Q \in E(K)$ and compare $f_0(Q)$ to $f_1(Q)$ to find $b$.

Note that, if $E$ is defined over $\Q$ and $w\in T$, we can always reduce the system 
\begin{equation*}
\begin{cases}
\rho_1(z_1,z_2)=w\\
\rho_2(z_1,z_2)=0
\end{cases}
\end{equation*}
to one equation in one variable. Indeed, first observe that
\begin{itemize}
\item if $\rk(E/\Q)=\rk(E/K)=1$, then for every $Q\in E(K)$ there exists $n$ such that $nQ\in E(\mathbb{Q})$. Thus, by linearity, $f_0(Q)=f_1(Q)$;
\item if $\rk(E/\Q)=0$, then for every $Q\in E(K)$ we have $Q+Q^{c}\in E(\mathbb{Q}) = E(\mathbb{Q})_{\mathrm{tors}}$, where $Q^c$ is the Galois conjugate of $Q$. We deduce that $f_0(Q)+f_1(Q)=f_0(Q+Q^c)=0$.
\end{itemize}
Furthermore,
\begin{equation*}
\int_{\infty}^{z}\omega_0=\int_{\infty}^{z'}\omega_0\iff z-z'\in E(\Q_p)_{\mathrm{tors}}.
\end{equation*}
Suppose that $\rk(E/\Q)=1$ and that we want to find all $Q\in \UU(\OO_K)$ such that $(\overline{\sigma_1(Q)},\overline{\sigma_2(Q)})=(\overline{P_1},\overline{P_2})\in \overline{E}(\F_p)\times \overline{E}(\F_p)$. If there are no torsion points in $E(\Q_p)$ reducing to $\overline{P_1}-\overline{P_2}$, then the above shows that no such $Q$ exists. Otherwise, we solve
\[
\rho_1((x(s),y(s))
, (x(s),y(s))+R) =w\]
where $(x(s),y(s))$ is a parametrization for the points in $E(\Q_p)$ reducing to $\overline{P_1}$ and $R$ is the unique torsion point\footnote{Since $p$ is
odd, the group $E_1(\mathbb{Q}_p)$ of $\mathbb{Q}_p$-points on the formal group at $p$ is torsion-free. Therefore, $E(\mathbb{Q}_p)_{\mathrm{tors}}$ can be realized as a subgroup of $\overline{E}(\mathbb{F}_p)$.} in $E(\mathbb{Q}_p)$ such that $\overline{P_2}-\overline{P_1}=\overline{R}$ (cf. Example \ref{ex:E_real_over_Q}). The case $\rk(E/\Q)=0$ is very similar.

\begin{example}
\label{ex:E_over_K}
Consider the elliptic curve \href{http://www.lmfdb.org/EllipticCurve/2.2.5.1/199.1/c/1}{199.1-c1} \cite{lmfdb} over $K=\Q(a)$, where $a$ is a root of $x^2 - x - 1$. It admits the global minimal model
\begin{equation*}
E\colon y^2+y=x^3+(a+1)x^2+ax.
\end{equation*}
The prime $p=11$ is the smallest prime splitting in $K$. The curve has everywhere good reduction, except at $(-3a+16)$, where it has Kodaira symbol $\mathrm{I}_1$: thus, $T=\{0\}$. The common zero set of $\rho_1$ and $\rho_2$ contains the integral points
\begin{align*}
\pm (-1,0), \pm (0,0), \pm (a,2a), \pm (-2a+3,4a-7), \pm (a+1,3a+1), \pm (-a,0),\\
 \pm (42a + 27, -420a - 259 ), \pm(-6a+8,-18a+29), \pm (-a + 1, a - 2)
\end{align*}
and $138$ other pairs of points in $\sigma_1(E)(\mathbb{Q}_p)\times \sigma_2(E)(\mathbb{Q}_p)$, where $\sigma_i(E)/\Q_p$ is the curve obtained from the Weierstrass equation for $E$ via the embedding $\sigma_i\colon K\hookrightarrow \Q_p$.
\end{example}

  \begin{example}
\label{ex:E_real_over_Q}
Consider the elliptic curve with LMFDB label \href{http://www.lmfdb.org/EllipticCurve/Q/192a1 /}{192.a3} \cite{lmfdb}
$$E \colon  y^2 = x^3 - x^2 - 4x - 2$$ and let $K = \Q(\sqrt{3})$. We have that $\rk (E/K)
    = \rk (E/\Q) = 1$ and that $p = 13$ is of good ordinary reduction for $E$ and is split in $K$.
As explained above, in this setting we can reduce our computations to finding solutions to power series equations in one variable. Note in particular that $\#\overline{E}(\mathbb{F}_p)=12\not\equiv 0 \bmod{p}$, so each residue disk contains precisely one torsion point of $E(\Q_p)$.

The only inputs in this computation which come from our specific choice of quadratic field
    $K$ are the reduction type of $E$ at the bad primes (and consequently the set $T$) and
    the choice of prime $p$, since we require that $p$ is split.

For this reason, it is not surprising that, besides recovering the integral points $$\pm(3,2), (-1,0), ( 1\pm \sqrt{3} , 0), \pm(25 \pm
15\sqrt{3}, 180 \pm 104\sqrt{3})\,,$$
we also recognize in our zero set the points
\begin{equation*}
\pm \left(\frac{-1\pm \sqrt{-3}}{2},  \frac{-3\pm\sqrt{-3}}{2}\right), \pm(-1\pm i, 2)\,,
\end{equation*}
all of which belong to the $2$-saturation of $E(\Q)$.
\end{example}

\subsection{Elliptic curves over imaginary quadratic fields}\label{sec:ell_imag}
The following brief discussion on elliptic curves over imaginary quadratic fields will be useful for the computation of rational points on bielliptic curves over imaginary quadratic fields of \S \ref{sec:rank_4_rational}.
Let $K$ be an imaginary quadratic number field and let $E$ be an elliptic curve defined over $K$ with $r=\rank(E/K)=2$, such that $E$ has good reduction at
both $\fp_1$ and $\fp_2$.
Suppose that $f_0$ and $f_1$
are linearly independent. Note that:
\begin{lemma}
\label{lemma:indep}
Suppose that $E$ as above can be defined over $\Q$. Then $f_0$ and $f_1$ are linearly independent if and only $\rk(E/\Q)=1$.
\end{lemma}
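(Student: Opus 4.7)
The plan is to exploit the action of the nontrivial element $c \in \mathrm{Gal}(K/\Q)$ on $E(K)$, which exists because $E$ is defined over $\Q$. Since $p$ splits in the imaginary quadratic field $K$, conjugation permutes $\fp_1$ and $\fp_2$; after identifying each $K_{\fp_j}$ with $\Q_p$ we may arrange so that $\sigma_2 = \sigma_1 \circ c$. Hence for every $Q \in E(K)$,
\[
  f_1(Q) = \int_\infty^{\sigma_2(Q)} \omega_0 = \int_\infty^{\sigma_1(c(Q))} \omega_0 = f_0(c(Q)),
\]
so that, after extending by linearity, $f_1$ is the pullback of $f_0$ by the involution of $E(K) \otimes \Q_p$ induced by $c$.

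Next, I would decompose $E(K) \otimes \Q_p$ into eigenspaces for this involution. The $+1$-eigenspace is $E(\Q) \otimes \Q_p$ (using $E(K)^{c} = E(\Q)$), of dimension $\rk(E/\Q)$, and so the $-1$-eigenspace has dimension $2 - \rk(E/\Q)$. If $\rk(E/\Q) = 2$, then $c$ acts as $+1$ on $E(K) \otimes \Q_p$, giving $f_1 = f_0$; if $\rk(E/\Q) = 0$, then $c$ acts as $-1$, giving $f_1 = -f_0$. In either case, $f_0$ and $f_1$ are linearly dependent.

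In the remaining case $\rk(E/\Q) = 1$, both eigenspaces are one-dimensional, so I pick a non-torsion $P \in E(\Q)$ and a non-torsion $R \in E(K)$ with $c(R) = -R$; together they span $E(K) \otimes \Q_p$. The matrix of $(f_0, f_1)$ in this basis is
\[
  \begin{pmatrix} f_0(P) & f_0(R) \\ f_0(P) & -f_0(R) \end{pmatrix},
\]
of determinant $-2 f_0(P) f_0(R)$. The only content beyond linear algebra is the nonvanishing of each factor, which reduces to the standard fact that the $p$-adic formal logarithm on $E(\Q_p)$ has kernel exactly $E(\Q_p)_{\mathrm{tors}}$, combined with the observation that the image in $E(\Q_p)$ of a non-torsion point in $E(K)$ is still non-torsion (it is non-torsion in $E(\Qpb)$). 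Hence the determinant is nonzero, and $f_0, f_1$ are linearly independent precisely when $\rk(E/\Q) = 1$, proving the equivalence.
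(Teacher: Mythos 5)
Your proof is correct, and the route is genuinely different in organization from the paper's. You package everything around the eigenspace decomposition of $E(K)\otimes\Q_p$ under the involution induced by the nontrivial element $c\in\mathrm{Gal}(K/\Q)$, observe $f_1 = f_0\circ c$, and then handle all three values of $\rk(E/\Q)$ uniformly: in the extreme cases $c$ acts as $\pm 1$ so $f_1 = \pm f_0$, and in the middle case you exhibit an explicit basis adapted to the eigenspaces and compute a $2\times 2$ determinant $-2\,f_0(P)f_0(R)$, reducing independence to nonvanishing of the $p$-adic elliptic logarithm on the nontorsion points $P$ and $R$. The paper instead treats the forward implication by ad hoc reasoning (for $\rk=2$ using that $E(\Q)$ has finite index, for $\rk=0$ using $Q+Q^c\in E(\Q)_{\mathrm{tors}}$) and proves the converse by contradiction: from $f_0 = \alpha f_1$ it deduces $\alpha = 1$ using a rational nontorsion point, then shows $Q - Q^c$ is torsion for all $Q$, which forces $\rk(E/K) = \rk(E/\Q)$. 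Both arguments hinge on the same two facts — the Galois action and the fact that the $p$-adic logarithm on $E(\Q_p)$ vanishes exactly on torsion — but your eigenspace formulation makes the dichotomy transparent and replaces the contradiction with a direct rank computation. One small point worth making explicit in your writeup: the existence of a nontorsion $R\in E(K)$ with $c(R) = -R$ is not automatic from the eigenspace decomposition of the $\Q_p$-vector space alone, but it does follow by taking $R = Q - Q^c$ for a $Q$ whose image in $E(K)/E(\Q)$ has infinite order (such $Q$ exists since $\rk(E/K) > \rk(E/\Q)$, and $R$ is nontorsion for the same reason the paper invokes).
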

\begin{proof}
If $\rk(E/\Q)=r$ (resp.\ $\rk(E/\Q)=0$), then $f_0=f_1$ (resp.\ $f_0=-f_1$) on $E(K)$ (cf.\ also the discussion before Example \ref{ex:E_over_K}). Conversely, suppose that $\rk(E/\Q)=1$ and that $f_0$ and $f_1$ are linearly dependent on $E(K)\otimes \Q_p$. Since neither of $f_0$ and $f_1$ is identically zero on $E(K)$ (as $E(K)$ contains points of infinite order), then there exists $\alpha\in\mathbb{Q}_p$ such that $f_0=\alpha f_1$. Let $P\in E(\mathbb{Q})$ of infinite order. Then $f_0(P)=f_1(P)$ and hence $\alpha=1$. Thus, for all $Q\in E(K)$, we have $\int_{\infty}^{\sigma_1(Q)}\omega_0 = \int_{\infty}^{\sigma_2(Q)}\omega_0$, so that $\sigma_1(Q-Q^{c})\in E(\mathbb{Q}_p)_{\mathrm{tors}}$, where $Q^c$ is the Galois conjugate of $Q$. Let $R\in E(K)_{\mathrm{tors}}$ such that $Q=Q^{c}+R$ and let $n$ be the additive order of $R$. Then $nQ = nQ^{c}$, i.e.\ $nQ\in E(\mathbb{Q})$, contradicting $\rk(E/K)>\rk(E/\Q)$.
\end{proof}

Under our assumption, the bilinear forms $g_{ij}$ ($0 \le i\le j\le 1$) generate $\Bil$, the space of $\Q_p$-valued bilinear forms on $E(K)\otimes \Q_p$.
Hence the $p$-adic heights $\hc$ and $\ha$ can be expressed as linear combinations 
\[
\hc =\sum \alpha^{\cyc}_{ij}g_{ij}\quad \text{and} \quad \ha =\sum \alpha^{\anti}_{ij} g_{ij}.\]
Using independent nontorsion points $P,Q\in E(K)$, we can find all $\alpha^{\cyc}_{ij}$ (resp.\
$\alpha^{\anti}_{ij}$) by
computing $f_0(P), f_1(P), f_0(Q), f_1(Q)$ and  $\hc(P), \hc(Q), \hc(P+Q)$ (resp.\ $\ha(P),
\ha(Q), \ha(P+Q)$). 

When $E$ is defined over $\Q$, we have $\alpha_{00}^{\cyc} =\alpha_{11}^{\cyc}$ and $\alpha_{01}^{\anti}=0$, $\alpha_{00}^{\anti}=-\alpha_{11}^{\anti}$. Indeed, for all $R_1,R_2\in E(K)$, we have $\hc(R_1,R_2)=\hc(R_1^c,R_2^c)$ and hence
\begin{equation*}
(\alpha^{\cyc}_{00}-\alpha^{\cyc}_{11})(g_{00}(R_1,R_2)-g_{11}(R_1,R_2)) =0;
\end{equation*}
the first claim then follows since the $g_{ij}$ are a basis for $\Bil$. Similarly, the anticyclotomic height pairing changes of sign if we first act by Galois, proving the second claim.

\subsection{Genus~2 curves, imaginary quadratic fields}\label{sec:g2imag}
Suppose that $X \colon y^2=f(x)$ has genus~2 and rank~4 over an imaginary quadratic field $K$.
Let $p$ be a split prime of good, ordinary reduction and let $\sigma_1,\sigma_2 \colon K
\hookrightarrow \Q_p$ denote the two embeddings corresponding to the primes $\fp_1,\fp_2$ of $K$ above $p$.
Then we have~4 functionals $f^{(j)}_i \colon J(K)\otimes_{\Z}\Q_p\to \Q_p$, extended linearly from the
functions 
  \[
    f^{(j)}_i(Q) = \int^{\sigma_j(Q)}_\infty \omega_i,\;
  \]
on $X(K)$,
which we assume to be independent.
For simplicity, we renumber them 
\[
  f_0 =f_0^{(1)},\,f_1 = f_0^{(2)},\, f_2 = f_1^{(1)},\,f_3 = f_1^{(2)}\,.
\]
As in the proof of Theorem~\ref{T:rhos}, the functions $g_{ij}(P,Q) =
\frac{1}{2}(f_i(P)f_j(Q)+f_i(Q)f_j(P))$, for $0\le i\le j \le 3$ 
form a basis of the bilinear forms on $J(K)\otimes_{\Z}\Q_p$.
We can express the $p$-adic height pairings $\hc$ and $\ha$ in terms of the $g_{ij}$:
\[
  \hc  = \sum_{i,j} \alpha^{\cyc}_{ij} g_{ij}\;\textrm{ and }\;
  \ha  = \sum_{i,j} \alpha^{\anti}_{ij} g_{ij}.
\]
In order to compute the constants $\alpha^{\cyc}_{ij}$ and $\alpha^{\anti}_{ij}$, we find divisors $D_1,\ldots,D_4 \in \Div^0(X)$ such that the
corresponding points generate a finite index subgroup of $J(K)$ modulo torsion.
Then we have, for each pair $(k,l)$, the
relationships $$\hc(D_k, D_l) = \sum_{0 \leq i \leq j \leq 3} \alpha^{\cyc}_{ij} \cdot
  \left(\frac{1}{2}\left(f_i(D_k)f_j(D_l) + f_j(D_k)f_i(D_l)\right)\right) $$ and  
  $$\ha(D_k, D_l) = \sum_{0 \leq i \leq j \leq 3} \alpha^{\anti}_{ij} \cdot
  \left(\frac{1}{2}\left(f_i(D_k)f_j(D_l) + f_j(D_k)f_i(D_l)\right)\right)\,.$$  
  That is, we can solve the linear system relating the height pairings $\hc$ and $\ha$ to the
  natural basis $(g_{ij} \colon 0 \le i \le j \le 3)$ to recover the $\alpha^{\cyc}_{ij}$ and the
  $\alpha^{\anti}_{ij}$.

Every $\OO_K$-integral point $Q$ on $X$ must satisfy
\[
  \rc(\sigma_1(Q), \sigma_2(Q))\in T^{\cyc}\,;\quad \ra(\sigma_1(Q), \sigma_2(Q) \in
  T^{\anti}\,,
\]
where 
\begin{align*}
  \rc(\zvec) &=   \tau_1(z_1) + \tau_2(z_2)-\sum_{ij} \alpha^{\cyc}_{ij} f_i(\zvec)f_j(\zvec)\\
  \ra(\zvec) &= 
  \tau_1(z_1) - \tau_2(z_2) - \sum_{ij} \alpha^{\anti}_{ij} f_i(\zvec)f_j(\zvec)
\end{align*}
and where $T^\cyc$ and $T^\anti$ are the respective possible values for the sum of the local heights away from $p$
of an $\OO_K$-integral point on $X$.

\begin{example}\label{int_ex}
We consider the curve  $$X  \colon  y^2 = x^5 - x^4 + x^3 + x^2 - 2x + 1$$ over $K = \Q(\sqrt{-3})$ and note that $\rk (J/\Q) = 2$ and $\rk (J/K) = 4.$    Let \begin{align*}&P_1 = (0,1), Q_1 = \left(\frac{1}{2}(-\sqrt{-3} + 1), \frac{1}{2}(\sqrt{-3} + 1)\right), P_2 = (1,1), Q_2 = (-1,1),\\
&P_3 = (\sqrt{-3}, 2\sqrt{-3}+1), Q_3 = (2,5), P_4 = (\frac{1}{2}(\sqrt{-3} + 1), \frac{1}{2}(-\sqrt{-3} + 1)), Q_4 = (4, 29),\end{align*}
and let $D_i= (P_i) - (Q_i)$ for $i  =1, \ldots, 4$. The points $[D_1], [D_2], [D_3],
[D_4]$ generate a finite index subgroup $G$ of $J(K)$.  The smallest good ordinary split prime for this curve is $p = 7$.

We recover the following $\OO_K$-integral points on $X$: 
\begin{align*}
&(-1,\pm 1), (\sqrt{-3}, \pm (2\sqrt{-3} + 1)), (0, \pm 1), (2,\pm 5),(4,\pm 29),\\
&(\sqrt{-3} + 1, \pm (-\sqrt{-3} + 4)), (-\sqrt{-3}, \pm (-2\sqrt{-3} + 1)),(1,\pm 1),\\
&(-\sqrt{-3} + 1, \pm (\sqrt{-3}+4)), \left(-\frac{1}{2}\sqrt{-3} + \frac{1}{2}, \pm \left(\frac{1}{2}\sqrt{-3} + \frac{1}{2}\right)\right), \\
&\left(\frac{1}{2}\sqrt{-3} + \frac{1}{2}, \pm \left(-\frac{1}{2}\sqrt{-3} + \frac{1}{2}\right)\right).
\end{align*}
We apply the Mordell-Weil sieve to show that these are
indeed the only integral points. To this end, we apply quadratic Chabauty for the primes~7
and~19. The subgroup $G$ is not saturated at~3, so we use the modified Mordell-Weil sieve
as described in~\cite[Appendix~B]{BD18}. In the notation given there, we used the
auxiliary integer $N=10$ and 
20 primes of $K$ to rule out all but one pair of residue classes modulo $7^4$ and $19^3$,
  respectively; the largest norm of a prime used in this computation was $4591$. In order
  to show that the remaining pair does not correspond to an integral point either, we used
  the auxiliary integer $N=8$.
\end{example}

\begin{remark}\label{R:only_quadratic}
  It appears to be difficult to find examples over number fields $K$ of degree $>2$ for
  which the computation of integral points using our method is applicable in practice. Some restrictions are due to external components of our
  algorithms. For instance, we need to
  compute the Mordell-Weil rank $r$, find $r$ independent points in $J(K)$, compute regular models and
  intersections over $K$ (the {\tt Magma}-command {\tt RegularModel} uses 
  arithmetic in $K$, not in a completion) and apply the Mordell-Weil sieve over $K$.
  These computations often fail or are prohibitively expensive for number fields of degree
  $>2$.

  Therefore our current implementation is also limited to quadratic fields. 
  To go beyond this, we would have to algorithmically compute a basis of the space
  of continuous idele class characters, extend our implementation of the root finding methods discussed in
  Appendix~\ref{appendix:roots_multi}. Moreover, the Coleman integration and $p$-adic
  heights routines, currently restricted to $\Q_p$, would have to be extended to finite
  extensions to be able to deal with  
  generators of (a finite index subgroup of) $J(K)/{\mathrm{tors}}$ that do not split into
  sums of points over $\Q_p$. 
\end{remark}

\section{Examples for rational points}
\label{S:examples_rational}
In this section we present examples for Section \ref{sec:rat}, of which we retain the
notation, when $K$ is a quadratic field \footnote{We focus on examples over quadratic fields for the
same reasons as discussed in Remark~\ref{R:only_quadratic}.}. To simplify the exposition, we further assume that $X$ is defined over $\Q$. We let $p$ be an odd prime which splits in $K$ and such that $X$ has good reduction at each prime above $p$. For simplicity we assume that $a_0$ is a square in $K$.

\subsection{Rank 4 over imaginary quadratic fields}
\label{sec:rank_4_rational}
Let $X/\Q$ be as above and let $K$ be an imaginary quadratic field. If $\rk(E_k/\Q)=1$ and $\rk(E_k/K)=2$ for each $k$, then \eqref{eq:rkgenusbielliptic} and Assumption \ref{ass:E1E2} are both satisfied  in view of Lemma \ref{lemma:indep}. Theorem \ref{thm:bielliptic} with $\chi =\chi^{\cyc}$ and $\chi=\chi^{\anti}$ provides us, for each $k\in \{1,2\}$, with two locally analytic functions $\rho^{\cyc,k}$ and $\rho^{\anti,k}$ on $X^{(k)}(K\otimes \Q_p)$ and finite sets $T^{\cyc,k}$ and $T^{\anti,k}$. Varying $k$ and looking at the intersection $B^k$ of the loci $\rho^{\cyc,k}(\zvec)\in T^{\cyc,k}$ and $\rho^{\anti,k}(\zvec)\in T^{\anti,k}$, we obtain a set of points containing all the points in $\sigma(X(K))$ that do not map to $(\overline{\infty^{\pm}}, \overline{(0,\pm\sqrt{a_0}}))$ or to $(\overline{(0,\pm\sqrt{a_0})}, \overline{\infty^{\pm}})$ under reduction to $\overline{X}(\F_p)\times \overline{X}(\F_p)$.

As already mentioned, the discussion of \S \ref{sec:ell_imag} is used in the computation of the constants of Theorem \ref{thm:bielliptic}. To $p$-adically approximate the set $B^k$, we need to compute roots of systems of two equations in two variables. We do so using Algorithm \ref{alg}, Lemma \ref{lemma:double_roots} and the discussion in Appendix \ref{appendix:roots_multi}. Note that $\rho^{\cyc,k}$ and $\rho^{\anti,k}$ are both invariant under the automorphisms $\theta\colon (x,y)\mapsto (\pm x,\pm y)$ of $X$. 
Thus, from the points in $B^k$ reducing to $(\ol{P},\ol{Q})\in \ol{X}(\F_p)\times \ol{X}(\F_p)$ we can deduce the points in $B^k$ reducing to $(\theta(\ol{P}),\theta(\ol{Q}))$. This cuts down the number of pairs of residue disks that we need to consider.

\begin{example}\label{91_ex}
Consider
\begin{equation*}
X = X_0(91)^{+}:y^2 = x^6 - 3x^4 + 19x^2 - 1.
\end{equation*}
The associated elliptic curves $E_1$ (\cite[\href{http://www.lmfdb.org/EllipticCurve/Q/91/a/1}{91.a1}]{lmfdb}) and $E_2$ (\cite[\href{http://www.lmfdb.org/EllipticCurve/Q/91/b/2}{91.b2}]{lmfdb}) each have rank $1$ over $\Q$ and applying quadratic Chabauty with $p=5$ recovers the rational points
  \begin{equation}\label{x091Q-pts}
\{\infty^{\pm},(1,\pm 4),(-1,\pm 4),(1/3,\pm 28/27),(-1/3,\pm 28/27)\}
\end{equation}
together with $2$ other $5$-adic points up to automorphisms. We now carry out quadratic
  Chabauty for $p=5$ and $K = \mathbb{Q}(i)$, over which the elliptic curves $E_1$ and $E_2$ attain rank $2$. Denote by $T^{\cyc,k}$ and $T^{\anti,k}$ the explicitly computable finite sets of Theorem \ref{thm:bielliptic} for $\chi=\chi^{\cyc}$ and $\chi = \chi^{\anti}$, respectively.

\begin{claim}
$T^{\cyc,1}=T^{\mathrm{cyc},2}=T^{\anti,1}=T^{\anti,2}=\{0\}$.
\end{claim}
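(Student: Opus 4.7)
Since $E_1$ and $E_2$ both have conductor $91 = 7 \cdot 13$, the only primes of $K = \Q(i)$ that could contribute a nonzero term to any of the four sets $T^{\chi,k}$ are those above $7$ and $13$. Since $7 \equiv 3 \pmod 4$, there is a unique inert prime $\fq_7 \subset \cO_K$ above $7$; since $13 \equiv 1 \pmod 4$, there are two split primes $\fq_{13}, \fq_{13}'$ above $13$. My plan is to show that for each such prime $\fq$ and each $z \in X(K_\fq) \setminus \{\varphi_k^{-1}(\pm Q_k)\}$, the local contribution $w_\fq^{\chi,k}(z)$ introduced in the proof of Theorem~\ref{thm:bielliptic} vanishes, for $\chi \in \{\lc, \la\}$ and $k \in \{1,2\}$.

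First I would observe that $a_0 = -1$ is a unit at every prime $\fq \nmid p$ of $K$, so $\chi_\fq(a_0) = 0$. Next I would produce regular models of $E_1$ and $E_2$ over $\cO_{K_\fq}$ at each bad $\fq$ using Tate's algorithm (or \texttt{Magma}'s \texttt{RegularModel} intrinsic), and determine how the points $\varphi_k(z)$, $\varphi_k(z) \pm Q_k$, and $\varphi_{3-k}(z)$ reduce onto the special fibers. Combining this data with the formula
\[
  w_\fq^{\chi,k}(z) = 2\bigl(h^{\chi,E_k}_{\fq}(\varphi_k(z)) + h^{\chi,E_k}_{\fq}(Q_k) + \chi_\fq(x(\varphi_{3-k}(z))) - \chi_\fq(a_0) - h^{\chi,E_{3-k}}_{\fq}(\varphi_{3-k}(z))\bigr)
\]
derived in the proof of Theorem~\ref{thm:bielliptic} (together with its analogue when $\varphi_k(z) = \infty_{E_k}$) and the explicit local height formulas of Examples~\ref{ex:ht_cyc} and~\ref{ex:ht_anti}, I would tabulate $w_\fq^{\chi,k}(z)$ as a function of the residue class of $z$. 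A finite case check should then show that the $E_k$ and $E_{3-k}$ terms cancel.

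The main obstacle will be the analysis at the inert prime $\fq_7$, where $K_{\fq_7}$ is the unramified quadratic extension of $\Q_7$: here the models of $E_1, E_2$ must be base-changed from $\Z_7$ to $\cO_{K_{\fq_7}}$ and the special fibers may acquire extra components, forcing one to work with intersection theory over this larger base ring. For the cyclotomic character the calculation largely parallels that of~\cite{BBM14, BBM16}, and I expect it to go through smoothly. For the anticyclotomic character, however, the scalar factor $\tfrac{1}{h_K}\log(\sigma_2(\xi_\fq)/\sigma_1(\xi_\fq))$ appearing in Example~\ref{ex:ht_anti} is nonzero in general, so one really must verify that the intersection multiplicities themselves cancel. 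Once this finite-but-intricate verification is complete at each of $\fq_7, \fq_{13}, \fq_{13}'$, we will conclude $T^{\lc,1} = T^{\lc,2} = T^{\la,1} = T^{\la,2} = \{0\}$.
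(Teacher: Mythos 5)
Your proposal is correct in outline and would eventually reach the same conclusion, but it takes a substantially more labor-intensive route than the paper and misses the one-line observation that drives the actual proof. The paper simply records that $E_1$ and $E_2$ are minimal over $\OO_K$ and have Kodaira type $\mathrm{I}_1$ at \emph{every} prime of $\Q(i)$ above $7$ and $13$. Once this is known, the machinery already developed in the proof of Theorem~\ref{thm:bielliptic} does all the work: $\mathrm{I}_1$ reduction forces $W_\fq^{\chi,E_\ell}=\{0\}$ (the special fiber of the minimal regular model is irreducible, so no vertical correction is needed and the intersection numbers vanish), $Q_1=(0,i)$ and $Q_2=(0,-1)$ are integral, and $a_0=-1$ is a unit at every $\fq\nmid p$; the final sentence of that proof then yields $T_\fq^{\chi,k}\subseteq\{0\}$ for every prime $\fq$, hence $T^{\chi,k}=\{0\}$.

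Two of the obstacles you anticipate are not actually present. At the inert prime $\fq_7$ the curves have multiplicative reduction, and an \emph{unramified} base change never enlarges the special fiber of a semistable minimal model: $\mathrm{I}_1$ over $\Q_7$ stays $\mathrm{I}_1$ over $\OO_{K_{\fq_7}}$, so there are no extra components and no genuinely new intersection theory to do. (In fact for this particular prime the anticyclotomic issue is even more benign, since $\xi_{\fq_7}=7$ is rational and $\log(\sigma_2(7)/\sigma_1(7))=0$.) More importantly, your worry that "one really must verify that the intersection multiplicities themselves cancel" for $\la$ slightly misdiagnoses the situation: the intersection multiplicities do not cancel against one another, they each vanish outright because of $\mathrm{I}_1$, which is why the result is character-independent and the scalar $\frac{1}{h_K}\log(\sigma_2(\xi_\fq)/\sigma_1(\xi_\fq))$ never plays a role. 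Your plan of producing regular models via Tate's algorithm would of course discover all of this, so there is no gap; the paper just short-circuits the computation by stating the reduction type directly.
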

\begin{proof}
$X_0(91)^{+}$ has 
bad reduction at $7$ and $13$. Furthermore, $E_1$ and $E_2$ are minimal at the primes of $\mathbb{Q}(i)$ above $7$ and $13$ and the reduction type at all these primes is $\mathrm{I}_1$.
\end{proof}
  Besides the $\Q$-rational points listed in~\eqref{x091Q-pts}, in $B^1\cup B^2$ we also recover the following points defined over $\mathbb{Q}(i)$:
\begin{equation}\label{x091Qi-pts}
(0, \pm i), (\pm(-2i + 1), \pm(-8i + 12)), (\pm(2i + 1), \pm (8i+ 12)),
\end{equation}
as well as $20$ other $5$-adic points up to automorphisms $(x,y)\mapsto (\pm x,\pm y)$.
  We apply the Mordell-Weil sieve to show that these $p$-adic points do not
  correspond to $\Q(i)$-rational points and that there are no $\Q(i)$-rational points
  mapping to the pairs of points $$
  (\overline{\infty^{\pm}}, \overline{(0,\pm\sqrt{-1}})),
  (\overline{(0,\pm\sqrt{-1})}, \overline{\infty^{\pm}}).$$ 
  This proves that all $\Q(i)$-rational points on $X_0(91)^+$ are either listed
  in~\eqref{x091Q-pts} or in~\eqref{x091Qi-pts}.

A technical remark (with reference to Appendix \ref{appendix:roots_multi}): there are four pairs of residue disks in $\overline{X}(\F_p)\times \overline{X}(\F_p)$, up to automorphism, such that a root in $\Z/p\Z\times \Z/p\Z$ of the corresponding system of equations is also a root modulo $p$ of the determinant of the Jacobian matrix. Of these, two correspond to the global points $(0,i)$ and $\infty$ (and thus the determinant of the Jacobian matrix has a zero at those points by Remark \ref{rmk:auto_double_roots}), another one lifts to no root in $\mathbb{Z}/p^n\Z\times \Z/p^n\Z$ for sufficiently large $n$ and for the remaining one we can apply the multivariate Hensel's lemma after lifting the root modulo some small power of $p$ (cf.\ Algorithm \ref{alg}). Furthermore, the strategy outlined after Remark \ref{rmk:auto_double_roots} also shows that there are no other points in the residue disks of $(0,i)$ and $\infty$. Therefore, we conclude that the approximations that we have computed are in one-to-one correspondence with the elements in $B^1\cup B^2$ (up to automorphism). Amongst the $20$ extra points, we recognize the pair of rational points $\zvec=((1,4),(-1,4))$. Indeed, $z_1$ and $z_2$ both map to $(1,4)$ in $E_1(\Q)$ and map to the torsion points $(-1,-4)$ and $(-1,4)$ in $E_2(\Q)$. Thus $g_{ij}^{E_2}(\varphi_2(\zvec))=0$. The claim then follows since the local heights are invariant under pre-multiplication by $-1$.

Finally, note that, since $T^{\anti,k} = \{0\}$, in this case $B^1\cup B^2$ is closed under $(z_1,z_2)\mapsto (z_2,z_1)$. Quotienting by this symmetry, the $20$ extra pairs of points become $14$.

\end{example}
\subsection{Rank 3 over real quadratic fields}
\label{sec:examples_bielliptic_real}
Keep the notation of the beginning of Section \ref{S:examples_rational} and let $K$ now be a real quadratic field\footnote{The strategy also works if $K$ is imaginary. See also footnote \ref{ft:also_imaginary} in \S \ref{sec:real}.}. Then \eqref{eq:rkgenusbielliptic} forces $\rk (J/K)\leq 3$. Since we need Assumption \ref{ass:E1E2} to also hold, we suppose here that $\rk(E_1/K)=2$, $\rk(E_1/\Q)=1$ and $\rk(E_2/K)=1$. This is sufficient by Lemma \ref{lemma:indep} and by the non-degeneracy of the $p$-adic logarithm on an elliptic curve.

For a fixed $k\in\{1,2\}$, Theorem \ref{thm:bielliptic} determines only one function
$\rho_1^k$ and one finite set $T^k$ 
in this case; the second function $\rho_2$ that we need comes from the dependence of $f_0^{E_2}$ and $f_1^{E_2}$ (see also \S\ref{sec:real}). Let
\begin{equation*}
B^k = \{\zvec\in X^{(k)}(K\otimes \Q_p) : \rho_1^k(\zvec)\in T^k, \rho_2(\zvec) = 0\}.
\end{equation*}
Note that 
it suffices to run through $\overline{X}(\mathbb{F}_p)\times \overline{X}(\mathbb{F}_p)/\sim$ where $(P,Q)\sim (R,T)$ if and only if $P=T$ and $Q=R$, as our equations are symmetric with respect to the two embeddings of $K$ into $\mathbb{Q}_p$ (cf.\ the discussion of \S \ref{sec:ell_imag}). As before, we can also use the invariance of our zero sets under the hyperelliptic and bielliptic automorphisms of $X$ to reduce even further the number of pairs of disks that we need to consider.

In view of Remark \ref{rmk:twice_over_Q}, the resulting algorithm is often an alternative to a series of quadratic Chabauty computations over the rationals. This is in particular the case in the following example.
\begin{example}
\label{Wetherell}
Consider
\begin{equation*}
X\colon y^2=x^6+x^2+1.
\end{equation*}
In \cite[Example 6.3]{bia:QCKim} quadratic Chabauty over $\Q$ was used to provide an alternative proof of Wetherell's result \cite{Wet97} that
\begin{equation*}
X(\mathbb{Q})= \{\infty^{\pm},(0,\pm 1 ), (\pm 1/2,\pm 9/8)\}.
\end{equation*}
Let $K=\mathbb{Q}(\sqrt{34})$. We have $\rk(E_1/\mathbb{Q})=1$, $\rk(E_1/K)=2$ and $\rk(E_2/\mathbb{Q})=\rk(E_2/K)=1$. We now determine $X(K)$ exactly, by applying quadratic Chabauty over $K$ with the split prime $p=3$. We may choose
\begin{equation*}
\rho_2(\zvec) = \int_{\infty}^{\varphi_2(z_1)}\omega_0^{E_2}-\int_{\infty}^{\varphi_2(z_2)}\omega_0^{E_2},
\end{equation*}
where $\infty\colonequals \infty_{E_2}$. 
Denote by $T^k$ the explicitly computable finite set of Theorem \ref{thm:bielliptic} for $\chi =\chi^{\cyc}$.
\begin{claim}
\label{claim:wetherell_revisited}
$T^1 =\left\{0,2\log 2,\frac{5}{2}\log 2\right\}$, $T^2 = \left\{-2\log 2,-4\log 2,-\frac{9}{2}\log 2\right\}$.
\end{claim}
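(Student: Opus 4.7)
The plan is to adapt the strategy of~\cite[Proposition~6.5]{bia:QCKim}, which computed the corresponding set $T^k_{\Q}$ for the same curve over~$\Q$ in the setting of~\cite[Example~6.3]{bia:QCKim}. Since $X$, $E_1\colon y^2=x^3+x+1$ and $E_2\colon y^2=x^3+x^2+1$ are all defined over~$\Q$ with bad reduction supported on $\{2,31\}$ (each $E_i$ has discriminant $-2^4\cdot 31$, and $f(x)=x^6+x^2+1\equiv (x^3+x+1)^2\pmod 2$ carries a double factor), the proof of Theorem~\ref{thm:bielliptic} already shows that $w_\fq^k(z_\fq)=0$ whenever $\fq$ does not lie over $2$ or $31$. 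In $K=\Q(\sqrt{34})$, whose discriminant is $136$ and whose class number is $h_K=2$, the prime $2$ ramifies as $(2)=\fq_2^2$, and since $\left(\tfrac{34}{31}\right)=\left(\tfrac{3}{31}\right)=-1$ the prime $31$ is inert, giving a single prime $\fq_{31}$ with residue degree~$2$. Using Example~\ref{ex:ht_cyc} with $\xi_{\fq_2}=2$ (so $N(\xi_{\fq_2})=4$) and $\xi_{\fq_{31}}=31^2$ (so $N(\xi_{\fq_{31}})=31^4$), the local cyclotomic pairings become
\[
\hc_{\fq_2}(D,D')=-\log 2\cdot (D,D')_{\fq_2},\qquad \hc_{\fq_{31}}(D,D')=-2\log 31\cdot (D,D')_{\fq_{31}},
\]
and the same scalars govern $\chi^{\cyc}_\fq$ on uniformizers; in particular $\chi^{\cyc}_{\fq_2}(2)=-2\log 2$. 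For $\Q$-rational input one then has $\hc^{K}_\fq=2\hc^{\Q}_q$, as demanded by $\chi^{\cyc}_K=\chi^{\cyc}_\Q\circ N_{K/\Q}$.

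Next, I would show that $\fq_{31}$ contributes nothing. Both $E_1$ and $E_2$ have Kodaira type $\mathrm{I}_1$ at~$31$ (since $v_{31}(\Delta_{E_k})=1$ and $v_{31}(c_4^{E_k})=0$), and this persists under the unramified base change to $K_{\fq_{31}}\cong\Q_{31^2}$, so the component groups stay trivial. Via the quasi-parallelogram-law reformulation used in the proof of Theorem~\ref{thm:bielliptic}, $w_{\fq_{31}}^{k}(z)$ reduces to a sum of $E_\ell$-local heights and $\chi^{\cyc}_{\fq_{31}}$-terms on $x(\varphi_\ell(z))$. A case distinction on $v_{\fq_{31}}(x(z))$, combined with the explicit intersection-theoretic formulas for $h^{\cyc,E_\ell}_{\fq_{31}}$ on the regular $\mathrm{I}_1$-model, then shows that the non-integral height contributions cancel against the $\chi$-terms, whence $w_{\fq_{31}}^{k}(z)=0$ for every $z\in X(K_{\fq_{31}})\setminus\{\varphi_k^{-1}(\pm Q_k)\}$.

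It remains to enumerate the values taken at~$\fq_2$. Working on a regular model of $X$ (and of $E_1,E_2$) over $\OO_{K_{\fq_2}}\cong\Z_2[\sqrt 2]$, I would re-run the residue-disk analysis of~\cite[Proposition~6.5]{bia:QCKim}. Under the scaling identity $\hc^{K}_{\fq_2}=2\hc^{\Q}_2$ for $\Q$-rational divisors (and similarly for $\chi^{\cyc}$), each value of $w_2^{k}$ produced over~$\Q$ in~\cite{bia:QCKim} gets multiplied by~$2$; verifying that no residue disk of $X(K_{\fq_2})\setminus X(\Q_2)$ yields a value outside this doubled set, one recovers $T^1$ and $T^2$ as exactly twice the three-element sets computed in~\cite[Example~6.3]{bia:QCKim}, which gives the claim. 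The hardest part is this final verification: because the ramification at~$2$ refines the disk structure of $X(K_{\fq_2})$ and could a priori introduce fractional intersection multiplicities absent from the $\Q$-level analysis, one must explicitly examine the regular model over $\OO_{K_{\fq_2}}$ to rule out any new value of $w_{\fq_2}^{k}$ arising from genuinely $K_{\fq_2}$-adic points.
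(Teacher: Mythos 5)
The paper's proof works directly over $K_\mathfrak{q}$: it determines the Kodaira types at the prime $\mathfrak{q}$ above $2$ (namely $\mathrm{I}_0^*$ for $E_1$ with Tamagawa number $1$, so $h_{\mathfrak{q}}^{E_1}$ vanishes on integral points, and $\mathrm{I}_1^*$ for $E_2$ with Tamagawa number $4$), reads off the possible values $\{0,-\log 2,-\tfrac{5}{4}\log 2\}$ of $h_{\mathfrak{q}}^{E_2}$ from the general local-height formula in \cite[Proposition~2.4]{bia:QCKim}, and then runs the quasi-parallelogram law. Your proposal instead tries to deduce the sets $T^k$ by doubling the corresponding sets over $\Q$; this is a genuinely different route, and it contains a real gap.

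The gap is the asserted scaling identity $\hc^{K}_{\fq} = 2\,\hc^{\Q}_{2}$ for $\Q$-rational divisors. You justify it only by the relation $\chi^{\cyc}_{K}=\chi^{\cyc}_{\Q}\circ N_{K/\Q}$, but the local cyclotomic height at $\fq\nmid p$ is $\hc_{\fq}(D,D')=\chi^{\cyc}_{\fq}(\pi_{\fq})\,(D,D')_{\fq}$, and the character part does \emph{not} double: as you yourself compute, $\chi^{\cyc}_{\fq}(\pi_{\fq})=-\log 2 =\chi^{\cyc}_{2}(2)$. What would have to double is the intersection multiplicity $(D,D')_{\fq}$ on a regular model over $\OO_{K_{\fq}}$, and this is precisely what fails to follow formally when the ramified base change is not unramified and the reduction is additive: the regular model changes qualitatively. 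Concretely, over $K_{\fq}$ one has $v_{\fq}(\Delta_{E_i})=2\cdot 4=8$ and the minimal models acquire the star types $\mathrm{I}_0^*$ and $\mathrm{I}_1^*$; over $\Q_2$, where $v_2(\Delta_{E_i})=4<6$, no $\mathrm{I}_n^*$ type can occur, so the component groups and the reduction components a point lands on are not related in a way that yields a clean factor-of-$2$ relation between the non-$\chi$ parts of the local heights. (As a symptom: $h_{\fq}^{E_2}(Q_2)=-\log 2$ over $K_{\fq}$ by the paper's computation; there is no reason the $\Q_2$-local height of $Q_2$ equals $-\tfrac{1}{2}\log 2$, and if the $\Q_2$ Tamagawa number happens to be $1$ then it is in fact $0$.) Finally, you correctly flag that one must also rule out new values coming from points in $X(K_{\fq})\setminus X(\Q_2)$, but this "hardest part" is exactly the residue-disk analysis over $K_{\fq}$ that the paper carries out; once one does it, the doubling argument is superfluous. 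Your treatment of the inert prime above $31$ is sound (though heavier than necessary — the paper just notes the Tamagawa number is $1$), but the argument at $\fq\mid 2$ needs to be replaced by a direct computation over $\OO_{K_{\fq}}$ as in the paper.
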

\begin{proof}
The elliptic curves $E_1$ and $E_2$ have everywhere good reduction except at the primes above $2$ and $31$. Furthermore, the models
\begin{equation*}
E_1\colon y^2 = x^3+x+1,\qquad E_2\colon y^2 = x^3+x^2+1
\end{equation*}
are globally minimal also over $K$. The prime $31$ is inert in $K$ and $E_1$ and $E_2$ have reduction type $\mathrm{I}_1$ with Tamagawa number $1$ at $31\OO_K$. Thus, the only nontrivial contributions to $T^k$ can come from the unique prime $\mathfrak{q}$ of $K$ above $2$. The curve $E_1$ has Tamagawa number $1$ (and Kodaira symbol $\mathrm{I}_0^{*}$) at $\mathfrak{q}$, so $h_{\fq}^{E_1}$ is identically zero on integral points. The curve $E_2$ has reduction type $\mathrm{I}_1^{*}$ with Tamagawa number $4$ at $\fq$ and we have
\begin{equation*}
h_{\fq}^{E_2}(x,y) =\begin{cases}
0 & \text{ if } |x|_{\mathfrak{q}}=1,\\
-\log 2 \text{ or} -\frac{5}{4}\log 2 &\text{ if } |x|_{\mathfrak{q}}<1
\end{cases} 
\end{equation*}
(see e.g.\ \cite[Proposition 2.4]{bia:QCKim}); in particular, $h_{\mathfrak{q}}^{E_2}(Q_2) =-\log 2$.

For $z\in X(K_{\fq})$, $\varphi_1(z)\neq \pm Q_1$ resp.\ $\varphi_2(z)\neq \pm Q_2$, let
\begin{align*}
w_\fq^{E_1}(z) &= h_{\mathfrak{q}}^{E_1}(\varphi_1(z)+Q_1)+h_{\mathfrak{q}}^{E_1}(\varphi_1(z)-Q_1)-2h_{\mathfrak{q}}^{E_{2}}(\varphi_{2}(z))\\
w_\fq^{E_2}(z) &= h_{\mathfrak{q}}^{E_2}(\varphi_2(z)+Q_2)+h_{\mathfrak{q}}^{E_2}(\varphi_2(z)-Q_2)-2h_{\mathfrak{q}}^{E_{1}}(\varphi_{1}(z)).
\end{align*}
Using the quasi-parallelogram law \eqref{eq:quasi_par}, we find
\begin{itemize}
\item If $|x(z)|_{\fq}\leq 1$, then $w_{\fq}^{E_1}(z)=0$ and $w_{\fq}^{E_2}(z) = -2\log 2$.
\item If $|x(z)|_{\fq}>1$, then $w_{\fq}^{E_1}(z)\in \left\{2\log 2,\frac{5}{2}\log 2\right\}$ and $w_{\fq}^{E_2}(z)\in\left\{-4\log 2,-\frac{9}{2}\log 2\right\}$. 
\end{itemize}
\end{proof}
Since $E_2(K)=E_2(\Q)$ (as $\rank(E_2/K)=\rank(E_2/\Q)$ and $E_2(K)_{\mathrm{tors}} = \{\infty\}$), Remark \ref{rmk:twice_over_Q} tells us that $X(K)\subset X^{(1)}(K\otimes \Q_p)\cup X^{(2)}(K\otimes \Q_p)$ and, in particular, $X(K)\subseteq B^1\cup B^2$. We have
\begin{equation*}
\overline{X}(\F_3) = \{\overline{\infty^{\pm}}, \overline{(0,\pm 1)}, \overline{(\pm 1,0)}\}.
\end{equation*}
It follows from the observations at the beginning of \S \ref{sec:examples_bielliptic_real} that it suffices to consider the points in $X^{(k)}(K\otimes \Q_p)$ reducing to one of the following pairs of residue disks in $\ol{X}(\F_3)\times \ol{X}(\F_3)$:
\begin{align*}
(\overline{\infty^+},\overline{\infty^+}), (\overline{\infty^+},\overline{\infty^-}),(\overline{\infty^+},\ol{(1,0)}) &\text{ with } k=1,\\
 (\ol{(0,1)},\ol{(0,1)}), (\ol{(0,2)},\ol{(0,1)}), (\ol{(0,2)},\ol{(1,0)}), (\ol{(1,0)},\ol{(1,0)}), (\ol{(1,0)},\ol{(2,0)}) &\text{ with } k=2. 
\end{align*}
By Claim \ref{claim:wetherell_revisited}, the intersection of $B^k$ with a residue pair (for $k$ dependent on the pair and chosen as above) is given by the union of the zeros of three systems of two equations in two variables (by a similar argument to the one at the end of Remark \ref{rmk:twice_over_Q}, some pairs could be excluded a priori from containing points in $X(K)$ by valuation considerations, but we leave them all in the computation here). Using the algorithms of Appendix \ref{appendix:roots_multi}, we recover in $B^1\cup B^2$ exactly the points in \textsc{Table} \ref{table:weth_quad} (the residue pairs contained in $X^{(k)}(K\otimes \Q_p)$ that intersect $B^k$ trivially are not listed).
\begin{table}
\begin{center}
\makebox[\textwidth][c]{\begin{tabular}{ |c|c|c|c|c|c|c| } 
\hline
residue pair & $k$ & recovered in $B^{k}$ & if not in $X(K)$, why?  \\
\hline
\multirow{3}{*}{$(\overline{\infty^+},\overline{\infty^+})$} & \multirow{3}{*}{1} & $\infty^{+}$& \\
&  & $\left(\frac{11}{204}\sqrt{34},\frac{44909}{249696}\sqrt{34}\right)$ &\\
& & $\left(-\frac{11}{204}\sqrt{34},-\frac{44909}{249696}\sqrt{34}\right)$ &\\
\hline
 \multirow{2}{*}{$(\ol{(0,1)},\ol{(0,1)})$} & \multirow{2}{*}2 &  $(0,1)$ & \\
 & & $x(z_1)=x(z_2)=\pm (2\cdot 3 + 2\cdot 3^3 + O(3^5)) $ & $\ord_p(x(\varphi_2(z)))=-2$\\ 
\hline
$(\ol{(0,2)},\ol{(0,1)})$ & $2$ & $x(z_1)= \pm( 2\cdot 3 + 3^2 + 2\cdot 3^3 +O(3^5))=  -x(z_2)$& $\ord_p(x(\varphi_2(z)))=-2$ \\
\hline
$(\ol{(1,0)},\ol{(1,0)})$ &$2$ & $\left(-\frac{1}{2},\pm \frac{9}{8}\right)$ & \\
\hline
$(\ol{(1,0)},\ol{(2,0)})$ &$2$ & $x(z_1)=1 + 3 + 3^3 + O(3^5)= -x(z_2)$, $y(z_1)=-y(z_2)$ & \ref{O2} with $n_1=\pm 1$, $n_2=3$ \\
\hline 
\end{tabular}}
\caption{Computation of $B^1\cup B^2$ for Example \ref{Wetherell}.}
\label{table:weth_quad}
\end{center}
\end{table}

Besides recovering representatives of the points in
\begin{equation*}
X(\mathbb{Q})\cup\left\{\left(\frac{11}{204}\sqrt{34},\pm \frac{44909}{249696}\sqrt{34}\right),\left(-\frac{11}{204}\sqrt{34},\pm \frac{44909}{249696}\sqrt{34}\right) \right\}
\end{equation*}
up to automorphisms, our zero sets contain some points $\zvec=(z_1,z_2)\in X(\Q_p)\times X(\Q_p)$ which we do not recognize as points coming from $X(K)$. Using one of the following arguments, we can prove that each of these points is indeed not $K$-rational. Suppose that $\zvec\in \sigma(X(K))$. Then, since $E_2(K)=E_2(\Q)$, the computed approximations of $\varphi_2(z_1)$ and $\varphi_2(z_2)$ must agree. If this holds, let $\varphi_2(z)$ be either of $\varphi_2(z_1)$ and $\varphi_2(z_2)$.
\begin{enumerate}[label = (O\arabic*)]
\item\label{O1} If $\varphi_2(z)$ is in the formal group at $p$, it was shown in \cite[Example 6.3]{bia:QCKim} that we must have $\ord_p(x(\varphi_2(z)))\leq -4$.
\item\label{O2} Else, since $E_2(\Q)$ is generated by $Q_2=(0,1)$, if $\varphi_2(z)=nQ_2$ for some $n\in\Z$, then
\begin{equation*}
n\equiv n_1\colonequals\frac{\int_{\infty}^{\varphi_2(z)}\omega_0^{E_2}}{\int_{\infty}^{Q_2}\omega_0^{E_2}}\bmod{3};
\end{equation*}
on the other hand, $\overline{Q_2}\in \overline{E_2}(\F_3)$ has order $6$. Thus, if $\overline{\varphi_2(z)}=n_2\overline{Q_2}$, we also have
\begin{equation*}
n \equiv n_2 \bmod{6}. 
\end{equation*}
If $n_1\not\equiv n_2 \bmod{3}$, then $\zvec\not\in \sigma(X(K))$.
\end{enumerate}
The extra points in the residue pair $(\ol{(0,1)},\ol{(0,1)})$ are precisely the extra
  points of the computation over $\Q$ (\cite[Example 6.3]{bia:QCKim}). Also note that the
  criterion \ref{O1} could be deduced from \ref{O2}.
\end{example}

\appendix
\section{Roots of multivariate systems of $p$-adic equations}
\label{appendix:roots_multi}
Some of the algorithms of Sections \ref{S:alg_ex_int} and \ref{S:examples_rational} require us to solve systems of two $p$-adic equations in two variables. In particular, in each pair of residue disks we reduce the problem to that of finding all $(t_1,t_2)\in \mathbb{Z}_p\times \mathbb{Z}_p$ such that
\begin{equation}
\label{eq:system}
\begin{cases}
\rho_1(pt_1,pt_2) = 0\\
\rho_2(pt_1,pt_2) = 0
\end{cases}
\end{equation}
for some $\rho_1(t_1,t_2),\rho_2(t_1,t_2)\in \mathbb{Q}_p[[t_1,t_2]]$.

The power series $\rho_1$ and $\rho_2$ are convergent on $p\mathbb{Z}_p\times p\mathbb{Z}_p$ and, possibly after multiplying by a power of $p$, we may assume that $\rho_1(pt_1,pt_2),\rho_2(pt_1,pt_2)\in \mathbb{Z}_p[[t_1,t_2]]\setminus p\mathbb{Z}_p[[t_1,t_2]]$.

A careful study of the valuation of the coefficients of $\rho_1$ and $\rho_2$ allows us to conclude that if $(\alpha,\beta)\in \mathbb{Z}_p\times \mathbb{Z}_p$ is a solution  to (\ref{eq:system}), then $(\alpha \bmod{p^n},\beta \bmod{p^n})$ is a solution modulo $(p^n)$ to the system
\begin{equation}
\label{eq:systemmodpn}
\begin{cases}
\rho_1(pt_1,pt_2)+O(t_1,t_2)^{\tilde{n}} = 0\\
\rho_2(pt_1,pt_2)+O(t_1,t_2)^{\tilde{n}} = 0
\end{cases}
\end{equation}
for some integer ${\tilde{n}}$. How reversible is this process? In other words, if we find all the solutions of (\ref{eq:systemmodpn}) mod $p^n$, how do we know if each such solution \emph{lifts} to a \emph{unique} solution to (\ref{eq:system})?

In some cases, we can apply a multivariate version of Hensel's lemma (see Theorem \ref{thm:multihensel} below). Let us first introduce some notation. Let $m\in\mathbb{N}$ and, for an element $\ul{a}=(a_1,\dots,a_m)\in \mathbb{Q}_p^m$, define $\ord_p(\ul{a}) = \min_{1\leq i\leq m}\{\ord_p(a_i)\}$. Then $\ord_p$ satisfies the following properties:
\begin{enumerate}[label = (\alph*)]
\item For any $k\in\mathbb{Q}_p$, $\ord_p(k\ul{a})=\ord_p(k)+\ord_p(\ul{a}).$
\item\label{prop:nonarch} $\ord_p(\ul{a}+\ul{b})\geq \min\{\ord_p(\ul{a}),\ord_p(\ul{b})\}$ with equality if $\ord_p(\ul{a})\neq\ord_p(\ul{b})$.
\end{enumerate}

 For $\ul{f}=(f_1,\dots,f_m)\in\Z_p[[x_1,\dots,x_m]]^m$ and $\ul{x}=(x_1,\dots,x_m)$, define the Jacobian matrix
\begin{equation*}
J_{\ul{f}}(\ul{x})\colonequals  \begin{bmatrix} 
    \frac{\partial{f_1}}{\partial{x_1}} & \frac{\partial{f_1}}{\partial{x_2}} & \dots & \frac{\partial{f_1}}{\partial{x_m}}\\
    \vdots & \ddots &\vdots \\
    \frac{\partial{f_m}}{\partial{x_1}} &\frac{\partial{f_m}}{\partial{x_2}}&    \dots     & \frac{\partial{f_m}}{\partial{x_m}}
    \end{bmatrix}(\ul{x}).
\end{equation*}

Denote by $\Z_p\langle x_1,\dots, x_m \rangle$ the subring of $\Z_p[[x_1,\dots, x_m]]$ consisting of those power series whose coefficients tend to $0$ as the degree of the corresponding monomials tends to infinity.
\begin{theorem}[Multivariate Hensel's lemma \hspace{1sp}{\cite[Theorem 4.1]{conrad:multihensel}}]
\label{thm:multihensel}
Let\\
$\ul{f}=(f_1,\dots,f_m)\in \Z_p\langle x_1,\dots, x_m \rangle^m$. Assume that $\ul{a}\in \mathbb{Z}_p^m$ satisfies
\[
\ord_p(\ul{f}(\ul{a}))>2 \ord_p(\det(J_{\ul{f}}(\ul{a}))).
\]
 Then there is a unique $\ul{\alpha}\in\mathbb{Z}_p^m$ such that $\ul{f}(\ul{\alpha})=\ul{0}$ and $\ord_p(\ul{\alpha}-\ul{a})>\ord_p(\det(J_{\ul{f}}(\ul{a})))$. Furthermore, $\ord_p(\ul{\alpha}-\ul{a})\geq \ord_p(\ul{f}(\ul{a}))-\ord_p(\det(J_{\ul{f}}(\ul{a})))$.
\end{theorem}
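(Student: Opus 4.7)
The plan is to prove this by Newton iteration, mimicking the classical one-variable Hensel proof but using matrix inverses of the Jacobian. Write $e = \ord_p(\det J_{\ul{f}}(\ul{a}))$ and $v = \ord_p(\ul{f}(\ul{a}))$, so the hypothesis is $v > 2e$. The first step is a multivariate Taylor expansion: for any $\ul{x} \in \Z_p^m$ and $\ul{h} \in \Z_p^m$,
\[
\ul{f}(\ul{x} + \ul{h}) = \ul{f}(\ul{x}) + J_{\ul{f}}(\ul{x})\ul{h} + \ul{R}(\ul{x}, \ul{h}),
\]
where each component of $\ul{R}(\ul{x},\ul{h})$ is a power series in $\ul{h}$ (with coefficients in $\Z_p$ depending on $\ul{x}$) whose monomials all have total degree $\geq 2$ in $\ul{h}$. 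The basic estimate I need is $\ord_p(\ul{R}(\ul{x},\ul{h})) \geq 2\ord_p(\ul{h})$, which follows from Tate-algebra-style bounds together with the non-archimedean property~(b) for $\ord_p$.

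Next I would define the Newton iterates $\ul{a}_0 = \ul{a}$ and $\ul{a}_{k+1} = \ul{a}_k - J_{\ul{f}}(\ul{a}_k)^{-1} \ul{f}(\ul{a}_k)$ and prove by induction on $k$ the three coupled statements: (i) $\ord_p(\det J_{\ul{f}}(\ul{a}_k)) = e$; (ii) $\ord_p(\ul{a}_{k+1} - \ul{a}_k) \geq v_k - e$, where $v_k = \ord_p(\ul{f}(\ul{a}_k))$; and (iii) $v_{k+1} \geq 2(v_k - e)$, so the quantity $v_k - 2e$ at least doubles at every step. For (i), the entries of $J_{\ul{f}}(\ul{a}_{k+1}) - J_{\ul{f}}(\ul{a}_k)$ have valuation $\geq \ord_p(\ul{a}_{k+1}-\ul{a}_k) \geq v_k - e > e$, and since the cofactor expansion of the determinant is congruent modulo high powers of $p$, one sees that $\det J_{\ul{f}}(\ul{a}_{k+1})$ agrees with $\det J_{\ul{f}}(\ul{a}_k)$ up to something of valuation $> e$, preserving~(i). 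For (ii) and (iii), apply the Taylor expansion with $\ul{x} = \ul{a}_k$ and $\ul{h} = -J_{\ul{f}}(\ul{a}_k)^{-1}\ul{f}(\ul{a}_k)$: by Cramer's rule $\ord_p(\ul{h}) \geq v_k - e$, and then $\ul{f}(\ul{a}_{k+1}) = \ul{R}(\ul{a}_k,\ul{h})$ has valuation $\geq 2(v_k - e)$.

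From the induction, $v_k \to \infty$ and the sequence $(\ul{a}_k)$ is Cauchy in $\Z_p^m$, so it converges to some $\ul{\alpha}$ with $\ul{f}(\ul{\alpha}) = \ul{0}$, and passing to the limit in $\ord_p(\ul{a}_k - \ul{a}_0) \geq v - e$ gives the claimed lower bound on $\ord_p(\ul{\alpha} - \ul{a})$. For uniqueness, suppose $\ul{\beta} \in \Z_p^m$ is another root with $\ord_p(\ul{\beta} - \ul{a}) > e$. Applying Taylor at $\ul{\alpha}$ with $\ul{h} = \ul{\beta} - \ul{\alpha}$ and using $\ul{f}(\ul{\beta}) = \ul{f}(\ul{\alpha}) = \ul{0}$ gives $J_{\ul{f}}(\ul{\alpha})(\ul{\beta}-\ul{\alpha}) = -\ul{R}(\ul{\alpha},\ul{h})$. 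Since $\ord_p(\det J_{\ul{f}}(\ul{\alpha})) = e$ (by the limiting version of (i)), Cramer's rule gives $\ord_p(\ul{\beta}-\ul{\alpha}) \leq \ord_p(\ul{R}(\ul{\alpha},\ul{h})) - e \leq 2\ord_p(\ul{\beta}-\ul{\alpha}) - e$ (as long as $\ul{\beta} \neq \ul{\alpha}$, so $\ord_p(\ul{h})$ is finite), forcing $\ord_p(\ul{\beta}-\ul{\alpha}) \geq $ a strictly increasing quantity, a contradiction by iteration unless $\ul{\alpha} = \ul{\beta}$.

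The main obstacle I anticipate is the bookkeeping in step~(i): one must verify carefully that the Jacobian determinant has \emph{stable} $p$-adic valuation along the iteration, not merely that the Jacobian stays invertible in $\Q_p$. Everything else is a direct non-archimedean analogue of classical Newton's method, but this stability is what makes the hypothesis $v > 2e$ (rather than $v > e$) necessary and is the genuinely multivariate ingredient hidden behind the estimate.
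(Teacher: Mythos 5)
The paper does not give its own proof of this theorem: it is quoted from Conrad's notes \cite{conrad:multihensel}, and the paper merely reproduces Conrad's Newton recurrence $\ul{a}_{N+1}=\ul{a}_N-\ul{f}(\ul{a}_N)\cdot(J_{\ul{f}}(\ul{a}_N)^T)^{-1}$ and the convergence estimate~(3.9) from that reference. Your proposal reconstructs Conrad's Newton-iteration argument, and the existence and convergence parts (the quadratic Taylor estimate over $\Z_p\langle x_1,\dots,x_m\rangle$, the inductive stability of $\ord_p(\det J_{\ul{f}}(\ul{a}_k))$, and the Cauchy property of the iterates) are correct and match the approach the paper points to.

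However, your uniqueness step has a sign error, and the claimed ``contradiction by iteration'' does not arise in the form you state. Cramer's rule applied to $J_{\ul{f}}(\ul{\alpha})(\ul{\beta}-\ul{\alpha})=-\ul{R}$ gives a \emph{lower} bound, not an upper bound, on $\ord_p(\ul{\beta}-\ul{\alpha})$: multiplying by $\mathrm{adj}(J_{\ul{f}}(\ul{\alpha}))$ gives $(\det J_{\ul{f}}(\ul{\alpha}))(\ul{\beta}-\ul{\alpha})=-\mathrm{adj}(J_{\ul{f}}(\ul{\alpha}))\ul{R}$, whence
\[
e+\ord_p(\ul{\beta}-\ul{\alpha})=\ord_p\bigl(\mathrm{adj}(J_{\ul{f}}(\ul{\alpha}))\ul{R}\bigr)\geq\ord_p(\ul{R})\geq 2\ord_p(\ul{\beta}-\ul{\alpha}),
\]
so $\ord_p(\ul{\beta}-\ul{\alpha})\leq e$. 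On the other hand, $\ord_p(\ul{\alpha}-\ul{a})\geq v-e>e$ and $\ord_p(\ul{\beta}-\ul{a})>e$ by hypothesis, so the ultrametric inequality forces $\ord_p(\ul{\beta}-\ul{\alpha})>e$. This is an immediate contradiction; no bootstrapping is needed. The chain $\ord_p(\ul{\beta}-\ul{\alpha})\leq\ord_p(\ul{R})-e\leq 2\ord_p(\ul{\beta}-\ul{\alpha})-e$ in your write-up has both inequalities reversed, and its (rearranged) conclusion $\ord_p(\ul{\beta}-\ul{\alpha})\geq e$ is the wrong direction to be useful. Once the signs are corrected, the argument closes cleanly.
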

Even though \cite{conrad:multihensel} is not the only reference for Theorem \ref{thm:multihensel} (see for example also \cite[Theorem 23]{Kuhlmann}), the proof given by Conrad in \cite{conrad:multihensel} has the advantage of being constructive: it uses Newton's method to explicitly find approximations of $\ul{\alpha}$ and it shows how fast these converge to $\ul{\alpha}$. In particular, with the notation of the theorem, let $\ul{a}_1=\ul{a}$ and define, for $N\geq 1$,
\begin{equation*}
\ul{a_{N+1}}= \ul{a_{N}} -\ul{f}(\ul{a_{N}})\cdot (J_{\ul{f}}(\ul{a_N})^T)^{-1}\,.
\end{equation*}
By \cite[(3.9)]{conrad:multihensel}, we have the inequality
\begin{equation*}
\ord_p(\ul{a_{N+1}}-\ul{a_N})\geq \ord_p(\det(J_{\ul{f}}(\ul{a})))+2^{N-1}(\ord_p(\ul{f}(\ul{a}))-2\ord_p(\det(J_{\ul{f}}(\ul{a}))));
\end{equation*}
combining this with Property \ref{prop:nonarch} and with the assumption that $$\ord_p(\ul{f}(\ul{a}))-2\ord_p(\det(J_{\ul{f}}(\ul{a})))>0,$$ we obtain
\begin{equation*}
\ord_p(\ul{a_{M}}-\ul{a_N})\geq \ord_p(\det(J_{\ul{f}}(\ul{a})))+2^{N-1}(\ord_p(\ul{f}(\ul{a}))-2\ord_p(\det(J_{\ul{f}}(\ul{a}))))
\end{equation*}
for all $M>N$. Thus, taking limits as $M\to \infty$,
\begin{equation*}
\ord_p(\ul{\alpha}-\ul{a_N})\geq \ord_p(\det(J_{\ul{f}}(\ul{a})))+2^{N-1}(\ord_p(\ul{f}(\ul{a}))-2\ord_p(\det(J_{\ul{f}}(\ul{a})))).
\end{equation*}

We sketch a possible strategy to compute roots of systems of equations of the form (\ref{eq:system}), omitting technicalities on precision. 
This immediately generalizes to the case where (\ref{eq:system}) is replaced by a system of $m$ equations in $m$ variables for arbitrary $m\geq 2$.
\begin{algorithm}\label{alg} Computing roots of (\ref{eq:system})
\begin{itemize}
\item[Input:] A system $\ul{\rho}=(\rho_1,\rho_2)$ of the form (\ref{eq:system}).
\item[Output:] $L$ -- list of roots modulo $p^n$ of $\ul{\rho}$.
\end{itemize}
\begin{enumerate}
\item Reduce to (\ref{eq:systemmodpn}).
\item Compute $R_p =\{(t_1,t_2)\in\{0,\dots,p-1\}^2:\ul{\rho}(pt_1,pt_2)\equiv 0 \bmod{p}\} $.
\item Let $\ul{a}\in R_p$. If $\ord_p(\det(J_{\ul{\rho}}(\ul{a})))=0$, by Theorem \ref{thm:multihensel} there exists a unique root $\ul{\alpha}\in \mathbb{Z}_p^2$ of (\ref{eq:system}) such that $\ord_p(\ul{\alpha}-\ul{a})>0$. Compute $\ul{\alpha}$ to the desired precision using the discussion following Theorem \ref{thm:multihensel} and append it to $L$.
\item\label{en:fourth_opt} Let $\ul{a}\in R_p$ such that $\ord_p(\det(J_{\ul{\rho}}(\ul{a})))\neq 0$. Fix an $r\geq 3$, $r\leq n$ and naively find {\small $R_{p^r}(\ul{a})=\{(t_1,t_2)\in\{0,\dots,p^r-1\}^2:\ul{\rho}(pt_1,pt_2)\equiv 0 \bmod{p^r}\text{ and } (t_1,t_2)\equiv \ul{a}\bmod{p}\}$}. For each $\ul{b}\in R_{p^r}(\ul{a})$ do the following:
\begin{enumerate}[label =(\roman*)]
\item\label{en:hensel} if $2\ord_p(\det(J_{\ul{\rho}}(\ul{b})))< r$, compute using Theorem \ref{thm:multihensel} an approximation of the unique root $\ul{\beta}\in\mathbb{Z}_p^2$ such that $\ord_p(\ul{\beta}-\ul{b})>\ord_p(\det(J_{\ul{\rho}}(\ul{b})))$ and append it to $L$.  
  Remove from $R_{p^r}(\ul{a})$ all $\ul{b}^{\prime}$ such that $\ord_p(\ul{b}^{\prime}-\ul{b})>\ord_p(\det(J_{\ul{\rho}}(\ul{b})))$.
\item\label{en:nothensel} if $2\ord_p(\det(J_{\ul{\rho}}(\ul{b})))\geq  r$, then if $r=n$ append $\ul{b}$ to $L$; if $r<n$, choose some $s>r$, $s\leq n$, naively find  
$R_{p^s}(\ul{b})=\{(t_1,t_2)\in\{0,\dots,p^s-1\}^2:\ul{\rho}(pt_1,pt_2)\equiv 0 \bmod{p^s}\text{ and } (t_1,t_2)\equiv \ul{b}\bmod{p^r}\}$ and for each $\ul{c}\in R_{p^s}(\ul{b})$ repeat \ref{en:hensel}, \ref{en:nothensel} with the appropriate modification in notation. 
\end{enumerate}
\end{enumerate}
\end{algorithm}

\begin{remark}
\label{rmk:auto_double_roots}
If the algorithm never appends to $L$ in \ref{en:fourth_opt}\thinspace \ref{en:nothensel}, each element in $L$ is the
  approximation of a unique root of (\ref{eq:system}) and, conversely, each root of \eqref{eq:system} reduces to an element of $L$. Otherwise, we can try to increase
  $n$. However, there are some residue disks where the systems that we consider provably have roots at which the determinant of the Jacobian matrix also vanishes, so Theorem \ref{thm:multihensel} is not applicable.  Indeed, let $X$ be as in Section~\ref{sec:rat}. Then $X$ admits the automorphisms $\theta\colon(x,y)\mapsto (\pm x,\pm y)$. Composing $\varphi_k$ with $\theta$ we obtain either the identity or multiplication by $-1$ on $E_k$. Since local heights are even, it follows that if a function $\rho$ comes from a height function as in Theorem \ref{thm:bielliptic}, then all its partial derivatives vanish\footnote{E.g.\ for the pair of disks containing a point fixed by $(x,y)\mapsto(-x,y)$, we can set $t_1=x_1$, $t_2=x_2$ as uniformizers. Then $\rho(t_1,t_2)=\rho(-t_1,-t_2)$ giving vanishing of the partial derivatives at $t_1=t_2=0$.} at a $K$-rational point fixed by a nontrivial automorphism $\theta$. A function coming from relations of the linear functionals $f_i$ on $E_k$ is odd and will thus have non-simple zeros in the above sense at those points $(x,y)$ which are fixed by a nontrivial automorphism $\theta$ that projects to the identity on $E_k$. 
\end{remark}

We now explain how to deal with the disks of Remark \ref{rmk:auto_double_roots} where Hensel is not applicable, for the explicit algorithms that we described in Section \ref{S:examples_rational}. Suppose first that we are in the situation of \S \ref{sec:rank_4_rational}, of which we retain the notation. Let $H$ be the subgroup of the automorphism group of $X$ generated by the hyperelliptic involution and the bielliptic automorphism $(x,y)\mapsto (-x,y)$.
\begin{lemma}
\label{lemma:double_roots}
Let $K$ and $X/\Q$ be as in \S \ref{sec:rank_4_rational}. Let $P\in X(K)$ such that $\theta(P) = P$ for a nontrivial automorphism $\theta\in H$; if $y(P)=0$, further assume that $x(P)^2\in \Q$. Let $k\in\{1,2\}$ such that $P\in X^{(k)}(K)$ and let $w\in T^{\anti,k}$ such that $\rho^{\anti,k}(\sigma(P)) = w$. 
Then we can choose local coordinates $t_1$ and $t_2$ for $\sigma_1(P)$ and $\sigma_2(P)$, such that, in the residue pair $\overline{\sigma_1(P)}\times \overline{\sigma_2(P)}\in \overline{X}(\mathbb{F}_p)\times \overline{X}(\mathbb{F}_p)$, we have
\begin{equation*}
\rho^{\anti,k}(t_1,t_2)-w = \sum_{i=1}^{\infty}c_i(t_1^{2i}-t_2^{2i}) = (t_1^2-t_2^2)\sum_{i=1}^{\infty}c_i\biggl(\sum_{j=0}^{i-1}t_1^{2j}t_2^{2i-2-2j}\biggr)
\end{equation*}
for some $c_i\in \Q_p$.
\end{lemma}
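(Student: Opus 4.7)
The plan is to exploit two features of the anticyclotomic setup. First, by the discussion at the end of \S\ref{sec:ell_imag}, for every elliptic curve $E/\Q$ we have the pointwise identity $h^{\anti,E}_{\fp_1}=-h^{\anti,E}_{\fp_2}$ on $E(\Qp)$, together with $\alpha_{01}^{\anti,E}=0$ and $\alpha_{00}^{\anti,E}=-\alpha_{11}^{\anti,E}$. Substituting these into the formula for $\rho^{\anti,k}$ from Theorem~\ref{thm:bielliptic} and collecting terms by coordinate index, I obtain
\begin{equation*}
\rho^{\anti,k}(\zvec) = G_k(z_1) - G_k(z_2) + 2h^{\anti,E_k}(Q_k),
\end{equation*}
where $G_k\colon X(\Qp)\to \Qp$ is the locally analytic function
\begin{align*}
G_k(z) \colonequals{}& 2\tau_{E_{3-k}}(\varphi_{3-k}(z)) - \tau_{E_k}(\varphi_k(z)+Q_k) - \tau_{E_k}(\varphi_k(z)-Q_k)\\
&- 2\alpha_{00}^{\anti,E_{3-k}}L_{E_{3-k}}(\varphi_{3-k}(z))^2 + 2\alpha_{00}^{\anti,E_k}L_{E_k}(\varphi_k(z))^2,
\end{align*}
with $\tau_E$ the Coleman function giving $h^{\anti,E}_{\fp_1}$ and $L_E$ the $p$-adic elliptic logarithm on $E$.

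The key point is that $G_k$ is invariant under the Klein four-group $H\subset \mathrm{Aut}(X)$ generated by $\theta_{\mathrm{hyp}}\colon(x,y)\mapsto(x,-y)$ and $\theta_{\mathrm{biel}}\colon(x,y)\mapsto(-x,y)$. A direct computation using \eqref{eq:ell_of_biell} shows $\varphi_\ell\circ\theta_{\mathrm{hyp}}=-\varphi_\ell$ for $\ell\in\{1,2\}$, $\varphi_1\circ\theta_{\mathrm{biel}}=\varphi_1$ and $\varphi_2\circ\theta_{\mathrm{biel}}=-\varphi_2$. Combined with the evenness $\tau_E(-R)=\tau_E(R)$, which follows from the quasi-parallelogram law \eqref{eq:quasi_par} by substituting $R\mapsto -R$ and using $x(-R)=x(R)$, and with the evenness $L_E(-R)^2=L_E(R)^2$, the check $G_k\circ\theta=G_k$ for $\theta$ a generator of $H$ reduces in every case to identities of the shape $\tau_{E_k}(-\varphi_k(z)\pm Q_k)=\tau_{E_k}(\varphi_k(z)\mp Q_k)$.

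With this in hand, the argument concludes as follows. The nontrivial automorphism $\theta\in H$ fixing $P$ is $\theta_{\mathrm{hyp}}$ when $y(P)=0$ and $\theta_{\mathrm{biel}}$ when $x(P)=0$; I take $t_j\colonequals y$ or $t_j\colonequals x$ at $P_j\colonequals \sigma_j(P)$ accordingly, so that $\theta$ acts on $t_j$ as $t_j\mapsto -t_j$. The $H$-invariance of $G_k$ then forces the Taylor expansion of $G_k$ at $P_j$ to contain only even powers of $t_j$. Moreover, the hypothesis $x(P)^2\in\Q$ (or the trivial $x(P)=0\in\Q$) implies that the Galois conjugate of $P$ lies in the $H$-orbit of $P$, so $P_2=\theta'(P_1)$ for some $\theta'\in H$, and the chosen uniformizers satisfy $t_2\circ\theta'=t_1$ on the residue disk of $P_1$. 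Expanding the equation $G_k\circ\theta'=G_k$ around $P_1$ shows both that $G_k(P_1)=G_k(P_2)$ and that the expansions of $G_k$ at $P_1$ and $P_2$ have the same coefficients $c_i$. Hence $w=2h^{\anti,E_k}(Q_k)$ and
\begin{equation*}
\rho^{\anti,k}(t_1,t_2)-w = \bigl(G_k(z_1)-G_k(P_1)\bigr) - \bigl(G_k(z_2)-G_k(P_2)\bigr) = \sum_{i\ge 1}c_i(t_1^{2i}-t_2^{2i}),
\end{equation*}
as claimed. The main care needed is in matching, for each configuration of $P$, the automorphism that fixes $P$ with the one that relates $P_1$ to $P_2$, and choosing a uniformizer on which both act compatibly; once this bookkeeping is set up, the $H$-symmetry of $G_k$ does the rest.
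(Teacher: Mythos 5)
Your proof is correct and rests on exactly the same ingredients as the paper's: the sign relation $\ha_{\fp_1}=-\ha_{\fp_2}$ from Example~\ref{ex:ht_anti}, the vanishing $\alpha_{01}^{\anti,E_j}=0$ and the antisymmetry $\alpha_{00}^{\anti,E_j}=-\alpha_{11}^{\anti,E_j}$ from \S\ref{sec:ell_imag}, the invariance of local heights under $R\mapsto -R$, the action of $\theta\in H$ on the chosen uniformizer, and the assumption $x(P)^2\in\Q$ to relate the expansions at $P_1$ and $P_2$. The only organizational difference is that you factor $\rho^{\anti,k}(\zvec)=G_k(z_1)-G_k(z_2)+2h^{\anti,E_k}(Q_k)$ for a single $H$-invariant function $G_k$ and then read off both the even-power structure (from $\theta$ fixing $P$) and the equality of the coefficient sequences at $P_1$ and $P_2$ (from $P_2=\theta'(P_1)$), whereas the paper works directly with the two-variable expansion $\rho^{\anti,k}(t_1,t_2)$, first establishing $\rho^{\anti,k}(t_1,t_2)=\rho^{\anti,k}(-t_1,-t_2)$ and separability $f_1(t_1^2)-f_2(t_2^2)+c$, and then proving $f_1=f_2$ via $x_1(t_1)^2=x_2(t_2)^2\vert_{t_2=t_1}$; the content is the same, phrased differently.

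One small gap in the bookkeeping: you explicitly treat the cases $y(P)=0$ (uniformizer $y$, fixed by the hyperelliptic involution) and $x(P)=0$ (uniformizer $x$, fixed by $(x,y)\mapsto(-x,y)$), but not $P=\infty^{\pm}$, which is fixed by the composite $(x,y)\mapsto(-x,-y)$ and does occur (it lies in $X^{(1)}(K)$ and is used, e.g., in Example~\ref{91_ex}). The paper handles it by taking $t_j=1/x_j$, on which that automorphism again acts by $t_j\mapsto -t_j$ and for which $P^c=P$; your argument applies verbatim with that choice, so it is only the explicit case enumeration that is incomplete, not the method.
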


\begin{proof}
Write a point in $X(\Q_p)\times X(\Q_p)$ as $((x_1,y_1),(x_2,y_2))$. We fix the following choices for $t_1$ and $t_2$. If $\theta(x,y)=(-x,y)$, let $t_i =  x_i$; if $\theta(x,y)=(x,-y)$, let $t_i =  y_i$; finally, if $P=\infty^{\pm}$, let $t_i = 1/x_i$. 
As observed in Remark \ref{rmk:auto_double_roots}, we have 
\[\rho^{\anti,k}((x_1,y_1),(x_2,y_2)) = \rho^{\anti,k}(\theta(x_1,y_1),\theta(x_2,y_2));\]
our choice of local coordinates then yields
\begin{equation*}
\rho^{\anti,k}(t_1,t_2) =  \rho^{\anti,k}(-t_1,-t_2).
\end{equation*}
Since by \S\ref{sec:ell_imag} we have $\alpha_{01}^{\anti, E_j}=0$ for each $j\in \{1,2\}$, this implies that there exist some power series $f_1$ and $f_2$ such that
\begin{equation*}
\rho^{\anti,k}(t_1,t_2) =  f_1(t_1^2)-f_2(t_2^2) + 2h_p^{\anti,E_k}(Q_k).
\end{equation*}
In view of our assumptions, we also have that $x_1(t_1)^2=x_2(t_2)^2|_{t_2 = t_1}$ and thus that $\varphi_j(x_1(t_1),y_1(t_1))=\pm \varphi_j(x_2(t_2),y_2(t_2))|_{t_2 = t_1}$. 
Since $\alpha_{00}^{\anti, E_j}=-\alpha_{11}^{\anti,E_j}$ by \S\ref{sec:ell_imag}, we conclude that $f_1=f_2$. 
Finally, the lemma follows by definition of $w$.
\end{proof}
\begin{remark}
Since $h_p^{\anti,E_k}(Q_k)=0$, we must have $w=0$.
\end{remark}

In the notation of Lemma \ref{lemma:double_roots}, the points $(z_1,z_2)\in X^{(k)}(K\otimes \Q_p)$ reducing to $\overline{\sigma_1(P)}\times \overline{\sigma_2(P)}$ and such that $\rho^{\anti,k}(z_1,z_2)=w$ satisfy one of the following:
\begin{enumerate}[label=(\roman*)]
\item\label{en:t1t2} $t_1(z_1)=t_2(z_2)$ and $\rho^{\cyc,k}(t_1(z_1),t_1(z_1))\in T^{k,\cyc}$;
\item\label{en:t1mt2} $t_1(z_1)=-t_2(z_2)$ and $\rho^{\cyc,k}(t_1(z_1),-t_1(z_1))\in T^{k,\cyc}$;
\item\label{en:remaining} 
$\begin{cases}
\sum_{i=1}^{\infty}c_i\left(\sum_{j=0}^{i-1}t_1(z_1)^{2j}t_2(z_2)^{2i-2-2j}\right)=0\\
\rho^{\cyc,k}(t_1(z_1),t_2(z_2))\in T^{k,\cyc}.
\end{cases}$
\end{enumerate}
If $c_1\neq 0$, then $(0,0)$ is not a root of \ref{en:remaining}, so in Algorithm \ref{alg} we can hope for no root to be appended in \ref{en:fourth_opt}\thinspace \ref{en:nothensel}.
We also note that the one-variable power series of \ref{en:t1t2} and \ref{en:t1mt2} are power series in $t_1^2$.

In the setting of \S \ref{sec:examples_bielliptic_real}, we follow a similar strategy (and adopt some of the notation of the imaginary case). Recall that the second function $\rho_2$ is given by
\begin{equation*}
\rho_2(z_1,z_2) =\int_{\infty}^{\varphi_2(z_1)}\omega_0^{E_2}-b\int_{\infty}^{\varphi_2(z_2)}\omega_0^{E_2},
\end{equation*}
where $b=1$ or $b=-1$, depending on the rank of $E_2$ over $\Q$. We assume here that $b=1$ as in Example \ref{Wetherell}. 
 Let $P\in X^{(k)}(K)$ be fixed by $\theta\in H\setminus\{\mathrm{id}\}$ and let $t_1$ and $t_2$ be local coordinates chosen in the same way as in the proof of Lemma \ref{lemma:double_roots}. Then, in the residue disk of $\sigma(P)$, we have
\begin{equation*}
\rho_2(-t_1,-t_2)= \begin{cases}
\rho_2(t_1,t_2) & \text{if } \varphi_2\circ\theta = \mathrm{id},\\
-\rho_2(t_1,t_2) & \text{if } \varphi_2\circ\theta = -\mathrm{id}.
\end{cases}
\end{equation*}
Assume for simplicity that $P\in X(\Q)$, so the local expansion of $\int_{\infty}^{\varphi_2(z_1)}\omega_0^{E_2}$ evaluated at $t_2$ gives the local expansion of $\int_{\infty}^{\varphi_2(z_2)}\omega_0^{E_2}$. We conclude that
\begin{equation*}
\rho_2(t_1,t_2)= \begin{cases}
(t_1^2-t_2^2)\tilde{\rho_2}(t_1,t_2) & \text{if } \varphi_2\circ\theta = \mathrm{id},\\
(t_1-t_2)\tilde{\rho_2}(t_1,t_2) & \text{if } \varphi_2\circ\theta = -\mathrm{id}
,\end{cases}
\end{equation*}
for some $\tilde{\rho_2}(t_1,t_2)\in \Q_p[[t_1,t_2]]$.

\bibliographystyle{amsalpha}
\bibliography{total}
\end{document}